\def\a{\alpha}       \def\b{\beta}        
            \def\om{\omega}
                  \def\z{\zeta}
\def\e{\varepsilon}
\def\D{{\mathbb D}}  \def\T{{\mathbb T}}
\def\C{{\mathbb C}}  \def\N{{\mathbb N}}
\def\Z{{\mathbb Z}}
\def\R{\mathbb R}
\def\({\left(}       \def\){\right)}
\newcommand{\DD}{\widehat{\mathcal{D}}}
\newcommand{\Dd}{\widecheck{\mathcal{D}}}
\newcommand{\DDD}{\mathcal{D}}
\def\omg{\widehat{\omega}}
\def\diam{\mathord{\rm diam}}
\renewcommand{\H}{\mathcal{H}}
\newtheorem{theorem}{Theorem}[section]
\newtheorem{lemma}[theorem]{Lemma}
\newtheorem{proposition}[theorem]{Proposition}
\newtheorem{lettertheorem}{Theorem}
\newtheorem{letterlemma}[lettertheorem]{Lemma}
\theoremstyle{definition}
\theoremstyle{remark}
\numberwithin{equation}{section}
\newenvironment{Prf}{\noindent{\emph{Proof of}}}
{\hfill$\Box$ }
\begin{document}

\title[Maximal theorems for weighted analytic tent and mixed norm spaces]{Maximal theorems for weighted analytic tent and mixed norm spaces}

\keywords{Bergman projection, doubling weight, Littlewood-Paley formula, maximal function, radial integrability space, radial weight, tent space.}

\author[T. Aguilar-Hern\'andez ]{Tanaus\'u Aguilar-Hern\'andez}
\address{Departamento de Matem\'atica Aplicada, Universidad de M\'alaga, Campus de Teatinos, 29071
	M\'alaga, Spain}
\email{taguilar@uma.es}

\author[A. Mas]{Alejandro Mas}
\address{Departamento de Análisis Matemático, Universidad de Valencia, 46100 Burjassot, Spain}\email{alejandro.mas@uv.es}

\author[J. A.  Pel\'aez]{Jos\'e \'Angel Pel\'aez}
\address{Departamento de An\'alisis Matem\'atico, Universidad de M\'alaga, Campus de
Teatinos, 29071 M\'alaga, Spain} \email{japelaez@uma.es}

\author[J. R\"atty\"a ]{Jouni R\"atty\"a}
\address{University of Eastern Finland\\
Department of Physics and Mathematics\\
P.O.Box 111\\FI-80101 Joensuu\\
Finland}
\email{jouni.rattya@uef.fi}

\thanks{The first author is supported by Ministerio de Ciencia e Innovaci\'on, Spain, project  project PID2022-136320NB-I00.
The rest of the  authors are supported in part by Ministerio de Ciencia e Innovaci\'on, Spain, project PID2022-136619NB-I00.
The last two authors  are supported in part by   Academy of Finland 356029 and 
La Junta de Andaluc{\'i}a, project FQM210.}

\subjclass{42B25, 30H99, 47G10}

\maketitle

\begin{abstract}
Let $\omega$ be a radial weight,  $0<p,q<\infty$ and $\Gamma(\xi)=\left\{z\in\D:|\arg z-\arg\xi|<(|\xi|-|z|)\right\}$ for $\xi\in\overline{\mathbb{D}}$ . The average radial integrability space $L^q_p(\omega)$ consists of complex-valued measurable functions $f$ on the unit disc $\mathbb{D}$ such that
		$$
		\|f\|^q_{L^q_p(\omega)}
		=\frac{1}{2\pi}
		\int_{0}^{2\pi}\left(\int_{0}^{1}|f(re^{i\theta})|^p\omega(r)r\,dr\right)^{\frac{q}{p}}d\theta <\infty,
		$$
and the tent space $T^q_p(\omega)$ is the set of those $f$ for which 
	$$
	\|f\|^q_{T_{p}^{q}(\omega)}
	=\frac{1}{2\pi}\int_{\partial{\mathbb{D}}}\left(\int_{\Gamma(\xi)}|f(z)|^p\omega(z)\frac{dA(z)}{1-|z|}\right)^{\frac{q}{p}}\,|d\xi|<\infty.
	$$

Let $\H(\D)$ denote the space of analytic functions in $\mathbb{D}$. It is shown that the non-tangential maximal operator 
	$$
	f\mapsto N(f)(\xi)=\sup_{z\in\Gamma(\xi)}|f(z)|,\quad \xi\in \D,
	$$
is bounded from $AL^q_p(\omega)=L^q_p(\omega)\cap\H(\D)$ and $AT^q_p(\omega)=T^q_p(\omega)\cap\H(\D)$ to $L^q_p(\omega)$ and $T^q_p(\omega)$, respectively. These pivotal inequalities are used to establish further results such as the density of polynomials in $AL^q_p(\omega)$ and $AT^q_p(\omega)$, and the identity $AL^q_p(\omega)=AT^q_p(\omega)$ for weights admitting a one-sided integral doubling condition. Further, it is shown that any of the Littlewood-Paley formulas
\begin{equation*} \begin{split}
 \|f\|_{AL_p^q(\omega)}&\asymp\|f^{(k)} (1-|\cdot|)^k\|_{L_p^q(\omega)}+\sum_{j=0}^{k-1}|f^{(j)}(0)|, \quad f\in \H(\D),\\
\|f\|_{AT_p^q(\omega)}&\asymp\|f^{(k)} (1-|\cdot|)^k\|_{T_p^q(\omega)}+\sum_{j=0}^{k-1}|f^{(j)}(0)|, \quad f\in \H(\D),
\end{split}\end{equation*}
holds if and only if $\omega$ admits a two-sided integral doubling condition. It is also shown that the boundedness of the classical Bergman projection $P_\gamma$, induced by the standard weight $(\gamma+1)(1-|z|^2)^{\gamma}$, on $L^q_p(\omega)$ and $T^q_p(\omega)$ with $1<q,p<\infty$ is independent of $q$, and
is described by a Bekoll\'e-Bonami type condition.
\end{abstract}

\section{Introduction}

Let $\H(\D)$ denote the space of analytic functions in the unit disc $\D=\{z\in\C:|z|<1\}$. A function $\omega:\D\to [0,\infty)$, integrable over $\D$, is called a weight. It is radial if $\omega(z)=\omega(|z|)$ for all $z\in\D$. The non-tangential approach region (cone), with vertex at $\xi\in\overline{\D}$ and of opening $0<M<\infty$, is the set 
	\begin{align*}
	\Gamma_M(\xi)&=\left\{z\in\D:|\arg z-\arg\xi|<M(|\xi|-|z|)\right\}.
	\end{align*}
For $0<p,q<\infty$ and a radial weight $\omega$, the tent space $T_p^q(\omega)$ consists of complex-valued measurable functions $f$ on $\D$ such that 
	\begin{align}\label{TentA}
	\|f\|_{T_{p}^{q}(\omega)}
	=  \left(\int_{\T}\left(\int_{\Gamma(\xi)}|f(z)|^p\omega(z)\frac{dA(z)}{1-|z|}\right)^{\frac{q}{p}}\,|d\xi|\right)^{\frac1q}<\infty,
	\end{align}
where $\T=\partial{\D}$, $\Gamma(\xi)=\Gamma_1(\xi)$ and $dA(z)=\frac{dx\,dy}{\pi}$ is the normalized Lebesgue area measure on $\D$. The analytic tent space $AT_{p}^{q}(\omega)$ is $T_{p}^{q}(\omega)\cap\H(\D)$. If $\om$ is the standard weight $(\alpha+1)(1-|z|^2)^\alpha$, then we simply write $T_{p}^{q}(\omega)=T_{p}^{q}(\alpha)$ and $AT_{p}^{q}(\omega)=AT_{p}^{q}(\alpha)$. 

The concept of tent spaces were introduced by Coifman, Meyer and Stein \cite{CMS}, and these spaces have become a primordial tool in 
the operator and function theory on spaces of analytic functions. In fact, different versions of weighted tent spaces of analytic  
functions have been considered by several authors during the last decades, and they are naturally linked in several ways with classical function spaces \cite{AGJMAA2022,CV,Lu90,OFJFA1997,PelRatMathAnn,Perala2018}. One of the most used facts of this fashion is the Calderon's area theorem which asserts that $f$ belongs to the Hardy space $H^q$ if and only if $\Delta|f|^p\in T^{\frac{q}{p}}_1(1)$ \cite[Theorem~7.4]{Pabook2}. 

For $0<p,q<\infty$ and a radial weight $\omega$, the average radial integrability  space $L^q_p(\om)$ consists of complex-valued measurable functions $f$ on $\D$ such that
		\begin{equation*}
		\begin{split}
		\|f\|_{L^q_p(\om)}
		&=\left(\frac{1}{2\pi}
		\int_{0}^{2\pi}\left(\int_{0}^{1}|f(re^{i\theta})|^p\omega(r)r\,dr\right)^{\frac{q}{p}}d\theta\right)^{\frac1q}<\infty,
		\end{split}
		\end{equation*}
and 
	$$
	AL^q_p(\om)=L^q_p(\om)\cap\H(\D).
	$$
Obviously, $AT^p_p(\om)=AL^p_p(\om)$ coincides with the Bergman space $A^p_\om$ induced by $p$ and $\omega$. Throughout this paper we assume that the tail integral $\widehat{\om}(z)=\int_{|z|}^1\om(s)\,ds$
is strictly positive for all $z\in\D$, otherwise $A^p_\om=\H(\D)$.
As in the case of the tent spaces, we write $L^q_p(\alpha)$ and $AL^q_p(\alpha)$ when the inducing weight is standard. The connection between the analytic Triebel-Lizorkin spaces and the average radial integrability space $AL^q_p(\alpha)$ is known \cite{OFJFA1997}, and so is the identity $AL^q_p(\alpha)=AT^q_p(\alpha)$ \cite[p. 179]{OFJFA1997}, see also \cite[Proposition~3.1]{AGJMAA2022}. Moreover, an extensive study of essential properties of the space $AL^q_p(0)$ has been recently done in \cite{ACPMED2021,ACPJFA2022,AGJMAA2022}. However, to the best of our knowledge, the existing literature does not offer results
concerning fundamental properties and interrelationships of the spaces $AL^q_p(\om)$ and $AT^q_p(\om)$ induced by a general radial weight $\omega$.One of  the main aims of this study is to fill this gap. 
 
The first cornerstone within this weighted theory consists of proving the boundedness of the non-tangential maximal operator 
	$$
	f\mapsto N_M(f)(\xi)=\sup_{z\in\Gamma_M(\xi)}|f(z)|,\quad \xi\in\overline{\D},
	$$
from $AL^q_p(\omega)$ and $AT^q_p(\omega)$ to $L^q_p(\omega)$ and $T^q_p(\omega)$, respectively. As far as we know, the result given in Theorem~\ref{th:mainnotangencial} is new even for the standard weights.

\begin{theorem}\label{th:mainnotangencial}
Let $0<p,q,M<\infty$ and let $\omega$ be a radial weight. Then $N_M:AL^q_p(\omega)\to L^q_p(\omega)$ and $N_M:AT^q_p(\omega)\to T^q_p(\omega)$ are bounded.
\end{theorem}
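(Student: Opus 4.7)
The plan is to establish a pointwise, circle-wise estimate for $N_M f$ by transferring the problem to the classical non-tangential maximal operator on $\T$ via a dilation, and then to assemble the mixed-norm bound by integration against $\omega$.

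\emph{Step 1 (Scaling reduction to boundary cones).} For each $R\in(0,1]$ and $\theta\in\T$, introduce the dilation $g_R(z)=f(Rz)$, so that $g_R\in \H(\D)\cap C(\overline{\D})$. The elementary cone computation
\[
z\in \Gamma_M(Re^{i\theta}) \iff z/R \in \Gamma_{MR}(e^{i\theta})\subset \Gamma_M(e^{i\theta})
\]
(the last inclusion uses $R\leq 1$) immediately produces
\[
N_Mf(Re^{i\theta})\leq N_M g_R(e^{i\theta}),\qquad R\in(0,1],\ \theta\in\T.
\]

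\emph{Step 2 (Circle-wise $H^s$ maximal bound).} Since $g_R$ is continuous on $\overline{\D}$, it lies in $H^s$ for every $s>0$ with $\|g_R\|_{H^s}^s=\int_\T |f(Re^{i\theta})|^s\,d\theta/(2\pi)$. The classical Hardy--Littlewood non-tangential maximal theorem for Hardy spaces then yields
\[
\int_\T N_M g_R(e^{i\theta})^s\,d\theta \leq C_{s,M}\int_\T |f(Re^{i\theta})|^s\,d\theta,\qquad s>0.
\]
Via Step~1 this gives $\|N_Mf(R\,\cdot)\|_{L^s(\T)}\leq C_{s,M}\|f(R\,\cdot)\|_{L^s(\T)}$ for every $R\in(0,1)$ and every $s>0$. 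A companion pointwise bound, obtained from the Poisson representation of the subharmonic function $|f|^p$ on the disc of radius $R$ together with the standard estimate of the non-tangential maximal function of a Poisson integral by the Hardy--Littlewood maximal operator $\mathcal M_{HL}$ on $\T$, reads
\[
N_Mf(Re^{i\theta})^p\leq C_M\,\mathcal M_{HL}(|f(R\,\cdot)|^p)(\theta).
\]

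\emph{Step 3 (Mixed-norm assembly for $L^q_p(\omega)$).} View the norm as $L^q_\theta$ of $L^p_R(\omega(R)R\,dR)$. When $q\leq p$, reverse Minkowski compares $\|N_Mf\|_{L^q_p(\omega)}^p$ with $\int_0^1 \|N_Mf(R\,\cdot)\|_{L^p(\T)}^p\,\omega(R)R\,dR$, which Step~2 with $s=p$ controls by the analogous quantity for $f$, itself dominated by $\|f\|_{L^q_p(\omega)}^p$ via the same reverse Minkowski. When $q>p$, the circle-wise $L^p$-bound is inserted into the weighted mixed-norm, and the pointwise estimate $N_Mf(Re^{i\theta})^p\leq C\,\mathcal M_{HL}(|f(R\,\cdot)|^p)(\theta)$ is combined with a vector-valued Fefferman--Stein maximal inequality in $L^{q/p}_\theta(L^1_R(\omega(R)R\,dR))$ to absorb the $\mathcal M_{HL}$ on the $\theta$-variable.

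\emph{Step 4 (Tent-space bound).} The key geometric observation is that if $z\in \Gamma(\xi)$ with $\xi\in\T$, then $\Gamma_M(z)\subset \Gamma_{\max(M,1)}(\xi)$. Fubini applied to $\int_{\Gamma(\xi)}N_Mf(z)^p\,\omega(z)\,\tfrac{dA(z)}{1-|z|}$ converts the sup hidden in $N_Mf$ into a weighted integral of $|f|^p$ over the enlarged cone $\Gamma_{\max(M,1)}(\xi)$; the circle-wise bound from Step~2 and an $L^{q/p}$-integration in $\xi$ then complete the estimate, paralleling the $L^q_p$-argument of Step~3.

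\emph{Main obstacle.} The most delicate case is $q>p$ in the $L^q_p(\omega)$ part, where naive Minkowski-type exchanges between the $L^p_R$- and $L^q_\theta$-norms go in the wrong direction, so that the circle-wise bounds of Step~2 alone do not close the estimate. Overcoming this requires a genuinely two-dimensional maximal-type inequality (Fefferman--Stein vector-valued form, or an equivalent direct subharmonic argument on pseudohyperbolic disks contained in enlarged cones) that is compatible with the radial weight $\omega$ and with the mixed-norm structure.
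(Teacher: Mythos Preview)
Your Step~3 does not close in either regime. For $q\le p$ you use Jensen/reverse Minkowski to pass from $\|N_Mf\|_{L^q_p(\omega)}^p$ to $\int_0^1\|N_Mf(R\,\cdot)\|_{L^p(\T)}^p\,\omega(R)R\,dR$, apply the circle-wise bound, and then claim the result is dominated by $\|f\|_{L^q_p(\omega)}^p$ ``via the same reverse Minkowski''. But the second application would require $\|H\|_{L^1(\T)}\lesssim\|H\|_{L^{q/p}(\T)}$ for $H(\theta)=\int_0^1|f(Re^{i\theta})|^p\omega(R)R\,dR$, and on a probability space the inequality goes the \emph{wrong} way when $q/p\le1$. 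For $q>p$ you invoke a vector-valued Fefferman--Stein inequality for $\mathcal M_{HL}$ on $L^{q/p}_\theta(L^1_R(\omega R\,dR))$; this is precisely the $\ell^1$ endpoint of Fefferman--Stein, which is \emph{false} for general measurable functions. Your ``Main obstacle'' paragraph correctly identifies that something beyond classical Fefferman--Stein is needed, but the proposal does not supply it. The paper's proof uses exactly the missing ingredient: a Fefferman--Stein type inequality $\int_\T\bigl(\sum_j N_M(h_j)\bigr)^s\lesssim\int_\T\bigl(\sum_j h_j\bigr)^s$ valid for \emph{all} $0<s<\infty$ when the $h_j$ are log-subharmonic and continuous on $\overline\D$ (Pavlovi\'c, Theorem~7.2). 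The analyticity of $f$ makes the approximating pieces $|f(c_j\,\cdot)|^p$ log-subharmonic, which is what rescues the $\ell^1$ inner exponent.

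Step~4 is not a proof as written: one cannot ``convert the sup hidden in $N_Mf$ into a weighted integral'' by Fubini, since $N_Mf(z)$ is a supremum, not an integral. The geometric inclusion $\Gamma_M(z)\subset\Gamma_{\max(M,1)}(\xi)$ for $z\in\Gamma(\xi)$ is correct but only yields $N_Mf(z)\le N_{\max(M,1)}f(\xi)$, a bound independent of $z$ that gives nothing useful after integrating over $\Gamma(\xi)$. The paper handles $AT^q_p(\omega)$ by a genuinely different geometric decomposition: it partitions $\Gamma_M(\xi)$ into finitely many sets $E^n_{j,k}(\xi)$ of small diameter, each contained in a cone $\Gamma_M(\xi^n_{j,k})$ with vertex $\xi^n_{j,k}\in\overline{\Gamma_M(\xi)}$, writes the tent integral as a Riemann-type sum of log-subharmonic functions $h^n_{j,k}(\zeta)=|f(s^n_{j,k}\zeta)|^p\cdot(\text{weight})$, and again applies Pavlovi\'c's log-subharmonic maximal theorem.
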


The boundedness of $N_M:AT^q_p(\om)\to T^q_p(\om)$ is proved by using a version of the Fefferman-Stein vectorial maximal theorem, valid for all log-subharmonic functions and the optimal range of parameters~\cite[p.212]{Pabook2}, together with a covering of $\Gamma_M(\xi)$ induced by appropriate unions and intersections of cones with vertexes depending on $\xi$. See Section~\ref{S2} for details and a real scale illustration. The proof for $AL^q_p(\om)$ follows the same guideline but it does not involve that much geometric arguments. 

We next present several natural applications of Theorem~\ref{th:mainnotangencial}. The first one shows that each $f\in AL^q_p(\om)$ can be approximated by its dilated functions $f_\lambda(z)=f(\lambda z)$ as $\lambda\to 1^-$, and consequently, the polynomials are dense in  $AL^q_p(\om)$. The same is true for the analytic tent spaces.

\begin{theorem}\label{th:RMaprprox}
Let $0<p,q<\infty$ and let $\omega$ be a radial weight. Let $AX\in\{AL^q_p(\om),AT^q_p(\om)\}$. Then the following statements hold: 
	\begin{enumerate}
	\item[\rm(i)]
	There exists a constant $C=C(p,q)>0$ such that
	\begin{equation*}
  \|f_\lambda\|_{AX}\le C\|f\|_{AX},\quad \lambda\in\overline{\D},\quad f\in \H(\D);
	\end{equation*}
	\item[\rm(ii)] $\displaystyle\lim_{\lambda\to \zeta, \lambda\in \overline{\D}}\|f_\lambda-f_\zeta\|_{AX}=0$	for each $f\in AX$ and $\zeta\in\overline{\D}$;
	\item[\rm(iii)] Polynomials are dense in $AX(\om)$.
	\end{enumerate}
\end{theorem}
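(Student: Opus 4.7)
My plan is to deduce (i) directly from Theorem~\ref{th:mainnotangencial}, extract (ii) from (i) via a truncation argument, and derive (iii) from (ii). \textbf{For (i),} I write $\lambda=|\lambda|e^{i\phi}$ and $z=re^{i\theta}$. When $|\lambda|=1$, a change of variables relying on the radiality of $\omega$ and the rotational covariance $e^{i\phi}\Gamma(\xi)=\Gamma(e^{i\phi}\xi)$ gives $\|f_\lambda\|_{AX}=\|f\|_{AX}$. When $|\lambda|<1$, the point $\lambda z$ lies in $\Gamma_M(re^{i(\theta+\phi)})$ for every $M>0$ (angular gap zero, radial gap $r(1-|\lambda|)>0$), so
\begin{equation*}
|f_\lambda(re^{i\theta})|\le N_M(f)\bigl(re^{i(\theta+\phi)}\bigr);
\end{equation*}
integrating this bound and performing the rotation $\theta\mapsto\theta+\phi$ (respectively $\xi\mapsto e^{i\phi}\xi$ in the tent case) reduces the right-hand side to $\|N_M(f)\|_X$, which is dominated by $\|f\|_{AX}$ through Theorem~\ref{th:mainnotangencial}.

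\textbf{For (ii),} when $\zeta\in\D$ the values $\lambda z,\zeta z$ eventually lie in a fixed compact subset of $\D$ on which $f$ is uniformly continuous, so $f_\lambda\to f_\zeta$ uniformly on $\overline{\D}$; combined with the elementary inequality $\|g\|_{AX}\le\|g\|_\infty\|1\|_{AX}$ (finite because $\omega$ is integrable), this closes that case. For $\zeta\in\T$ a rotation reduces matters to $\zeta=1$. Given $\varepsilon>0$ and $R\in(0,1)$, I split
\begin{equation*}
\|f_\lambda-f\|_{AX}\le \|(f_\lambda-f)\chi_{\{|z|<R\}}\|_X + \|f_\lambda\chi_{\{|z|\ge R\}}\|_X+\|f\chi_{\{|z|\ge R\}}\|_X.
\end{equation*}
The first summand tends to zero as $\lambda\to 1$ by uniform continuity of $f$ on $\{|w|\le R\}$; the third is smaller than $\varepsilon$ for $R$ close enough to $1$ by dominated convergence on the finite quantity $\|f\|_{AX}^q$; and in the middle summand, the pointwise bound of (i) followed by the same rotational change of variable converts it into the $\lambda$-independent tail $\|N_M(f)\chi_{\{|z|\ge R\}}\|_X$, which is again small because $N_M(f)\in X$ by Theorem~\ref{th:mainnotangencial}.

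\textbf{For (iii),} given $f\in AX$ and $\varepsilon>0$, I use (ii) at $\zeta=1$ to pick $s\in(0,1)$ with $\|f-f_s\|_{AX}<\varepsilon/2$. The dilate $f_s$ extends analytically to the disc of radius $1/s>1$, so its Taylor polynomials converge to $f_s$ uniformly on $\overline{\D}$, and the bound $\|\cdot\|_{AX}\le\|\cdot\|_\infty\|1\|_{AX}$ promotes this to convergence in $AX$.

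The main difficulty lies in the boundary case $\zeta\in\T$ of (ii): a naive dominated convergence in the integrals defining $X$ would require a single $\lambda$-independent majorant of $|f_\lambda|$, while the natural pointwise bound of (i) carries the angular shift $\arg\lambda$. The truncation-plus-rotation trick outlined above bypasses this obstacle by combining the $\lambda$-wise bound with the rotational invariance of the $X$-norm, so that only the $\lambda$-free tail of $N_M(f)$ survives, whose smallness is controlled by Theorem~\ref{th:mainnotangencial}.
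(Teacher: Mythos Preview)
Your proof is correct and follows the same overall strategy as the paper: Theorem~\ref{th:mainnotangencial} for (i), a compact-core/small-tail split for (ii), and Taylor approximation of a dilate for (iii). The one technical difference is in the tail control of (ii): the paper re-enters the proof of Theorem~\ref{th:radialmaximal} to obtain an annulus-localized maximal inequality showing that $\int_0^{2\pi}(\int_{r_0}^1 R(f)^p\omega\,r\,dr)^{q/p}d\theta$ is dominated by the corresponding tail of $f$ itself, whereas you bound the dilated tail $\|f_\lambda\chi_{\{|z|\ge R\}}\|_X$ by $\|N_M(f)\chi_{\{|z|\ge R\}}\|_X$ via rotation and then make this small by dominated convergence applied to the finite quantity $\|N_M(f)\|_X^q$. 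Your route has the mild advantage of using Theorem~\ref{th:mainnotangencial} purely as a black box rather than revisiting its proof.
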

The special case  $AL^q_p(0)$ with $1\le p,q<\infty$ of Theorem~\ref{th:RMaprprox} has been recently proved in \cite[section~2]{ACPJFA2022} with different methods which do not seem to carry over to the general case.

The second application of Theorem~\ref{th:RMaprprox} concerns the  useful identity 
	\begin{equation}\label{eq:intro1}
	AT_{p}^{q}(\alpha)=AL_{p}^{q}(\alpha),
	\end{equation}
proved in \cite{OFJFA1997}, see also \cite[Proposition~3.1]{AGJMAA2022}. To pull this identity to more general setting some more definitions are in order.
A radial weight $\om$ belongs to $\DD$ if there exists $C=C(\om)>0$ such that 
	$$
	\omg(r)\le C\omg\left(\frac{1+r}{2}\right),\quad r\to1^-,
	$$
and $\om\in\Dd$ if
	$$
	\omg(r)\ge C\omg\left(1-\frac{1-r}{K}\right),\quad r\to1^-,
	$$ 
for some   $K=K(\om)>1$ and $C=C(\om)>1$.
Write $\mathcal{D}=\DD\cap\Dd$ for short, and simply say that $\omega$ is a radial doubling weight if $\om\in\DDD$. It is known that the doubling classes $\DD$ and $\DDD$ arise naturally in the operator theory related to the weighted Bergman spaces~\cite{PR19}, and hence their appearance here does not come as a surprise. Each standard radial weight obviously belongs to $\DDD$, while $\Dd\setminus\DDD$ contains exponential type weights such as
	$
	\om(r)=\exp \left(-\frac{\a}{(1-r^l)^{\b}} \right),\, \text{where}\, 0<\alpha,l,\beta<\infty.
	$
The class of rapidly increasing weights, introduced in \cite{PelRat}, lies entirely within $\DD\setminus\DDD$, and a typical example of such a weight is 
	$
	\om(z)=(1-|z|^2)^{-1}\left(\log\frac{e}{1-|z|^2}\right)^{-\alpha}$ with  $1<\alpha<\infty.
	$
To this end we emphasize that the containment in $\DD$ or $\Dd$ does not require differentiability, continuity or strict positivity. In fact, weights in these classes may vanish on a relatively large part of each outer annulus $\{z:r\le|z|<1\}$ of~$\D$. For basic properties of the aforementioned classes, concrete nontrivial examples and more, see \cite{PelSum14,PelRat,PR19} and the relevant references therein.

\begin{theorem}\label{th:samespaceintro 1}
Let $0<p,q<\infty$ and $\om\in\DD$. Then $AL^q_p(\om)=AT^q_p(\om)$ with equivalence of quasinorms.
\end{theorem}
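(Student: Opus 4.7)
The plan is to establish both inclusions $AL_p^q(\om)\subseteq AT_p^q(\om)$ and $AT_p^q(\om)\subseteq AL_p^q(\om)$ separately with equivalent quasinorms, working for $f\in\H(\D)$. For the forward inclusion, I would apply Fubini in polar coordinates to the tent integral, writing, for $\xi=e^{i\theta}\in\T$,
\begin{equation*}
\int_{\Gamma(\xi)}|f(z)|^p\om(z)\frac{dA(z)}{1-|z|}=\frac{1}{\pi}\int_0^1\frac{\om(r)r}{1-r}\int_{|\phi-\theta|<1-r}|f(re^{i\phi})|^p\,d\phi\,dr.
\end{equation*}
The angular integral divided by $2(1-r)$ is bounded pointwise by $M_{\T}(|f(r\cdot)|^p)(\theta)$, where $M_{\T}$ denotes the Hardy--Littlewood maximal operator on $\T$ and $f(r\cdot)(e^{i\phi}):=f(re^{i\phi})$. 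Since $|f(r\cdot)|^p$ is log-subharmonic in $\phi$ (as $f$ is analytic), the vectorial Fefferman--Stein inequality for log-subharmonic families---the same tool used in the proof of Theorem~\ref{th:mainnotangencial} and valid for all $0<p,q<\infty$---allows one to exchange $M_{\T}(|f(r\cdot)|^p)(\theta)$ for $|f(re^{i\theta})|^p$ inside the outer $L^{q/p}(\T)$-norm, thereby yielding $\|f\|_{T_p^q(\om)}\le C\|f\|_{L_p^q(\om)}$.

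For the reverse inclusion, I would combine Theorem~\ref{th:mainnotangencial} with the trivial bound $|f(z)|\le N(f)(z)$ (valid by continuity, since $z$ lies on the boundary of its own cone $\Gamma(z)$) to conclude $\|f\|_X\asymp\|N(f)\|_X$ for $X\in\{L_p^q(\om),T_p^q(\om)\}$. The problem thus reduces to proving $\|N(f)\|_{L_p^q(\om)}\asymp\|N(f)\|_{T_p^q(\om)}$. For the positive function $g:=N(f)$, which is non-decreasing in $|\xi|$ and satisfies $g(z)\le g(\xi)$ whenever $z\in\Gamma(\xi)$, a dyadic Carleson-box decomposition of $\D$ together with the identity $\int_{S(a)}\om\,dA\asymp(1-|a|)\widehat\om(|a|)$ for Carleson squares, available for $\om\in\widehat{\mathcal{D}}$, would express both norms as equivalent discrete $\ell^{q/p}$-sums indexed by Carleson boxes.

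The main obstacle is this last equivalence $\|N(f)\|_{L_p^q(\om)}\asymp\|N(f)\|_{T_p^q(\om)}$: a direct subharmonicity attack on $|f|^p$ fails in the full generality of $\om\in\widehat{\mathcal{D}}$ because the pointwise value $\om(r)$ need not be comparable to the tail average $\widehat\om(r)/(1-r)$ (the weight may oscillate wildly or vanish on large sets). Routing the argument through the non-tangential maximal function $N(f)$, which by construction depends on $f$ only through sup-values over cones, replaces the pointwise weight by the integrated tail $\widehat\om$, and then the defining doubling property of $\widehat{\mathcal{D}}$ enables the final Carleson decomposition.
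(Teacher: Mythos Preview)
Your forward-inclusion argument has a genuine gap. The maximal theorem invoked in the proof of Theorem~\ref{th:mainnotangencial} (Pavlovi\'c's vectorial inequality for log-subharmonic families) controls the \emph{non-tangential} maximal operator $N_M$ acting on log-subharmonic functions on $\D$, not the Hardy--Littlewood operator $M_\T$ acting on their boundary traces. For $h_r(z)=|f(rz)|^p$ the quantity $N_M(h_r)(e^{i\theta})$ only sees values of $f$ at points $rz$ with $z\in\Gamma_M(e^{i\theta})$, hence strictly inside $|w|<r$, whereas $M_\T(h_r|_\T)(\theta)$ averages $|f(re^{i\phi})|^p$ over arbitrary arcs of $\T$; the latter can be arbitrarily larger than the former, so Pavlovi\'c's inequality does not transfer. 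A further warning sign is that your argument never uses $\om\in\DD$. The paper instead exploits the geometric inclusion $\Gamma(\xi)\cap D(0,r)\subset\Gamma_2\bigl(\tfrac{1+r}{2}\xi\bigr)$ to bound the tent integral by $\int_0^1 N_2(f)^p\bigl(\tfrac{1+r}{2}e^{i\theta}\bigr)\om(r)\,dr$, and then uses $\om\in\DD$ (via $\widehat\om(2\rho-1)\lesssim\widehat\om(\rho)$ and the integration-by-parts trick \eqref{trivial} for non-decreasing integrands) to absorb the radial shift before applying Theorem~\ref{th:radialmaximal}.

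The reverse direction is also incomplete. Routing through $N(f)$ via Theorem~\ref{th:mainnotangencial} is correct, but the claimed Carleson-box discretization does not work as stated: $g=N_1(f)$ is \emph{not} approximately constant on Carleson boxes. Indeed, for $z=re^{i\phi}$ with $|\phi-\theta|<1-r$ the cone $\Gamma_1(z)$ is not contained in any $\Gamma_M(re^{i\theta})$ with $M$ independent of $r,\phi$ (the obstruction is at points of $\Gamma_1(z)$ with modulus close to $r$), so $N_1(f)(z)$ and $N_1(f)(re^{i\theta})$ are not uniformly comparable, and the $T^q_p$-norm of $N(f)$ does not reduce to the same discrete sum as its $L^q_p$-norm. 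The paper's device is to introduce the arc-averaged quantity $\Phi_{f,1,p}(se^{i\theta})=\frac{1}{1-s}\int_{|t-\theta|<1-s}N_1(f)^p(se^{it})\,dt$, observe that it is radially non-decreasing (this is the key monotonicity that $N(f)$ itself lacks in the angular variable), bound $|f(re^{i\theta})|^p\le\Phi_{f,1,p}\bigl(\tfrac{1+r}{2}e^{i\theta}\bigr)$, and again use $\om\in\DD$ with \eqref{trivial} to undo the shift, arriving at $\|N(f)\|_{T^q_p(\om)}$ which Theorem~\ref{th:tentmaximal} controls.
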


 Theorem~\ref{th:samespaceintro 1} is sharp in the sense that if $\omega$ belongs to the class  $\mathcal{W}$ of rapidly decreasing weights, defined in Section~\ref{S2}, then the statement is in general false.
The class $\mathcal{W}$ is a large set of smooth weights and it has been widely studied \cite{Atesis,ATTY,Pau-Pelaez:JFA2010}. It contains, for example, the (iterated) exponential type weights among many others.

\begin{theorem}\label{th:counterexaple}
Let $\mu \in \mathcal{W}$. Then there exist $0<p,q<\infty$ such that $AT^q_p(\mu)\neq AL^q_p(\mu)$.
\end{theorem}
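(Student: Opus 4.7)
The plan is to exhibit a sequence of analytic test functions whose $AL^q_p(\mu)$- and $AT^q_p(\mu)$-quasinorms grow at incompatible rates for suitable $p<q$, thereby ruling out any equivalence of quasinorms. The natural candidates are the normalized Bergman reproducing kernels $k_z^\mu=K_z^\mu/\|K_z^\mu\|_{A^2_\mu}$: for $\mu\in\mathcal{W}$, these are sharply localized on a Bergman disc $D(z,\tau(|z|))$ whose radius is much smaller than the opening of the tent cones at height $|z|$. Concretely, the theory developed in \cite{Atesis,ATTY,Pau-Pelaez:JFA2010} attaches to each $\mu\in\mathcal{W}$ a radial scale $\tau=\tau_\mu$ with $\tau(r)\to 0$ and, crucially for us, $(1-r)/\tau(r)\to\infty$ as $r\to 1^-$, together with sharp estimates $\|K_z^\mu\|^2_{A^2_\mu}\asymp\mu(z)^{-1}\tau(|z|)^{-2}$, the pointwise size $|K_z^\mu(w)|\asymp \mu(z)^{-1}\tau(|z|)^{-2}$ on $D(z,\tau(|z|))$, and fast off-diagonal decay. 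Accordingly, $|k_z^\mu(w)|\asymp \mu(z)^{-1/2}\tau(|z|)^{-1}$ on $D(z,\tau(|z|))$, with the analogous rapid decay outside.

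Fix $q>p>0$, pick $z_n\in\D$ with $|z_n|\to 1^-$, and write $\tau_n=\tau(|z_n|)$. Using the localization of $|k_{z_n}^\mu|^p\mu$ on $D(z_n,\tau_n)$ and slicing by rays, only angles $\theta$ within $\tau_n/|z_n|$ of $\arg z_n$ contribute, each such radial integral being $\asymp \mu(z_n)^{1-p/2}\tau_n^{1-p}$, which gives
\[
\|k_{z_n}^\mu\|^q_{AL^q_p(\mu)}\asymp \tau_n\bigl(\mu(z_n)^{1-p/2}\tau_n^{1-p}\bigr)^{q/p}.
\]
For the tent norm, the cone $\Gamma(\xi)$ at height $|z_n|$ has angular width $2(1-|z_n|)\gg\tau_n$, so $D(z_n,\tau_n)\subset\Gamma(\xi)$ exactly for $\xi$ in an arc of length $\asymp 1-|z_n|$ around $\arg z_n$ (and the inner integral is negligible otherwise), while $1-|w|\asymp 1-|z_n|$ on $D(z_n,\tau_n)$. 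This yields
\[
\|k_{z_n}^\mu\|^q_{AT^q_p(\mu)}\asymp (1-|z_n|)^{1-q/p}\bigl(\mu(z_n)^{1-p/2}\tau_n^{2-p}\bigr)^{q/p}.
\]
The $\mu(z_n)$-factors cancel in the quotient and the $\tau_n$-exponents collapse, leaving
\[
\frac{\|k_{z_n}^\mu\|^q_{AL^q_p(\mu)}}{\|k_{z_n}^\mu\|^q_{AT^q_p(\mu)}}\asymp\Bigl(\frac{1-|z_n|}{\tau_n}\Bigr)^{q/p-1}\longrightarrow\infty.
\]
Since both $AL^q_p(\mu)$ and $AT^q_p(\mu)$ are complete quasi-Banach spaces, equality of their underlying sets would force, via the closed graph theorem, equivalence of the two quasinorms on $\H(\D)$; the divergence above rules that out, proving $AT^q_p(\mu)\neq AL^q_p(\mu)$ for every $q>p$.

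The main obstacle is securing the matching two-sided estimates above, specifically the \emph{lower} bound on $\|k_{z_n}^\mu\|^q_{AL^q_p(\mu)}$ and the \emph{upper} bound on $\|k_{z_n}^\mu\|^q_{AT^q_p(\mu)}$: one must verify that contributions from outside the Bergman disc $D(z_n,\tau_n)$ (and, in the tent case, from $\xi$ outside the good arc) do not spoil the localized leading term. This is precisely the content of the $\overline{\partial}$-machinery for rapidly decreasing weights; with those estimates in hand, the rest of the argument is a dimensional count that leverages the key mismatch $\tau_\mu(r)\ll 1-r$ between the intrinsic scale of $\mu$ and the boundary scale governing the tent geometry.
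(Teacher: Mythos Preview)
Your strategy is sound and the dimensional count is correct, but the proof is incomplete at exactly the point you flag: the \emph{upper} bound
\[
\|k_{z_n}^\mu\|^q_{AT^q_p(\mu)}\lesssim (1-|z_n|)^{1-q/p}\bigl(\mu(z_n)^{1-p/2}\tau_n^{2-p}\bigr)^{q/p}
\]
is asserted, not proved. Saying it ``is precisely the content of the $\overline\partial$-machinery'' is not enough: to control the full tent norm you must bound $\int_{\Gamma(\xi)}|k_{z_n}^\mu|^p\mu(1-|\cdot|)^{-1}\,dA$ for \emph{every} $\xi\in\T$, including those whose cone misses or only grazes $D(z_n,\tau_n)$, and then integrate the $(q/p)$-th power over $\T$. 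This requires quantitative global off-diagonal decay of $K_z^\mu$ (of Lindholm type) together with a tail estimate showing the contribution from outside $D(z_n,\tau_n)$ does not dominate the localized term after the weighted cone integration. That computation is genuinely nontrivial and is simply absent from your argument.

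The paper sidesteps this difficulty altogether. It fixes $p=1$, $q=2$, introduces the auxiliary weight $\omega(r)=e^{-Cr^2}\mu(r)(1-r^2)^{-1/2}\in\mathcal W$, and compares two embeddings of $A^1_\omega$. Boundedness of $A^1_\omega\hookrightarrow AT^2_1(\mu)$ is obtained by invoking an existing Carleson-measure criterion for tent spaces over large Bergman spaces \cite{ATTY}, so no direct upper tent-norm estimate for a specific function is ever needed. Unboundedness of $A^1_\omega\hookrightarrow AL^2_1(\mu)$ is shown via the test functions $f_a=(B_a^\omega/\|B_a^\omega\|_{A^2_\omega})^2$, for which only the \emph{lower} bound $\|f_a\|^2_{AL^2_1(\mu)}\gtrsim(1-|a|)/\tau(a)$ is required; this follows from the on-disc kernel comparability (Lemma~\ref{th:W}(ii)) alone. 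Thus the paper reduces everything to the easy direction (on-disc lower bounds) and outsources the hard upper bound to a known embedding theorem. Your approach can be completed, but it is more work than the paper's; if you pursue it, the missing ingredient is a pointwise estimate of the form $|K_z^\mu(w)|\lesssim(\mu(z)\mu(w))^{-1/2}(\tau(z)\tau(w))^{-1}e^{-c\,d_\tau(z,w)}$ and the subsequent tail analysis.
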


It is  worth underlining here that the identity $AL^q_p(\om)=AT^q_p(\om)$ for $\om\in\DD$   is a particular phenomenon of analytic radial integrability and tent spaces that does not remain true for the corresponding spaces of measurable functions as the following result shows.

\begin{proposition}\label{th:intro1}
Let $0<p,q<\infty$ and let $\omega$ be a radial weight. Then the following statements hold:
	\begin{enumerate}
	\item[\rm(i)] If $p<q$, then $L^q_p(\omega)\subsetneq T^q_p(\omega)$ with $\|f\|_{T^q_p(\omega)}\lesssim
	\|f\|_{L^q_p(\omega)}$ for all measurable functions $f$; 
	\item[\rm(ii)] $L^p_p(\omega)=L^p_\om=T^p_p(\omega)$ with equivalence of quasinorms;
	\item[\rm(iii)] If $q<p$, then $T^q_p(\omega)\subsetneq L^q_p(\omega)$ with $\|f\|_{L^q_p(\omega)}\lesssim\| f\|_{T^q_p(\omega)}$
for all measurable functions $f$.
	\end{enumerate}
\end{proposition}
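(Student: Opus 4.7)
I plan to treat (ii), (i), and (iii) separately, unifying them through the identity
\begin{equation*}
\int_{\Gamma(e^{i\theta})}|f|^p\om\,\frac{dA}{1-|\cdot|}
=\frac{2}{\pi}\int_0^1\om(\rho)\rho\,A_{1-\rho}\bigl(|f(\rho e^{i\cdot})|^p\bigr)(\theta)\,d\rho,
\end{equation*}
obtained by parametrizing $\Gamma(e^{i\theta})$ as $\{\rho e^{i(\theta+\psi)}:|\psi|<1-\rho\}$, where $A_rh(\theta)=\frac{1}{2r}\int_{\theta-r}^{\theta+r}h$. Writing $g_\rho(\theta)=|f(\rho e^{i\theta})|^p$, $u(\theta)=\int_0^1\om\rho g_\rho(\theta)\,d\rho$, and $v(\theta)=\frac{2}{\pi}\int_0^1\om\rho A_{1-\rho}(g_\rho)(\theta)\,d\rho$, both quasinorms become $L^{q/p}(\T)$-quasinorms of $u$ and $v$ respectively, up to explicit constants.

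Part (ii) is immediate from Fubini: the arc $\{\xi\in\T:z\in\Gamma(\xi)\}$ has length $2(1-|z|)$, which collapses the tent double integral to $\int_{\D}|f|^p\om\,dA$, and polar integration identifies the latter with $\|f\|_{L^p_p(\om)}^p$ up to constants.

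Part (i), with $s:=q/p>1$, I would treat by duality. Writing $\|v\|_{L^s(\T)}=\sup\{\int v\phi\,d\theta:\phi\ge0,\ \|\phi\|_{L^{s'}}\le1\}$ for $s'=s/(s-1)$, Fubini and the self-adjointness of the symmetric operator $A_{1-\rho}$ move the averaging onto $\phi$. Since $A_{1-\rho}\phi\le M\phi$, where $M$ is the Hardy--Littlewood maximal function on $\T$, H\"older combined with the $L^{s'}$-boundedness of $M$ (valid precisely because $s'\in(1,\infty)$, equivalently $s>1$) yields $\int v\phi\lesssim\|u\|_{L^s}\|\phi\|_{L^{s'}}$, whence $\|v\|_{L^s}\lesssim\|u\|_{L^s}$, i.e.\ $\|f\|_{T^q_p(\om)}\lesssim\|f\|_{L^q_p(\om)}$.

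Part (iii), with $s<1$, is the delicate one since $L^s$ has no useful dual. My plan is to use the alternative representation $v(\theta)=\pi^{-1}\int_{-1}^1 w_t(\theta)\,dt$ with $w_t(\theta)=\int_0^1\om\rho|f(\rho e^{i(\theta+(1-\rho)t)})|^p\,d\rho$ and $u=w_0$, obtained via the substitution $\psi=(1-\rho)t$. Jensen's inequality in its concave direction, with probability measure $dt/2$ on $[-1,1]$, gives $v(\theta)^s\gtrsim\int_{-1}^1 w_t(\theta)^s\,dt$, reducing matters to the scalar estimate
\begin{equation*}
\|w_0\|_{L^s(\T)}^s\lesssim\int_{-1}^1\|w_t\|_{L^s(\T)}^s\,dt.
\end{equation*}
The main obstacle is that the angular shift $(1-\rho)t$ depends on $\rho$, so $t\mapsto\|w_t\|_{L^s}$ is not a translate of its value at $t=0$ and a naive change of variables fails. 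I would proceed by Fubini in $(\theta,t,\rho)$, applying Jensen once more to $(\int\om\rho g_\rho\,d\rho)^s$ to push the $s$-power through the outer integrals, and then exploiting translation invariance of $L^s(\T)$ separately for each fixed $\rho$ to absorb each $\rho$-dependent shift. The strict inclusions come from angular power singularities $f(\rho e^{i\theta})=|\theta|^{-\alpha}$ with $\alpha\in(1/p,1/q)$ for (iii), which force non-integrable cone integrals near $\xi=1$ while the radial $L^p$-integrals at each angle remain finite, and from a symmetric construction for (i).
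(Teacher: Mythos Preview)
Parts (i) and (ii) of your outline are correct and essentially identical to the paper's argument: Fubini to collapse the tent integral, duality in $L^{q/p}(\T)$, self-adjointness of the averaging operator, and the Hardy--Littlewood maximal theorem on $\T$.

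The genuine gap is in Part~(iii). Your plan reduces to the scalar estimate
\[
\|w_0\|_{L^s(\T)}^s\lesssim\int_{-1}^{1}\|w_t\|_{L^s(\T)}^s\,dt,\qquad s=q/p<1,
\]
and the mechanism you propose for it does not close. A ``second Jensen'' applied to $(\int\om\rho\,g_\rho\,d\rho)^s$ with $s<1$ (concave case) yields only the \emph{lower} bound $\bigl(\int\om\rho\,g_\rho\,d\rho\bigr)^s\ge C\int\om\rho\,g_\rho^s\,d\rho$. On the right-hand side this gives, after Fubini and translation invariance, $\int_{-1}^{1}\|w_t\|_s^s\,dt\ge C\int\om\rho\,\|g_\rho\|_s^s\,d\rho$. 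But on the left you need an \emph{upper} bound for $\|w_0\|_s^s$ in terms of $\int\om\rho\,\|g_\rho\|_s^s\,d\rho$; that would be a Minkowski inequality in $L^s$, and Minkowski reverses for $s<1$, so you again obtain a lower bound. No combination of Jensen and per-layer translation invariance bridges this direction.

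The paper avoids the obstacle by a different route: pick $b>1/q$, write $\|f\|_{L^q_p(\om)}=\||f|^{1/b}\|_{L^{bq}_{bp}(\om)}^{\,b}$, and invoke the Benedek--Panzone duality $(L^{bq}_{bp}(\om))^\star\simeq L^{(bq)'}_{(bp)'}(\om)$, available since $bq,bp>1$. After Fubini and two H\"older inequalities this yields $\|f\|_{L^q_p(\om)}\lesssim\|f\|_{T^q_p(\om)}\cdot\sup_{\|g\|\le1}\|g\|_{T^{(bq)'}_{(bp)'}(\om)}$, and because $q<p$ forces $(bq)'>(bp)'$, the supremum is controlled by Part~(i). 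So (iii) is reduced to (i) via a power trick and mixed-norm duality---this is the idea your sketch is missing.

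Your counterexample $f(\rho e^{i\theta})=|\theta|^{-\alpha}$ with $\alpha\in(1/p,1/q)$ does settle the strict inclusion in~(iii). For~(i), however, ``a symmetric construction'' is too vague: a radially constant singularity will not separate $T^q_p$ from $L^q_p$ in that direction, and the paper instead uses $f(re^{i\theta})=(1-r)^{1/p}|\theta|^{-1/q}\om(r)^{-1/p}$, where the radial factor is essential.
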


Proposition~\ref{th:intro1} is likely known for experts working in the area but we give a detailed proof because we did not find it in the literature.  

In the theory of weighted spaces of analytic functions it is often useful to know if the inducing weight of the quasinorm in question can be replaced by another one which possesses strong smoothness properties. Our next result, which is strongly based in Theorem~\ref{th:mainnotangencial}, falls into this category of results and shows that, whenever $\om\in\DDD$, we may replace it in the quasinorm by the regularized weight 
	$$
	\widetilde{\omega}(z)=\frac{\widehat{\omega}(z)}{1-|z|},\quad z\in\D,
	$$
which does not have zeros. Since for $\om\in\DDD$ we have $\widetilde\om\in\DDD$, we may iterate the result and thus assume that the inducing weight is, for instance, differentiable and non-vanishing.

\begin{theorem}\label{th:samespace2}
Let $0<p,q<\infty$ and let $\om$ be a radial weight. Then the following statements are equivalent:
	\begin{itemize}
	\item[\rm(i)] $\om\in\DDD$;
	\item[\rm(ii)] $AL^q_p(\om)=AL^q_p(\widetilde{\om})$ with equivalence of quasinorms;
	\item[\rm(iii)] $AT^q_p(\om)=AT^q_p(\widetilde{\om})$ with equivalence of quasinorms.
	\end{itemize}
In particular, if $\om\in\DDD$ then $AL^q_p(\om)=AT^q_p(\om)=AL^q_p(\widetilde{\om})=AT^q_p(\widetilde{\om})$ with equivalence of quasinorms.
\end{theorem}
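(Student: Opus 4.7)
The plan is to prove the four implications separately: (i)$\Rightarrow$(ii), (i)$\Rightarrow$(iii), and the converses (ii)$\Rightarrow$(i) and (iii)$\Rightarrow$(i); the final sentence will then follow by chaining these identities through Theorem~\ref{th:samespaceintro 1}. As a preliminary step I would verify that $\om\in\DDD$ implies $\widetilde{\om}\in\DDD$: a $K$-adic splitting
\begin{equation*}
\widehat{\widetilde{\om}}(r)=\int_r^1\frac{\omg(s)}{1-s}\,ds=\sum_{k\ge0}\int_{r_k}^{r_{k+1}}\frac{\omg(s)}{1-s}\,ds
\end{equation*}
along $r_k=1-(1-r)K^{-k}$, combined with the two-sided doubling of $\omg$, yields $\widehat{\widetilde{\om}}(r)\asymp\omg(r)$ on $[0,1)$, from which both doubling conditions transfer from $\om$ to $\widetilde{\om}$.

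For (i)$\Rightarrow$(ii) the heart of the matter is the pointwise-in-$\theta$ radial equivalence
\begin{equation*}
\int_0^1|f(re^{i\theta})|^p\om(r)\,r\,dr\asymp\int_0^1|f(re^{i\theta})|^p\widetilde{\om}(r)\,r\,dr,\quad f\in\H(\D),\ \theta\in[0,2\pi).
\end{equation*}
For the upper bound I would combine the characterization $(1-r)\widetilde{\om}(r)\asymp\int_r^{(1+r)/2}\om(s)\,ds$, valid since $\om\in\DD$, with the sub-mean-value inequality $|f(re^{i\theta})|^p\lesssim(1-r)^{-2}\int_{D(re^{i\theta},\delta(1-r))}|f|^p\,dA$, and then apply Fubini to convert the two-dimensional average into an $\om$-radial integral over a narrow angular sector around the ray $\arg z=\theta$; the angular factor $\sim(1-r)$ coming from the width of the sector cancels the $(1-r)^{-1}$ in $\widetilde{\om}$. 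The lower bound is symmetric, using the $\Dd$-condition on intervals of the form $[r,1-(1-r)/K]$. Raising to the power $q/p$ and integrating in $\theta$ yields (ii). The implication (i)$\Rightarrow$(iii) follows from (ii) and Theorem~\ref{th:samespaceintro 1} applied to both $\om$ and $\widetilde{\om}$, since both lie in $\DD$.

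For the converses I would test the quasinorm equivalence on monomials. A direct computation gives $\|z^n\|_{L^q_p(\om)}\asymp\|z^n\|_{T^q_p(\om)}\asymp\om_{np+1}^{1/p}$ independently of $q$, where $\om_m=\int_0^1 r^m\om(r)\,dr$; hence either (ii) or (iii) forces $\om_m\asymp\widetilde{\om}_m$ along the arithmetic progression $m=np+1$, $n\in\N$. Using the Fubini identity $\widetilde{\om}_m=\sum_{j>m}\om_j/j$ and the known moment characterizations of $\DD$ and $\Dd$ due to Pel\'aez-R\"atty\"a (expressed via comparisons of the form $\om_n\lesssim\om_{2n}$ and $\om_n\ge C\om_{Kn}$ with $C,K>1$), this two-sided moment equivalence translates into the two-sided integral doubling of $\om$, i.e.\ $\om\in\DDD$. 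Finally, the ``in particular'' statement is immediate from (ii), (iii), and the two identities $AL^q_p(\om)=AT^q_p(\om)$ and $AL^q_p(\widetilde{\om})=AT^q_p(\widetilde{\om})$ supplied by Theorem~\ref{th:samespaceintro 1}. The main technical obstacle is the pointwise-in-$\theta$ radial equivalence used in (i)$\Rightarrow$(ii): controlling one-dimensional radial integrals by two-dimensional subharmonic averages without loss in the angular variable requires a careful placement of the Euclidean averaging disks inside angular slices of width comparable to $1-r$.
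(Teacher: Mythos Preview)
Your approach to the converse implications (ii)$\Rightarrow$(i) and (iii)$\Rightarrow$(i) via monomial testing and moment characterizations is correct and is exactly what the paper does. The preliminary step $\widehat{\widetilde{\om}}\asymp\widehat{\om}$ is also fine and is Lemma~\ref{Lem_D}(iii) in the paper.

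The gap is in (i)$\Rightarrow$(ii): the pointwise-in-$\theta$ radial equivalence you state is \emph{false} in general, and the subharmonic-averaging argument you sketch cannot rescue it. For a concrete obstruction, take $\om=\chi_{[0,1/2]}+\chi_{[3/4,1)}$ (one checks directly that $\om\in\DDD$), fix $\theta=0$, and set $f_B(z)=\exp\bigl(-B(z-5/8)^2\bigr)$ for large $B>0$. Then $|f_B(r)|^p$ is a Gaussian peaked at $r=5/8$, where $\om$ vanishes but $\widetilde{\om}\asymp 1$; the ratio $\int_0^1|f_B(r)|^p\widetilde{\om}(r)r\,dr\big/\int_0^1|f_B(r)|^p\om(r)r\,dr$ is of order $e^{pB/64}$ and blows up. The sub-mean-value estimate over a disc $D(re^{i\theta},\delta(1-r))$ does not help here because it inevitably spreads mass in the angular variable: after Fubini you land on a Stolz-type integral $\int_{\Gamma_M(e^{i\theta})}|f|^p\om(\cdot)(1-|\cdot|)^{-1}dA$, which is the $T^q_p(\om)$-integrand, not the $L^q_p(\om)$-integrand along the single ray. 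Your own closing remark about ``controlling one-dimensional radial integrals by two-dimensional subharmonic averages without loss in the angular variable'' is precisely the point where the argument breaks.

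The paper's route avoids this entirely. It never claims a pointwise radial equivalence; instead it replaces $|f|$ by the monotone majorant $N_M(f)$ (or the tent-space analogue $\Phi_{f,M,p}$), uses the elementary weight-comparison observation \eqref{trivial} for non-decreasing integrands together with $\widehat{\widetilde{\om}}\asymp\widehat{\om}$, and then invokes Theorem~\ref{th:mainnotangencial} to bound $\|N_M(f)\|_{L^q_p(\om)}$ (resp.\ $\|N_M(f)\|_{T^q_p(\om)}$) back by $\|f\|_{AL^q_p(\om)}$ (resp.\ $\|f\|_{AT^q_p(\om)}$). The key insight you are missing is that the maximal theorem is what substitutes for the unavailable pointwise bound: monotonicity of $N_M(f)$ in the radial direction is exactly what makes \eqref{trivial} applicable, and Theorem~\ref{th:mainnotangencial} is what lets you pay for the passage $|f|\mapsto N_M(f)$.
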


Another commonly used tool in the operator theory in spaces of analytic functions is quasinorms in terms of the iterated derivatives. In our context, the Littlewood-Paley formula
	\begin{equation}\label{eq:LPintro}
  \|f\|_{AT_p^q(\om)}\asymp\|f^{(k)} (1-|\cdot|)^k\|_{T_p^q(\om)}+\sum_{j=0}^{k-1}|f^{(j)}(0)|, \quad f\in \H(\D),
	\end{equation}
is known for the standard radial weights $\omega$ and $0<p,q<\infty$ by \cite[Theorem~2]{Perala2018}, see also \cite{ACPMED2021} and \cite{OFJFA1997} for related results. This raises the question of which properties of radial weights are determinative for \eqref{eq:LPintro} to hold. Our next result provides a neat answer to this question.

\begin{theorem}\label{thm:LPchar}
Let $0<p,q<\infty$ and $k\in\N$, and let $\omega$ be a radial weight. Then the following statements are equivalent:
	\begin{enumerate}
  \item[\rm(i)] $\omega\in\mathcal{D}$;
  \item[\rm(ii)] $\displaystyle\|f\|_{AT_p^q(\om)}\asymp\|f^{(k)}(1-|\cdot|)^k\|_{T_p^q(\om)}+\sum_{j=0}^{k-1}|f^{(j)}(0)|$ for all $f\in \H(\D)$;
  \item[\rm(iii)] $\displaystyle\|f\|_{AL_p^q(\om)}\asymp\|f^{(k)}(1-|\cdot|)^k\|_{L_p^q(\om)}+\sum_{j=0}^{k-1}|f^{(j)}(0)|$ for all $f\in \H(\D)$.
  \end{enumerate}
\end{theorem}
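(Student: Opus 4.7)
The plan is to establish the four implications (i) $\Rightarrow$ (ii), (i) $\Rightarrow$ (iii), (ii) $\Rightarrow$ (i), and (iii) $\Rightarrow$ (i). The forward implications split into an \emph{easy} half obtained from Theorem~\ref{th:mainnotangencial} and a \emph{hard} half that reduces to a weighted Hardy inequality on the smooth regularization $\widetilde{\om}$. The backward implications are both proved by testing the LP equivalence against the monomials $f_n(z)=z^n$.

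The easy halves of (ii) and (iii) follow directly from Theorem~\ref{th:mainnotangencial}: Cauchy's estimate on $D(z,(1-|z|)/2)$ gives
\[
(1-|z|)^k|f^{(k)}(z)|\le C_k\sup_{w\in D(z,(1-|z|)/2)}|f(w)|,\qquad z\in\D,\ f\in\H(\D),
\]
which upgrades to $N_M((1-|\cdot|)^k f^{(k)})(\xi)\lesssim N_{M'}(f)(\xi)$ for some $M'=M'(M)$ and all $\xi\in\overline\D$; Theorem~\ref{th:mainnotangencial} then yields $\|(1-|\cdot|)^k f^{(k)}\|_{X}\lesssim \|f\|_{AX}$ for both $X\in\{L^q_p(\om),T^q_p(\om)\}$, while the term $\sum_{j=0}^{k-1}|f^{(j)}(0)|$ is controlled by point evaluation. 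For the reverse inequalities I would use Theorems~\ref{th:samespaceintro 1} and \ref{th:samespace2} to replace $\om$ by $\widetilde\om$ and to identify $AL^q_p(\om)=AT^q_p(\om)$, so that it suffices to treat the $L^q_p$ version. Writing $P(z)=\sum_{j=0}^{k-1}f^{(j)}(0)z^j/j!$, the function $f-P$ has vanishing Taylor coefficients of order $<k$, and Taylor's integral remainder in the radial direction gives
\[
|f(re^{i\theta})-P(re^{i\theta})|\le \frac{1}{(k-1)!}\int_0^r|f^{(k)}(se^{i\theta})|(1-s)^{k-1}\,ds.
\]
A weighted Hardy inequality calibrated to $\widetilde\om\in\DDD$ via the standard descriptions $\widehat\om(r)\asymp(1-r)\widetilde\om(r)$ and moment estimates available for doubling weights (see \cite{PR19}) then converts the right-hand side into $\|(1-|\cdot|)^k f^{(k)}\|_{L^q_p(\om)}$, completing the hard direction.

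For the converses (ii) $\Rightarrow$ (i) and (iii) $\Rightarrow$ (i), I would test the LP formulas on $f_n(z)=z^n$ with $n\ge k$. A short polar-coordinate computation shows
\[
\|z^n\|_{L^q_p(\om)}^q\asymp\|z^n\|_{T^q_p(\om)}^q\asymp \om_{np+1}^{q/p},\qquad \om_x:=\int_0^1 r^x\om(r)\,dr,
\]
and, since $f_n^{(k)}(z)=n(n-1)\cdots(n-k+1)z^{n-k}$,
\[
\|(1-|\cdot|)^k f_n^{(k)}\|_{X}^q\asymp n^{qk}\left(\int_0^1 r^{(n-k)p+1}(1-r)^{kp}\om(r)\,dr\right)^{q/p}
\]
for both $X\in\{L^q_p(\om),T^q_p(\om)\}$. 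Because the polynomial part vanishes for $n\ge k$, the LP equivalence reduces to the single moment identity
\[
\om_{np+1}\asymp n^{kp}\int_0^1 r^{(n-k)p+1}(1-r)^{kp}\om(r)\,dr,\qquad n\ge n_0.
\]
Comparing this at $n$ and $2n$ and invoking the standard dictionary between moment comparisons and doubling (see, e.g., \cite{PelSum14,PR19}) forces $\om\in\DD\cap\Dd=\mathcal{D}$.

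The main obstacle is the weighted Hardy inequality required for the hard half of (i) $\Rightarrow$ (ii), (iii), particularly in the quasi-Banach range $\min(p,q)<1$, where the usual duality-based Muckenhoupt conditions are unavailable and one must instead lean on the vectorial maximal estimates already used to prove Theorem~\ref{th:mainnotangencial}. A secondary subtlety is the transfer between the $L^q_p(\om)$ and $T^q_p(\om)$ versions of the right-hand side of LP: Proposition~\ref{th:intro1} supplies only one of the two inequalities for the non-analytic function $(1-|\cdot|)^k f^{(k)}$, so the missing direction has to be recovered from the log-subharmonicity of $|f^{(k)}|$ combined with Theorem~\ref{th:mainnotangencial}.
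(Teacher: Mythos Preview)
Your overall architecture matches the paper's: the forward direction splits into an easy half (derivative controlled by function) and a hard half (function controlled by derivative), and the converses come from testing on monomials. The monomial test is exactly what the paper does, reducing to the moment identity $\om_{np+1}\asymp n^{kp}(\om_{[kp]})_{np+1}$ and then invoking \cite[(1.2) and Theorem~3]{PR19}. Your hard half is also close in spirit to the paper's Theorem~\ref{Thm:LP}: the paper likewise reduces via Theorem~\ref{th:samespace2} to the $L^q_p$ case and then proves a weighted Hardy-type inequality, splitting into $p>1$ (H\"older with a weight perturbation $h(t)=(1-t)^{1-(1-\e)/p}$) and $0<p\le1$ (a dyadic decomposition), which is precisely the obstacle you flag.

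There is, however, a genuine gap in your ``easy half.'' The pointwise inequality $N_M\big((1-|\cdot|)^k f^{(k)}\big)(\xi)\lesssim N_{M'}(f)(\xi)$ \emph{fails} for $\xi\in\D$. Take $k=1$, $\xi=1/2$, $f(z)=z^n$, and let $z\in\Gamma_M(1/2)$ approach $1/2$ along the radius: then $(1-|z|)|f'(z)|\to n/2^{n}$ while $N_{M'}(f)(1/2)=\sup_{|w|<1/2}|w|^n=2^{-n}$, so the ratio is $\asymp n$. The geometric reason is that the Cauchy disc $D(z,(1-|z|)/2)$ extends past the circle $\{|w|=|\xi|\}$ when $z$ is near a vertex $\xi\in\D$, and hence cannot sit inside any cone $\Gamma_{M'}(\xi)$. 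In fact the paper shows in Theorem~\ref{theo_LP_1} that the easy inequality $\|f^{(k)}(1-|\cdot|)^k\|_X\lesssim\|f\|_{AX}$ is \emph{equivalent} to $\om\in\DD$, so it cannot follow from Theorem~\ref{th:mainnotangencial} alone. The correct route is local rather than global: bound $(1-|z|)^k|f^{(k)}(z)|$ by $N_{M'}(f)\big(\tfrac{1+|z|}{2}e^{i\arg z}\big)$ (the Cauchy disc does fit into a cone with vertex shifted outward), and then undo the radial shift $r\mapsto(1+r)/2$ in the $L^q_p(\om)$ or $T^q_p(\om)$ norm using \eqref{trivial} together with the doubling hypothesis $\widehat\om(r)\lesssim\widehat\om\!\big(\tfrac{1+r}{2}\big)$. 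This is where $\om\in\DD$ enters the easy half, and without it your argument would prove the inequality for every radial weight---which is false.
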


Theorem~\ref{thm:LPchar} is known in the case $q=p$ which corresponds to the weighted Bergman space~\cite[Theorem~5]{PR19}. The proof of Theorem~\ref{thm:LPchar} relies strongly on Theorem~\ref{th:mainnotangencial} together with  properties of radial doubling weights. We will actually prove slightly more than what is stated above. Namely, we will show that the inequality
	\begin{equation}\label{eq:introLP2}
  \|f^{(k)} (1-|\cdot|)^k\|_{T_p^q(\om)}+\sum_{j=0}^{k-1}|f^{(j)}(0)|\lesssim\|f\|_{AT_p^q(\om)},\quad f\in \H(\D),
	\end{equation} 
or its analogue for $AL_p^q(\om)$, holds if and only if $\om\in\DD$. This result extends \cite[Theorem~6]{PR19}.

We next appeal to Theorem~\ref{thm:LPchar} to describe the analytic symbols such that certain integration operators are bounded on $AT^q_p(\omega)$. To give the precise statement, for each $g\in\H(\D)$, define
	$$
	T_g(f)(z)=\int_0^z f(\zeta)g'(\zeta)\,d\zeta,\quad z\in\D,
	$$
and for $a=(a_1,a_2,\dots, a_{n-1})\in \C^{n-1}$, set
	\begin{align*}
  T_{g,a}(f)= T_{I}^{n}\left(f g^{(n)}+\sum_{k=1}^{n-1} a_{k} f^{(k)} g^{(n-k)}\right), \quad I(z)=z, z\in\D.
	\end{align*}
The generalized integration operator $T_{g,a}$ was introduced by Chalmoukis in \cite{Chalmoukis}.

\begin{theorem}\label{thm:Tga_bound}
Let $0<p,q<\infty$, $n\in \N$, $a\in \mathbb{C}^{n-1}$ and $\omega\in \mathcal{D}$. Then $T_{g,a}: AL_p^q(\omega)\rightarrow AL_p^q(\omega)$ is bounded if and only if 
$g$ belongs to the classical Bloch space $\mathcal{B}$. 
\end{theorem}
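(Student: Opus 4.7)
The plan is to combine the Littlewood--Paley formula of Theorem~\ref{thm:LPchar} for $AL_p^q(\om)$ with the classical higher-order characterization of the Bloch space, namely $g\in \mathcal{B}$ if and only if $\sup_{z\in\D}(1-|z|)^{j}|g^{(j)}(z)|<\infty$ for some (equivalently, every) integer $j\ge 1$. Set $\Phi_f:=fg^{(n)}+\sum_{k=1}^{n-1}a_k f^{(k)}g^{(n-k)}$. Then $T_{g,a}(f)=T_I^{n}(\Phi_f)$ vanishes at the origin together with its first $n-1$ derivatives, while $(T_{g,a}(f))^{(n)}=\Phi_f$. Hence Theorem~\ref{thm:LPchar}(iii) applied with $k=n$ yields the baseline equivalence
\[
\|T_{g,a}(f)\|_{AL_p^q(\om)}\asymp \|\Phi_f\,(1-|\cdot|)^{n}\|_{L_p^q(\om)}, \quad f\in \H(\D),
\]
on which the whole argument hinges.

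For the sufficiency, suppose $g\in \mathcal{B}$. The Bloch derivative estimates $(1-|z|)^{j}|g^{(j)}(z)|\lesssim \|g\|_{\mathcal{B}}$ for $1\le j \le n$ yield the pointwise inequality
\[
|\Phi_f(z)|(1-|z|)^{n}\lesssim \|g\|_{\mathcal{B}}\left(|f(z)|+\sum_{k=1}^{n-1}|a_k|\,|f^{(k)}(z)|(1-|z|)^{k}\right),\quad z\in\D.
\]
Taking $L_p^q(\om)$-quasinorms and invoking Theorem~\ref{thm:LPchar}(iii) once for each $k=1,\dots,n-1$ to estimate $\|f^{(k)}(1-|\cdot|)^{k}\|_{L_p^q(\om)}\lesssim \|f\|_{AL_p^q(\om)}$ (the $k=0$ case being trivial), one obtains $\|T_{g,a}(f)\|_{AL_p^q(\om)}\lesssim \|g\|_{\mathcal{B}}\|f\|_{AL_p^q(\om)}$.

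For the necessity, the goal is to deduce from $\|\Phi_f(1-|\cdot|)^n\|_{L_p^q(\om)}\lesssim \|f\|_{AL_p^q(\om)}$, $f\in \H(\D)$, that $\sup_{a\in\D}(1-|a|)^n|g^{(n)}(a)|<\infty$. The strategy is to test against a family of ``peaking'' functions $\{F_a\}_{a\in \D}\subset AL_p^q(\om)$ normalized so that $\|F_a\|_{AL_p^q(\om)}\lesssim 1$ uniformly, and so that on the Carleson-type box attached to $a$ the function $|F_a|$ admits a pointwise lower bound while $|F_a^{(k)}|(1-|\cdot|)^k$ stays of the same order. Such test functions can be constructed, in the spirit of Chalmoukis~\cite{Chalmoukis}, from normalized powers of Cauchy-type kernels adapted to the weight $\om\in\mathcal{D}$, whose uniform quasinorm estimates follow from Forelli--Rudin type integral bounds for doubling radial weights developed in~\cite{PR19}. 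A subharmonicity argument on the said box then transfers an integral lower bound into the desired pointwise estimate $(1-|a|)^n|g^{(n)}(a)|\lesssim 1$ for every $a\in\D$.

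The delicate point is precisely in the necessity: disentangling the leading term $F_ag^{(n)}$ from the lower-order cross terms in $\Phi_{F_a}$, which could a priori dominate. This separation is effected either by the hierarchical scaling of the test functions, which forces $|F_a^{(k)}(z)|(1-|z|)^k$ to be comparable to $|F_a(z)|$ on the box and allows an absorption argument, or by an induction on $n$ that peels off one cross term at a time, reducing the problem to the classical Volterra case $T_g:AL_p^q(\om)\to AL_p^q(\om)$ iff $g\in\mathcal{B}$. The uniform norm estimates on $\{F_a\}$ in this mixed-norm, general-weight setting constitute the bulk of the technical work and rely decisively on the identification $AL_p^q(\om)=AT_p^q(\om)$ from Theorem~\ref{th:samespaceintro 1} together with the theory of radial doubling weights.
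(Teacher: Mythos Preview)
Your sufficiency argument coincides with the paper's.

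For the necessity the outline (Cauchy-kernel test functions, normalized via weighted Forelli--Rudin estimates) is correct, but the ``disentangling'' step you describe has a real gap. The absorption route is circular: even if $|F_a^{(k)}(z)|(1-|z|)^k\asymp|F_a(z)|$ on the box, the hypothesis only controls the full combination $\sum_{k=0}^{n-1}a_kc_k(z)\,g^{(n-k)}(z)(1-|z|)^{n-k}$, and isolating the $k=0$ term would require bounds on $(1-|z|)^{n-k}|g^{(n-k)}(z)|$ for $k\ge1$---that is, already $g\in\mathcal B$. The induction on $n$ is likewise unjustified: boundedness of $T_{g,a}$ for a fixed tuple $a\in\C^{n-1}$ gives no information about any operator $T_{g,b}$ with $b\in\C^{n-2}$, so there is no evident way to ``peel off'' a cross term.

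The paper's argument is pointwise rather than box-based, and the separation comes from varying the exponent of the test kernel. Using Lemma~\ref{Prop2.3} together with the test-function estimate of Lemma~\ref{le:test}, with $f_\lambda(z)=(1-|\lambda|^2)^{\gamma-1/p-1/q}(1-\bar\lambda z)^{-\gamma}$, one evaluates $(T_{g,a}f_\lambda)^{(n)}=\Phi_{f_\lambda}$ at the single point $z=\lambda$ and obtains
\[
\Bigl|\sum_{k=0}^{n-1}a_k(\gamma)_k\,\bar\lambda^k(1-|\lambda|^2)^{n-k}g^{(n-k)}(\lambda)\Bigr|\le C(\gamma)\|T_{g,a}\|,\qquad\lambda\in\D,
\]
valid for every sufficiently large $\gamma$. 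Since the Pochhammer symbols $(\gamma)_k$ are polynomials in $\gamma$ of pairwise distinct degrees, \cite[Lemma~2.3]{Chalmoukis} decouples the terms and yields $g\in\mathcal B$. The mechanism you were missing is precisely this: the separation is achieved by letting the kernel parameter $\gamma$ vary, not by absorption or induction.
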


The last topic of the paper is to describe the dual spaces of $AL^q_p(\om)$ and $AT^q_p(\om)$. Our approach to this matter leads us to characterize the radial weights such that the Bergman projection
	$$
	P_\gamma(f)(z)=(\gamma+1)\int_{\D} \frac{f(\zeta)}{(1-\overline{\zeta}z)^{2+\gamma}}(1-|\z|^2)^{\gamma}\,dA(\z), \quad f\in L^1_\gamma,\quad z\in\D,
	$$
induced by the standard radial weight $(\gamma+1)(1-|\z|^2)^{\gamma}$, is bounded on $L^{q}_p(\omega)$ or $T^{q}_p(\omega)$. We also consider the maximal Bergman projection
	$$
	P^+_\gamma(f)(z)=(\gamma+1)\int_{\D} \frac{f(\zeta)}{|1-\overline{\zeta}z|^{2+\gamma}}(1-|\z|^2)^{\gamma}\,dA(\z), \quad f\in L^1_\gamma, \quad z\in\D.
	$$
For brevity, we write $\om_{x}=\int_0^1r^x\om(r)\,dr$ for the moments of a radial weight $\omega$. If $\om(z)=v_{\gamma}(z)=(\gamma+1)(1-|\z|^2)^{\gamma}$,
we will simply write $\om_x=\gamma_x$ to denote its moments.

\begin{theorem}\label{BergmanProj1}
Let $1<p,q<\infty$ and $-1<\gamma<\infty$, and let $\omega$ be a radial weight. Then the following statements are equivalent:
\begin{enumerate}
\item[\rm(i)] $P_\gamma:L^q_p(\omega)\to AL^q_p(\omega)$ is bounded;
\item[\rm(ii)]  $P^+_\gamma:L^q_p(\omega)\to L^q_p(\omega)$ is bounded;
\item[\rm(iii)] $\displaystyle D_p(\gamma,\omega)=\sup_{n\in\N\cup\{0\}}\frac{\left(\omega_{np+1}\right)^\frac1p\left(\sigma_{np'+1}\right)^\frac1{p'}}{\gamma_{2n+1}}<\infty$, where	
	$$
	\sigma=\sigma_{v_{\gamma},\om,p}=\left(\frac{v_{\gamma}}{\om^\frac1p}\right)^{p'}=\frac{v_{\gamma}^\frac{p}{p-1}}{\om^\frac1{p-1}};
	$$
\item[\rm(iv)] The Bekoll\'e-Bonami type condition 
	\begin{equation*}
	B_p(\gamma,\omega)
	=\sup\limits_{0\le r<1}\frac{\left(\int_r^1 \omega(t)t \,dt\right)^\frac1p\left(\int_r^1 \sigma(t)t\,dt\right)^\frac1{p'}}{\int_r^1  v_{\gamma}(t)t\,dt}<\infty
	\end{equation*}
holds;
\item[\rm(v)] $\left( AL^q_p(\omega)\right)^\star\simeq AL^{q'}_{p'}(\sigma)$ via the $A^2_\gamma$-pairing with equivalence of norms;
\item[\rm(vi)]  $\left( AT^q_p(\omega)\right)^\star\simeq AT^{q'}_{p'}(\sigma)$ via the $A^2_\gamma$-pairing with equivalence of norms;
\item[\rm(vii)] $P_\gamma:T^q_p(\omega)\to AT^q_p(\omega)$ is bounded.
\end{enumerate}
\end{theorem}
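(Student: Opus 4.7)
The plan is to prove the cycle (iv)$\Rightarrow$(ii)$\Rightarrow$(i)$\Rightarrow$(iii)$\Rightarrow$(iv) on the $L^q_p$-side, to complement it with (iv)$\Rightarrow$(vii)$\Rightarrow$(iii) on the tent side, and to derive the dualities (v) and (vi) from the boundedness assertions via the standard argument based on the density of polynomials granted by Theorem~\ref{th:RMaprprox}. The equivalence (iii)$\Leftrightarrow$(iv) is purely weight-theoretic: the elementary estimate $\mu_{n+1}\asymp\int_{1-1/n}^1\mu(r)r\,dr$, valid for every radial weight $\mu$, applied to $\omega$, $\sigma$ and $v_\gamma$ together with a dyadic decomposition of $[0,1)$, converts the supremum over $n\in\N$ in $D_p(\gamma,\omega)$ into the supremum over $r\in[0,1)$ in $B_p(\gamma,\omega)$ and back; this is also what ultimately makes the condition independent of~$q$.

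For (iv)$\Rightarrow$(ii) I would apply Schur's test to the positive kernel $K_\gamma(z,\zeta)=(\gamma+1)(1-|\zeta|^2)^\gamma|1-\bar\zeta z|^{-2-\gamma}$. Because $\omega$ is radial, Minkowski's integral inequality applied first in the angular variable reduces the problem to boundedness of a one-dimensional positive integral operator on the Lebesgue space with measure $\omega(r)r\,dr$; the Schur test function $s(r)=(1-r)^a\omega(r)^{-1/(pp')}$, with a suitably chosen exponent $a$, together with the sharp estimates for $\int_0^1(1-r\rho)^{-2-\gamma}(1-r)^b\,dr$, translates the two Schur integrability conditions precisely into $B_p(\gamma,\omega)<\infty$. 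The pointwise bound $|P_\gamma f|\le P^+_\gamma(|f|)$ then yields (ii)$\Rightarrow$(i), and an analogous Schur analysis adapted to the cone structure of $T^q_p(\omega)$, possibly after reducing to $\widetilde{\omega}$ via Theorem~\ref{th:samespace2}, gives (iv)$\Rightarrow$(vii).

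For (i)$\Rightarrow$(iii) I would test on $f_n(re^{i\theta})=h(r)e^{in\theta}$ with radial bumps $h$. Since $|f_n|=|h|$ is radial, both sides of the boundedness inequality become $q$-independent and computable in terms of one-dimensional integrals: a direct computation with the series expansion of $(1-\bar\zeta z)^{-2-\gamma}$ gives $P_\gamma f_n=\lambda_n(h)\,z^n$ with $\lambda_n(h)\asymp\gamma_{2n+1}^{-1}\int_0^1 h(r)r^{n+1}(1-r^2)^\gamma\,dr$, and optimizing $h$ via the duality between the weighted Lebesgue spaces associated with $\omega$ and $\sigma$ forces $D_p(\gamma,\omega)<\infty$. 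The same test functions handle (vii)$\Rightarrow$(iii), because the integral of a radial function over a cone $\Gamma(\xi)$ does not depend on $\xi$ and again collapses the tent norm to a $q$-independent radial quantity. Finally, (i)$\Leftrightarrow$(v) and (vii)$\Leftrightarrow$(vi) follow by the standard duality recipe: assuming the projection is bounded, every continuous functional on $AL^q_p(\omega)$, respectively $AT^q_p(\omega)$, is represented via the $A^2_\gamma$-pairing after extension from polynomials (dense by Theorem~\ref{th:RMaprprox}), and conversely a bounded duality identification makes $P_\gamma$ the natural projection onto the analytic subspace.

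The hardest step is expected to be the pair (iv)$\Rightarrow$(ii) and (iv)$\Rightarrow$(vii): pushing a Schur test through the iterated mixed norm without losing sharp constants requires the Minkowski interchange to be performed in the correct order, and the test functions $s,t$ must be tuned so that both Schur integrability conditions collapse exactly to the single Bekoll\'e--Bonami-type inequality $B_p(\gamma,\omega)<\infty$ rather than to a strictly stronger one. A secondary delicate point is to verify that $B_p(\gamma,\omega)<\infty$ already forces enough regularity on $\omega$ to invoke Theorems~\ref{th:samespaceintro 1} and~\ref{th:samespace2} when transferring estimates between $AL^q_p(\omega)$ and $AT^q_p(\omega)$ on the tent side.
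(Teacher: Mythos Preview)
Your overall roadmap matches the paper's, and the easier implications---(iii)$\Leftrightarrow$(iv) via moments versus tail integrals, (i)$\Rightarrow$(iii) and (vii)$\Rightarrow$(iii) via radial test functions $h(r)e^{in\theta}$, and the duality equivalences (i)$\Leftrightarrow$(v), (vi)$\Rightarrow$(vii)---are handled essentially as the paper does (the paper defers several of these to \cite{PRR23}).

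The genuine gap is in (iv)$\Rightarrow$(ii). Schur's test is an $L^p$ device; there is no direct analogue for the mixed norm $L^q_p(\omega)=L^q\bigl(\T;L^p([0,1),\omega r\,dr)\bigr)$, and the claim that ``Minkowski's integral inequality applied first in the angular variable reduces the problem to boundedness of a one-dimensional positive integral operator'' does not survive scrutiny. The angular and radial variables in $|1-\bar\zeta z|^{-2-\gamma}$ are genuinely entangled; writing $P^+_\gamma$ as a Banach-valued convolution on $\T$ and applying Young/Minkowski would require $\int_\T\|A_\psi\|_{L^p(\omega)\to L^p(\omega)}\,d\psi<\infty$, and a back-of-the-envelope estimate under $B_p(\gamma,\omega)<\infty$ gives $\|A_\psi\|\gtrsim|\psi|^{-1}$, so this diverges. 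The paper's route is substantially different: it dominates the kernel by a dyadically discretized positive kernel $\tilde H$, bounds the bilinear form $\int(T_{\tilde H}|f|)\,|g|\,\omega_L$ by a sum over dyadic shells involving two radial H\"ormander-type maximal operators $W_{v_\gamma}$ and $W_\omega$ (bounded on $L^p(\omega_L)$ by the Pott--Reguera Bekoll\'e--Bonami estimate \cite{PottRegueraJFA} and by $\omega\in\DD$, respectively), and then applies the Fefferman--Stein vector-valued maximal theorem \cite{FefSt} in the angular variable to absorb the outer $L^q$ norm. The Fefferman--Stein step is precisely what decouples $q$ from $p$, and it has no substitute in a Schur/Minkowski scheme.

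The same objection applies to your ``analogous Schur analysis adapted to the cone structure'' for (iv)$\Rightarrow$(vii). The paper avoids repeating any hard estimate on the tent side: it passes through (v)$\Rightarrow$(vi) via the identification $AT^q_p(\omega)=AL^q_p(\omega)$ of Theorem~\ref{th:samespaceintro 1}, available once one checks $\omega,\sigma\in\DD$ from $B_p(\gamma,\omega)<\infty$---a point you correctly flagged. (The paper also extracts $\omega\in\Dd$ from $B_p(\gamma,\omega)<\infty$, needed for the dyadic geometry of the $\{\rho_n\}$ in the proof of (iv)$\Rightarrow$(ii).)
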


The boundedness of $P_0:L^q_p(0)\to AL^q_p(0)$ was recently proved in \cite[Theorem~4.3]{ACPJFA2022}, see also \cite[Proposition~2.8]{OFJFA1997}. In fact, to show that (i)$\Rightarrow$(iv) we will follow ideas from the proof of \cite[Theorem~4.3]{ACPJFA2022} to control the maximal Bergman kernel $\frac{(1-|\z|^2)^{\gamma}}{|1-\overline{\zeta}z|^{2+\gamma}}$ by an adequate discrete kernel. Among other tools used in the proof, we apply results concerning the boundedness of the H\"ormander-type maximal function
		$$
    M_{\om}(\varphi)(z)=\sup_{z\in S}\frac{1}{\om\left(S\right)}\int_{S}|\varphi(\z)|\om(\z)\,dA(\z)
    $$
where $S$ is a Carleson square. Namely, we will use the fact that $M_\om:L^p_\omega\to L^p_\omega$ is bounded for each $\om\in\DD$ \cite[Theorem~3.4]{PelSum14}, and appeal to \cite[(4.7)]{PottRegueraJFA}, see also \cite{BB}, which states that $M_{v_{\gamma}}:L^p_\om\to L^p_\om$  is bounded if and only if $\omega$ belongs to the Bek\'olle-Bonami class $B_p(\gamma,\omega)$.

\medskip
The rest of the paper is organized as follows. Theorems~\ref{th:mainnotangencial}-- \ref{th:samespace2} are proved in Section~\ref{S2}.
Section~\ref{S3} is mainly devoted to the proof of Theorem~\ref{thm:LPchar}, while Section~\ref{S4} contains the proof of Theorem~\ref{thm:Tga_bound}. Finally, Theorem~\ref{BergmanProj1} is proved in Section~\ref{S5}. 

For clarity, a word about the notation already used in this section and to be used throughout the paper. The letter $C=C(\cdot)$ will denote an absolute constant whose value depends on the parameters indicated in the parenthesis, and may change from one occurrence to another. As usual, for non-negative functions $A$ and $B$, the notation $A\lesssim B$, or equivalently $B\gtrsim A$, means that $A\le C\,B$ for some  constant $C>0$ independent of the variables involved. Further, we write $A\asymp B$ when $A\lesssim B\lesssim A$. 

\section{Maximal functions, equivalent norms and polynomial approximation}\label{S2}

We begin with the nontangential maximal operator acting on $AL^q_p(\om)$ because it serves us as a model for the tent space case which is more involved. The radial maximal function of a measurable function $\phi:\D\to\C$ is 
	\begin{equation}\label{eq:radialmaximal}
	R(\phi)(z)=\sup_{0\le r\le 1}|\phi(rz)|,\quad z\in\D.
	\end{equation}

\begin{theorem}\label{th:radialmaximal}
Let $0<p,q,M<\infty$ and let $\om$ be a radial weight. Then there exists a constant $C=C(p,q,M)>0$ such that
		\begin{equation}\label{eq:RMpqconvunif}
		\|R(f)\|_{L^q_p(\om)}\le \|N_M(f)\|_{L^q_p(\om)}\le  C \|f\|_{AL^q_p(\om)},\quad f\in \H(\D).
		\end{equation} 
\end{theorem}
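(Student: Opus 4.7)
The first inequality $\|R(f)\|_{L^q_p(\omega)}\le\|N_M(f)\|_{L^q_p(\omega)}$ is pointwise. Indeed, for every $r\in[0,1]$, $\theta\in[0,2\pi)$ and $s\in[0,1]$, the point $sre^{i\theta}$ satisfies $|\arg(sre^{i\theta})-\theta|=0\le M(r-sr)$, so the radial segment $\{sre^{i\theta}:s\in[0,1]\}$ is contained in $\Gamma_M(re^{i\theta})$. This gives $R(f)(re^{i\theta})\le N_M(f)(re^{i\theta})$ on $\D$, and the quasinorm inequality follows by monotonicity.

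For the second inequality the plan is to dominate $N_M(f)^p$ pointwise by a radial-angular maximal of $|f|^p$ using subharmonicity, and then absorb that maximal into the $L^q_p(\omega)$ quasinorm via a Fefferman-Stein vector-valued maximal theorem. For $w=se^{i\phi}\in\Gamma_M(re^{i\theta})$ the cone condition gives $s\le r$ and $|\phi-\theta|\le M(r-s)\le M(1-s)$. Choose $\delta=\delta(M)\in(0,1/2)$ small enough that $D(w,\delta(1-s))\subset\D$ lies inside the polar box $\{te^{i\psi}:|t-s|<\delta(1-s),\,|\psi-\phi|<C(M)(1-s)\}$; the case $s$ near $0$ is trivial because $f$ is bounded on compact subsets of $\D$. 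Subharmonicity of $|f|^p$ combined with the angular bound $|\phi-\theta|\le M(1-s)$ then yields
\begin{equation*}
|f(w)|^p\lesssim\frac{1}{(1-s)^2}\int_{s-\delta(1-s)}^{s+\delta(1-s)}\int_{\theta-C(M)(1-s)}^{\theta+C(M)(1-s)}|f(te^{i\psi})|^p t\,d\psi\,dt=:\mathcal{A}(s,\theta).
\end{equation*}
The right-hand side is independent of $\phi$, so taking $\sup_{w\in\Gamma_M(re^{i\theta})}$ gives $N_M(f)(re^{i\theta})^p\lesssim\sup_{0\le s\le r}\mathcal{A}(s,\theta)$.

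To take the $L^q_p(\omega)$ quasinorm, I would discretize by $r_j=1-2^{-j}$ and use the monotonicity of the cones to bound $N_M(f)(re^{i\theta})\le N_M(f)(r_{j+1}e^{i\theta})$ for $r\in[r_j,r_{j+1})$. Splitting the cone $\Gamma_M(r_{j+1}e^{i\theta})$ into dyadic radial shells $s\in[r_k,r_{k+1})$, $k\le j$, and applying the pointwise bound on each shell, the resulting box average at scale $2^{-k}$ is comparable to a Hardy-Littlewood mean on $\T$ of the slab integral $g_k(\psi)=2^k\int_{r_k-c2^{-k}}^{r_k+c2^{-k}}|f(te^{i\psi})|^p t\,dt$. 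Invoking the vector-valued Fefferman-Stein maximal theorem for log-subharmonic functions at the sharp parameter range \cite[p.~212]{Pabook2} applied to $\{g_k\}$ in the angular variable, together with a Hardy-type estimate in the radial variable against $\omega(r)r\,dr$, concludes $\|N_M(f)\|_{L^q_p(\omega)}\lesssim\|f\|_{AL^q_p(\omega)}$.

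The main obstacle is the range $q/p\le 1$ in which the classical Hardy-Littlewood maximal operator is unbounded on $L^{q/p}(\T)$. Here the log-subharmonicity of $|f|$ (a consequence of $f\in\H(\D)$) is decisive: the version of the vector-valued Fefferman-Stein theorem specifically tailored to log-subharmonic families \cite[p.~212]{Pabook2} applies at the sharp range and delivers the required bound for all $0<p,q<\infty$ without restriction on $q/p$.
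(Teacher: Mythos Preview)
Your high-level idea---use the vector-valued Fefferman--Stein theorem for log-subharmonic families from \cite[p.~212]{Pabook2} to cover the full range $0<p,q<\infty$---is exactly the right tool, but the execution has two genuine gaps.

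First, the slab integrals $g_k(\psi)=2^k\int_{r_k-c2^{-k}}^{r_k+c2^{-k}}|f(te^{i\psi})|^p\,t\,dt$ are \emph{not} log-subharmonic: an integral (or sum) of log-subharmonic functions is subharmonic but in general fails to be log-subharmonic. The theorem in \cite[p.~212]{Pabook2} requires each function in the family to be individually log-subharmonic; that hypothesis is precisely what delivers the sharp range $q/p\le1$. Applying it to the $g_k$ is therefore not justified.

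Second, even if one grants the Fefferman--Stein step, your ``Hardy-type estimate in the radial variable'' is where the argument breaks for a general radial weight. After bounding $N_M(f)^p(re^{i\theta})$ by $\sum_{k\le j}M(g_k)(e^{i\theta})$ for $r\in[r_j,r_{j+1})$ and applying Fubini in $j$, the right-hand side becomes $\sum_k M(g_k)(e^{i\theta})\,\widehat\omega(r_k)$. Unwinding $g_k$ then produces $\int_0^1|f(te^{i\theta})|^p\,\widetilde\omega(t)\,t\,dt$ with $\widetilde\omega=\widehat\omega/(1-\cdot)$, not $\int_0^1|f|^p\omega\,t\,dt$. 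These coincide only when $\omega\in\mathcal{D}$, an assumption the theorem does not make.

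The paper avoids both problems by a direct Riemann-sum argument that exploits the monotonicity of $r\mapsto N_M(f)(re^{i\theta})$. Partitioning $[0,s]$ into $n$ equal pieces and using this monotonicity gives
\[
\int_0^s N_M(f)^p(re^{i\theta})\,\omega(r)r\,dr
\le\sum_{j=0}^{n-1}N_M(h_{j,s})(e^{i\theta}),
\qquad
h_{j,s}(z)=\Bigl|f\Bigl(\tfrac{(j+1)s}{n}z\Bigr)\Bigr|^p\!\int_{js/n}^{(j+1)s/n}\omega(r)r\,dr,
\]
where each $h_{j,s}$ is a constant times $|f(\lambda\,\cdot)|^p$ and hence genuinely log-subharmonic. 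Now \cite[p.~212]{Pabook2} applies verbatim, and the resulting right-hand side $\sum_j h_{j,s}(e^{i\theta})$ is a Riemann sum for $\int_0^s|f(re^{i\theta})|^p\omega(r)r\,dr$ itself---no regularized weight, no Hardy inequality, no doubling. Your subharmonic averaging detour is what loses both the log-subharmonic structure and the exact weight.
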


\begin{proof}
The left-hand inequality in \eqref{eq:RMpqconvunif} is obvious, so we only we have to prove the right one. 
First observe that $N_M(\phi)(r\zeta)$ is a non-decreasing function of $r$ for each fixed $\zeta\in\T$. Therefore, for all $s\in(0,1)$ and $\theta\in [0,2\pi)$, we have
		\begin{equation*}
		\begin{split}
		\int_0^s( N_M(f))^p(re^{i\theta})\omega(r)r\,dr
		&=\sum_{j=0}^{n-1}\int_{\frac{js}{n}}^{\frac{(j+1)s}{n}} N_M(|f|^p)(re^{i\theta})\omega(r)r\,dr\\
		&\le\sum_{j=0}^{n-1} N_M(|f|^p)\left(\frac{(j+1)s}{n}e^{i\theta}\right)\int_{\frac{js}{n}}^{\frac{(j+1)s}{n}}\omega(r)r\,dr,\quad n\in\N.
		\end{split}
		\end{equation*}
Since the functions
		$$
		h_{j,s}(z)=\left|f\left(\frac{(j+1)s}{n}z\right)\right|^p \int_{\frac{js}{N}}^{\frac{(j+1)s}{N}}\omega(r)r\,dr,\quad j=0,\dots,n-1,
		$$
are log-subharmonic in $\D$  and continuous in $\overline{\D}$, by \cite[Theorem~7.2 on p.~212 and the comment following it]{Pabook2}, there exists a constant $C=C(p,q)>0$ such that
		\begin{equation*}
		\begin{split}
		\int_{0}^{2\pi}\left(\int_0^s( N_M(f))^p(re^{i\theta})\omega(r)r\,dr\right)^{\frac{q}{p}}\,d\theta
		&\le\int_{0}^{2\pi}\left(\sum_{j=0}^{n-1} N_M(h_{j,s})(e^{i\theta})\right)^{\frac{q}{p}}\,d\theta\\
		&\le C\int_{0}^{2\pi}\left(\sum_{j=0}^{n-1}h_{j,s}(e^{i\theta})\right)^{\frac{q}{p}}\,d\theta\\
		&=C\int_{0}^{2\pi}\left(\sum_{j=0}^{n-1}\left| f \left(\frac{(j+1)s}{n}e^{i\theta}\right)\right|^p 
		\int_{\frac{js}{n}}^{\frac{(j+1)s}{n}}\omega(r)r\,dr\right)^{\frac{q}{p}}\,d\theta.
		\end{split}
		\end{equation*}
By using that $|f|^p$ is uniformly continuous on $\overline{D(0,s)}$, it now follows that 
		$$
		\lim_{n\to\infty}\sum_{j=0}^{n-1}\left|f\left(\frac{(j+1)s}{n}e^{i\theta}\right)\right|^p
		\int_{\frac{js}{n}}^{\frac{(j+1)s}{n}}\omega(r)r\,dr
		=\int_{0}^s|f(re^{i\theta})|^p\omega(r)r\,dr.
		$$
Therefore, by the dominated convergence theorem,
		\begin{equation*}\begin{split}
		\int_{0}^{2\pi}\left(\int_0^s(N_M(f))^p(re^{i\theta})\omega(r)r\,dr\right)^{\frac{q}{p}}\,d\theta
		&\le C\int_{0}^{2\pi}\left(\int_{0}^s|f(re^{i\theta})|^p\omega(r)r\,dr\right)^{\frac{q}{p}}\,d\theta
		\le C\|f\|_{AL^q_p(\om)}^q,
		\end{split}
		\end{equation*}
from which the assertion \eqref{eq:RMpqconvunif} follows by Fatou's lemma. 
\end{proof}

Theorem~\ref{th:radialmaximal} implies that, for each fixed $0<M<\infty$, we have 
		\begin{equation}\label{AAA}
		\|f\|_{AL^q_p(\om)}\asymp\|R(f)\|_{L^q_p(\om)}\asymp\|N_M(f)\|_{L^q_p(\om)},\quad f\in \H(\D).
		\end{equation}
In order to obtain an analogue of this result for the tent spaces we will use the proof of Theorem~\ref{th:radialmaximal} as a toy model. The task we will face then is to find a suitable partition for each cone $\Gamma$ with vertex in $\D$ in terms of sets induced by suitably chosen cones induced by points in $\overline{\Gamma}$. Our cones have been chosen such that they obey convenient geometric properties and that allows us to pull the argument from radii to tents. 

\begin{theorem}\label{th:tentmaximal}
Let $0<p,q,M<\infty$ and let $\om$ be a radial weight. Then there exists a constant $C=C(p,q,M)>0$ such that
		\begin{equation}\label{eq:RMpqconvunif2}
		\|N_M(f)\|_{T^q_p(\om)}\le C \|f\|_{AT^q_p(\om)},\quad f\in \H(\D).
		\end{equation} 
\end{theorem}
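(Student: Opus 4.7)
The plan is to mimic the proof of Theorem~\ref{th:radialmaximal}, the novelty being a two-dimensional partition of $\Gamma(\xi)$ that respects the tent geometry, combined with a rotation trick to reduce to a single boundary point where the vectorial Fefferman-Stein maximal theorem for log-subharmonic functions \cite[p.~212]{Pabook2} can be applied.

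First, since $r\mapsto N_M(f)(re^{i\phi})$ is non-decreasing in $r$, by monotone convergence it suffices to bound
$$I_s(\xi):=\int_{\Gamma(\xi)\cap\{|z|<s\}} N_M(f)^p(z)\,\omega(z)\frac{dA(z)}{1-|z|},\quad \xi=e^{i\theta_0}\in\T,$$
uniformly in $s\in(0,1)$, and let $s\to 1^-$. Fix $s$, take large integer parameters $n,N$ (both eventually $\to\infty$), and set $\rho_j=js/n$ together with the \emph{$\theta_0$-independent} angular offsets $\delta^{(j)}_k=(1-\rho_j)(2k-N)/N$, $k=1,\dots,N$. The cells
$$S^{n,N}_{j,k}(\xi)=\Bigl\{\rho e^{i\phi}:\,\rho\in[\rho_j,\rho_{j+1}],\,|\phi-(\theta_0+\delta^{(j)}_k)|<(1-\rho_j)/N\Bigr\}$$
cover $\Gamma(\xi)\cap\{|z|<s\}$. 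For $N$ large enough (depending only on $M$ and $s$), a direct computation using $1-|w|\geq 1-\rho_{j+1}$ for $w\in\Gamma_M(z)$ yields the cone inclusion $\Gamma_M(z)\subseteq\Gamma_{2M}(e^{i(\theta_0+\delta^{(j)}_k)})$ for $z\in S^{n,N}_{j,k}(\xi)$, which is the covering by unions of cones with vertexes depending on $\xi$ alluded to in the introduction. Combining this with the radial monotonicity of $N_M(|f|^p)$,
$$I_s(\xi)\leq\sum_{j,k}\mu^{n,N}_{j,k}\,N_{2M}(|f|^p)(\rho_{j+1}e^{i(\theta_0+\delta^{(j)}_k)}),\qquad \mu^{n,N}_{j,k}:=\int_{S^{n,N}_{j,k}(\xi)}\frac{\omega(z)\,dA(z)}{1-|z|},$$
where $\mu^{n,N}_{j,k}$ is $\theta_0$-independent by radial symmetry of $\omega$.

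Next, as in Theorem~\ref{th:radialmaximal}, the dilate $h_{j,k,n,N}(w):=\mu^{n,N}_{j,k}|f(\rho_{j+1}w)|^p$ is log-subharmonic on $\D$ and continuous on $\overline{\D}$, and the change of variable $z=\rho_{j+1}w$ gives $\mu^{n,N}_{j,k}N_{2M}(|f|^p)(\rho_{j+1}e^{i\psi})\leq N_{2M}(h_{j,k,n,N})(e^{i\psi})$ for any angle $\psi$. Now set $v_{j,k,n,N}(w):=h_{j,k,n,N}(we^{i\delta^{(j)}_k})$; rotating the cone yields $N_{2M}(h_{j,k,n,N})(e^{i(\theta_0+\delta^{(j)}_k)})=N_{2M}(v_{j,k,n,N})(\xi)$, whence $I_s(\xi)\leq\sum_{j,k}N_{2M}(v_{j,k,n,N})(\xi)$. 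Each $v_{j,k,n,N}$ remains log-subharmonic and continuous on $\overline{\D}$. Applying the vectorial Fefferman-Stein theorem \cite[p.~212]{Pabook2} to this family,
$$\int_\T I_s(\xi)^{q/p}\,|d\xi|\leq C\int_\T\Bigl(\sum_{j,k}\mu^{n,N}_{j,k}|f(\rho_{j+1}\xi e^{i\delta^{(j)}_k})|^p\Bigr)^{q/p}|d\xi|,$$
and the integrand on the right is a Riemann-sum approximation of $\int_{\Gamma(\xi)\cap\{|z|<s\}}|f(z)|^p\omega(z)\,dA(z)/(1-|z|)$. Letting $n,N\to\infty$ (dominated convergence, using continuity of $|f|^p$ on compact subsets of $\D$) and then $s\to 1^-$ yields the desired bound.

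The main obstacle is the geometric step: designing the partition so that (i) the offsets $\delta^{(j)}_k$ are independent of $\xi$---essential for the rotation trick that collapses evaluation at the shifted boundary points $\xi e^{i\delta^{(j)}_k}$ to a single point $\xi$---and simultaneously (ii) the cones $\Gamma_M(z)$ over each cell are all contained in one boundary cone of fixed opening $2M$. Once these two compatibility conditions are secured, the argument proceeds in parallel with Theorem~\ref{th:radialmaximal}.
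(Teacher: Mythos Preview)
There is a genuine gap in the step where you pass from the boundary-vertex cone inclusion to the interior-vertex bound. You correctly show $\Gamma_M(z)\subset\Gamma_{2M}(e^{i\psi})$ with $\psi=\theta_0+\delta^{(j)}_k$, which yields $N_M(|f|^p)(z)\le N_{2M}(|f|^p)(e^{i\psi})$. But radial monotonicity runs the other way: it gives $N_{2M}(|f|^p)(\rho_{j+1}e^{i\psi})\le N_{2M}(|f|^p)(e^{i\psi})$, not the reverse. What your chain actually needs is the inclusion $\Gamma_M(z)\subset\Gamma_{2M}(\rho_{j+1}e^{i\psi})$, and this is false. Indeed, take $z\in S^{n,N}_{j,k}(\xi)$ with $|z|=\rho_{j+1}$ and $\arg z=\psi+(1-\rho_j)/(2N)$; then the radial segment from $0$ to $z$ lies in $\Gamma_M(z)$, and for $u$ on this segment with $|u|$ close to $\rho_{j+1}$ one has $|\arg u-\psi|=(1-\rho_j)/(2N)$ while $2M(\rho_{j+1}-|u|)$ is arbitrarily small. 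Hence $u\notin\Gamma_{2M}(\rho_{j+1}e^{i\psi})$, and no enlargement of the aperture repairs this. The difficulty is intrinsic to a polar-rectangle partition with the dominating vertex placed at the \emph{outer} radius of the cell: the cell touches the vertex, so the dominating cone has no room to absorb an angular offset.

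The paper's partition is designed precisely to avoid this. Instead of polar rectangles, it takes vertices $\xi^n_{j,k}\in\Gamma_M(s\xi)$ and builds the pieces $E^n_{j,k}(s\xi)$ directly out of the sub-cones $\Gamma_M(\xi^n_{j,k})$ by successive set differences, so that $E^n_{j,k}(s\xi)\subset\Gamma_M(\xi^n_{j,k})$ holds \emph{by construction}. The nesting property $\Gamma_M(z)\subset\Gamma_M(w)$ whenever $z\in\Gamma_M(w)$ then gives $N_M(f)(z)\le N_M(f)(\xi^n_{j,k})$ for every $z\in E^n_{j,k}(s\xi)$, with the same aperture $M$ and no appeal to radial monotonicity. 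Since $\xi^n_{j,k}=s^n_{j,k}\xi$ with $s^n_{j,k}$ a complex scalar independent of $\xi$, the rotation/dilation trick you describe (your functions $v_{j,k,n,N}$) is already built into the single dilate $h^n_{j,k}(\zeta)=|f(s^n_{j,k}\zeta)|^p\cdot\mu$, and the rest of the argument---Fefferman--Stein and the Riemann-sum limit---proceeds exactly as in your outline.
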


\begin{proof} 
A direct calculation shows that 
	\begin{equation}
	\Gamma_M(\xi)=\bigcup_{z\in\Gamma_M(\xi)}\Gamma_M(z),\quad \xi\in\overline{\D}\setminus\{0\}.
	\end{equation}
For $\xi\in\overline{\D}\setminus\{0\}$, define $\xi^n_{0,0}=\xi$ and
	$$
	\xi^n_{j,k}
	=\xi\frac{n-j}{n}e^{iM|\xi|\frac{k}{n}},\quad j=1,\ldots,n-1,\quad-j\le k\le j,\quad n\in\N, 
	$$
and set
	$$
	F^n_{j,k}(\xi)=\Gamma_{M}(\xi_{j,k}^n)
	\setminus\left(\bigcup_{l\ge j+1,\,|i|\le l}\Gamma_{M}(\xi_{l,i}^n)\right).
	$$
Further, define $E^n_{j,0}(\xi)=F^n_{j,0}(\xi)$ and $E^n_{j,k}(\xi)=F^n_{j,k}(\xi)\setminus\cup_{|i|\le k-1}F^n_{j,i}(\xi)$ for $0<|k|\le j$ and $j=0,\ldots,n-1$. Then the sets $E^n_{j,k}(\xi)$ are pairwise disjoint such that
	\begin{equation}\label{lfdaskjlkfjahlkafjsh}
	\bigcup_{j=0}^{n-1}\bigcup_{-j\le k\le j}E^n_{j,k}(\xi)=\Gamma_M(\xi),\quad n\in\N,
	\end{equation}
and
	$$
	\max_{j=0,\ldots,n-1}\max_{-j\le k\le j}\diam\,E^n_{j,k}(\xi)\to0,\quad n\to\infty,
	$$
see Figure~\ref{fig:1} for an  illustration of the sets $E^n_{j,k}(\xi)$. \begin{figure}[htp]
  \begin{center}
    \begin{tikzpicture}
      \draw (0, 0) node[inner sep=0] {\includegraphics[keepaspectratio,height=12cm]{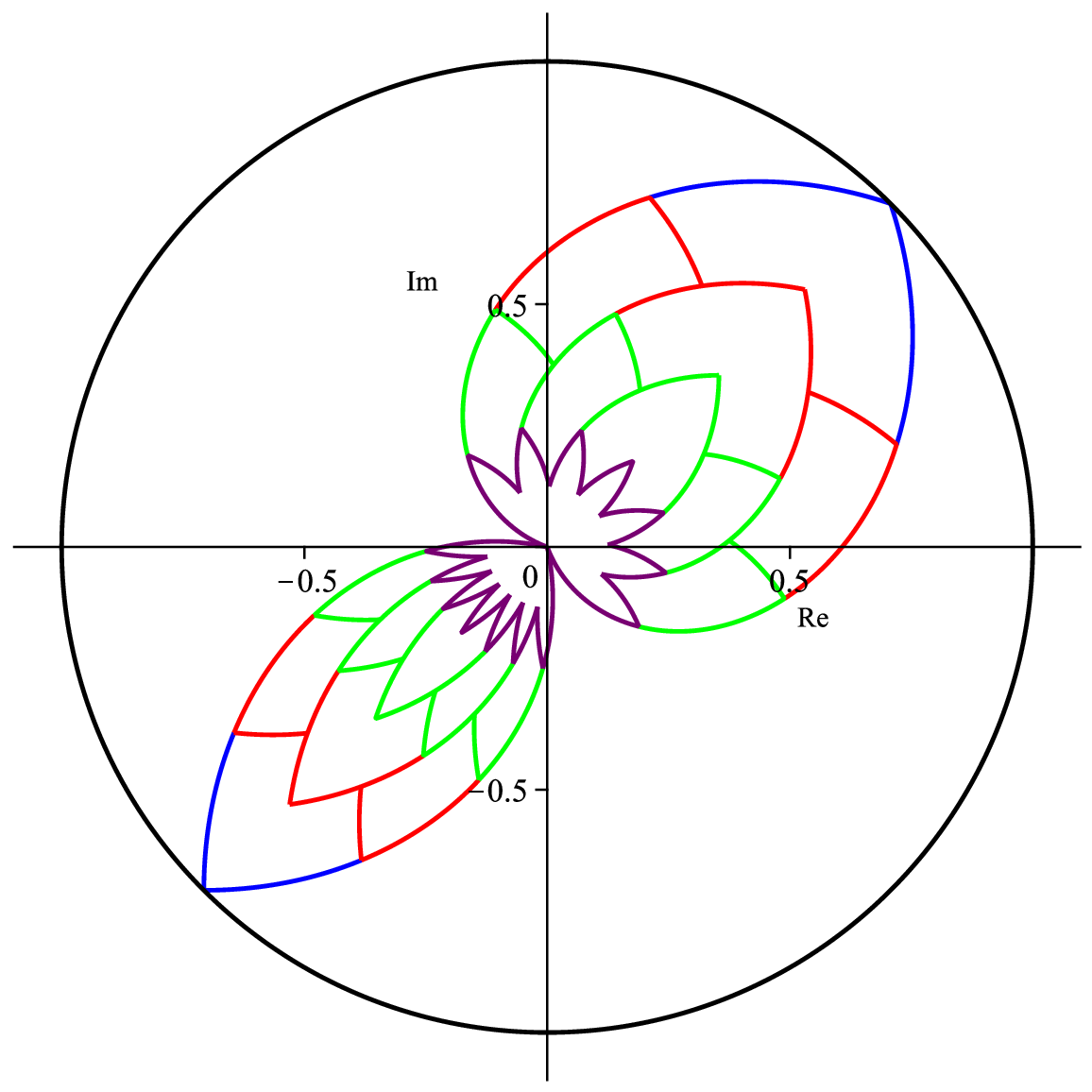}};
      \draw (4.6, 4.1) node {$\xi=\xi_{0,0}^4$};
      \draw (3.3, 3.2) node {$\xi_{1,0}^4$};
      \draw (1.3, 4.4) node {$\xi_{1,1}^4$};
      \draw (4.4, 1.1) node {$\xi_{1,-1}^4$};
      \draw (2.3, 2.2) node {$\xi_{2,0}^4$};
      \draw (0.8, 3.0) node {$\xi_{2,1}^4$};
      \draw (3.2, 0.8) node {$\xi_{2,-1}^4$};
      \draw (3.6, -0.6) node {$\xi_{2,-2}^4$};
      \draw (-0.7, 3.2) node {$\xi_{2,2}^4$};
      \draw (1.4, -1.3) node {$\xi_{3,-3}^4$};
      \draw (-1.3, 1.3) node {$\xi_{3,3}^4$};
    \end{tikzpicture}
    \caption{Two real scale illustrations of the sets $E^n_{j,k}(\xi)$ when $\xi=e^{i\left( \frac{3\pi}{4} \pm  \frac{\pi}{2} \right)}$ and $n=4$. In the case 
$\xi=e^{i\frac{\pi}{4}}$ the opening is $M=2$ while in the other case $M=1$. }
    \label{fig:1}
  \end{center} 
\end{figure} Fatou's lemma implies
	\begin{equation*}
	\begin{split}
	\|N_M(f)\|_{T^q_p(\om)}^q
	&\le\lim_{s\to1^-}\int_\T\left(\int_{\Gamma_M(s\xi)}N_M(f)^p(z)\frac{\om(z)}{1-|z|}\,dA(z)\right)^{\frac{q}{p}}|d\xi|\\
	&=\lim_{s\to1^-}\int_\T\left(\sum_{j=0}^{n-1}\sum_{-j\le k\le j}\int_{E^n_{j,k}(s\xi)}N_M(f)^p(z)\frac{\om(z)}{1-|z|}\,dA(z)\right)^{\frac{q}{p}}|d\xi|\\
	&\le\lim_{s\to1^-}\int_\T\left(\sum_{j=0}^{n-1}\sum_{-j\le k\le j}N_M(f)^p\left((s\xi)^n_{j,k}\right)\int_{E^n_{j,k}(s\xi)}\frac{\om(z)}{1-|z|}\,dA(z)\right)^{\frac{q}{p}}|d\xi|\\
	&=\lim_{s\to1^-}\int_\T\left(\sum_{j=0}^{n-1}\sum_{-j\le k\le j}N_M(h^n_{j,k})(\xi)\right)^{\frac{q}{p}}|d\xi|,
	\end{split}
	\end{equation*}
where
	$$
	h^n_{j,k}(\zeta)=\left|f\left(s^n_{j,k}\zeta\right)\right|^p\int_{E^n_{j,k}(s\xi)}\frac{\om(z)}{1-|z|}\,dA(z),\quad \zeta\in\overline{\D},
	$$
are log-subharmonic in $\D$. Now \cite[Theorem~7.2 on p.~212 and the comment following it]{Pabook2} implies
	\begin{equation*}
	\begin{split}
	\|N_M(f)\|_{T^q_p(\om)}^q
	&\lesssim\lim_{s\to1^-}\int_\T\left(\sum_{j=0}^{n-1}\sum_{-j\le k\le j}h^n_{j,k}(\xi)\right)^{\frac{q}{p}}|d\xi|,\quad n\in\N.
	\end{split}
	\end{equation*}
Since the diameter of each set $E^n_{j,k}(s\xi)$ tends to zero as $n\to\infty$, and $\Gamma(s\xi)$ expands to $\Gamma(\xi)$ as $s\to1^-$, the identity \eqref{lfdaskjlkfjahlkafjsh} yields
	\begin{equation*}
	\begin{split}
	\|N_M(f)\|_{T^q_p(\om)}^q
	&\lesssim\lim_{s\to1^-}\int_\T\left(\int_{\Gamma_M(s\xi)}|f(z)|^p\frac{\om(z)}{1-|z|}\, dA(z)\right)^{\frac{q}{p}}|d\xi|
	\lesssim\|f\|_{AT^q_p(\om)}^q,\quad f\in\H(\D).
	\end{split}
	\end{equation*}
Thus the theorem is proved.
\end{proof}

Theorem~\ref{th:tentmaximal} implies 
		\begin{equation}\label{BBB}
		\|f\|_{AT^q_p(\om)}\asymp\|R(f)\|_{T^q_p(\om)}\asymp\|N_M(f)\|_{T^q_p(\om)},\quad f\in \H(\D).
		\end{equation}
By combining this with \eqref{AAA} we obviously have Theorem~\ref{th:mainnotangencial}.	We next proceed to apply Theorem~\ref{th:mainnotangencial} to prove our other results.

\medskip

\begin{Prf}{\em{Theorem~\ref{th:RMaprprox}.}}
We will only establish the case $AX=AL^q_p(\om)$, the proof for the tent spaces readily follows by the same argument. 

(i). Let $\lambda\in\overline{\D}$. Then Theorem~\ref{th:radialmaximal} implies 
		$$ 
		\|f_\lambda\|_{AL^q_p(\om)}
		=\|f_{|\lambda|}\|_{AL^q_p(\om)}
		\le\|R(f_{|\lambda|})\|_{L^q_p(\om)}
		\le\|R(f)\|_{L^q_p(\om)}
		\lesssim\|f\|_{AL^q_p(\om)},\quad f\in \H(\D).
	$$
	
(ii). Let $f\in AL^q_p(\omega)$. Then
		$$
		\int_0^1|f(se^{i\theta})|^p\omega(s)s\,ds<\infty
		\quad\text{and}\quad
		\lim_{r\to 1^{-}}\int_r^1|f(se^{i\theta})|^p\omega(s)s\,ds=0
		$$
for almost every $\theta\in[0,2\pi]$.	Consequently, for each $\varepsilon>0$, there exists $r_0=r_0(\e)\in(0,1)$ such that
		\begin{equation}\label{eq:j30}
		\int_{0}^{2\pi}\left( \int_{r_0}^1 |f(se^{i\theta})|^p s\omega(s)\,ds\right)^{\frac{q}{p}}\,d\theta<\varepsilon
		\end{equation}
by the dominated convergence theorem. Further, by arguing as in the proof of Theorem~\ref{th:radialmaximal} we find a constant $C=C(q,p)>0$ such that
		\begin{equation}\label{eq:j31}
		\int_{0}^{2\pi}\left( \int_{r_0}^1|R(f)(se^{i\theta})|^p\omega(s)s\,ds\right)^{\frac{q}{p}}\,d\theta
		\le C\int_{0}^{2\pi}\left(\int_{r_0}^1|f(se^{i\theta})|^p\omega(s)s\,ds\right)^{\frac{q}{p}}\,d\theta.
		\end{equation}
By combining \eqref{eq:j30} and \eqref{eq:j31} we obtain
	\begin{equation*}
	\begin{split}
	&\int_{0}^{2\pi}\left(\int_{r_0}^1|(f_\lambda-f_\zeta)(se^{i\theta})|^p\omega(s)s\,ds\right)^{\frac{q}{p}}\,d\theta\\ 
	&\lesssim\int_{0}^{2\pi}\left(\int_{r_0}^1|(f_\lambda)(se^{i\theta})|^p\omega(s)s\,ds\right)^{\frac{q}{p}}\,d\theta
	+\int_{0}^{2\pi}\left(\int_{r_0}^1|f_\zeta(se^{i\theta})|^p\omega(s)s\,ds\right)^{\frac{q}{p}}\,d\theta\\
	&\le2\int_{0}^{2\pi}\left(\int_{r_0}^1|R(f)(se^{i\theta})|^p\omega(s)s\,ds\right)^{\frac{q}{p}}\,d\theta
	\le 2C\varepsilon,\quad \lambda,\zeta\in\overline{\D}.
	\end{split}
	\end{equation*}
Since $f$ is uniformly continuous in $\overline{D(0,r_0)}$, there exists $\delta=\delta(\e)>0$ such that $|f(z)-f(w)|<\varepsilon^{\frac1q}$ for all $z,w\in\overline{D(0,r_0)}$ such that $|z-w|<\delta$. Therefore, if
 $\lambda,\zeta\in\overline{\D}$ with
 $|\lambda-\zeta|<\delta$, we have
		\begin{equation*}
		\begin{split}
		\|f_\lambda-f_\zeta\|_{AL^q_p(\om)}^q
		&\lesssim\int_{0}^{2\pi}\left(\int_{0}^{r_0}|(f_\lambda-f_\zeta)(se^{i\theta})|^p\omega(s)s\,ds\right)^{\frac{q}{p}}\,d\theta\\
		&\quad+\int_{0}^{2\pi}\left(\int_{r_0}^1|(f_\lambda-f_\zeta)(se^{i\theta})|^p\omega(s)s\,ds\right)^{\frac{q}{p}}\,d\theta 			\lesssim\varepsilon,
		\end{split}
		\end{equation*}
and (ii) follows.
Since (ii) implies (iii) by standard arguments, the theorem is proved.	
\end{Prf}

	\medskip

If $\om,\nu:[0,1)\to[0,\infty)$ are integrable and satisfy $\widehat{\om}\lesssim\widehat\nu$ on $[\rho,1)$, then an integration by parts shows that  
	\begin{equation}\label{trivial}
	\int_\rho^1\varphi(r)\om(r)\,dr\lesssim\int_\rho^1\varphi(r)\nu(r)\,dr
	\end{equation}
for all non-decreasing functions $\varphi:[0,1)\to[0,\infty)$, see \cite[Lemma~8]{PGRV2024} for details. This observation serves us in several instances in the sequel.

\medskip
\begin{Prf}{\em{ Theorem~\ref{th:samespaceintro 1}.}}
Since $z\in\Gamma_M(\xi)$ if and only if $z\in\Gamma_{KM}((|z|+(1-|z|)/K)\xi)$ for all $\xi\in\T$, $0<M<\infty$ and $1<K<\infty$, we have $\Gamma\left(\xi\right)\cap D(0,r)\subset\Gamma_2\left(\frac{1+r}{2}\xi\right)$ for all $0<r<1$ and $\xi\in\T$. Therefore 
	\begin{equation*}
	\begin{split}
	\|f\|_{AT^q_p(\om)}^q
	&\lesssim\int_0^{2\pi}\left(\int_0^1N_2(f)^p\left(\frac{1+r}{2}e^{i\theta}\right)\om(r)r\,dr\right)^\frac{q}{p}d\theta\\
	&=\int_0^{2\pi}\left(\int_{\frac12}^1N_2(f)^p\left(te^{i\theta}\right)\om(2t-1)(2t-1)2\,dt\right)^\frac{q}{p}d\theta,\quad f\in\H(\D),
	\end{split}
	\end{equation*}
where the last weight satisfies
	$$
	\int_\rho^1\om(2t-1)(2t-1)2\,dt=\int_{2\rho-1}^1\om(r)r\,dr\lesssim\int_\rho^1\om(r)r\,dr,\quad \frac12\le\rho<1,
	$$
by the hypothesis $\om\in\DD$. Hence \eqref{trivial} and Theorem~\ref{th:radialmaximal} yield
	\begin{equation*}
	\begin{split}
	\|f\|_{AT^q_p(\om)}^q
	\lesssim\int_0^{2\pi}\left(\int_\frac12^1N_2(f)^p\left(re^{i\theta}\right)\om(r)r\,dr\right)^\frac{q}{p}d\theta
	\le\|N_2(f)\|_{L^q_p(\om)}^q
	\lesssim\|f\|_{AL^q_p(\om)}^q,\quad f\in\H(\D),
	\end{split}
	\end{equation*}
and thus $AL^q_p(\om)\subset AT^q_p(\om)$.

To see that $AT^q_p(\om)\subset AL^q_p(\om)$, write
	$$
	\Phi_{f,M,p}(re^{i\theta})=\frac{1}{1-r}\int_{|t-\theta|<1-r}N_M(f)^p(re^{it})\,dt,\quad 0\le r<1,\quad 1\le M<\infty,
	$$
for short. For each $-M< K<M$ we have
	$$
	N_M(f)(re^{i(K(1-r)+\theta)})\le N_M(f)(se^{i(K(1-s)+\theta)}),\quad 0\le r\le s<1,\quad \theta\in\mathbb{R},
	$$
and hence
	\begin{equation}\label{Eq:increasing}
	\begin{split}
	\Phi_{f,M,p}(re^{i\theta})
	&\le\frac{1}{1-r}\int_{|t-\theta|<1-r}N_M(f)^p(se^{i((t-\theta)\frac{1-s}{1-r}+\theta)})\,dt\\
	&=\Phi_{f,M,p}(se^{i\theta}),\quad 0\le r\le s<1,\quad \theta\in\mathbb{R},
	\end{split}
	\end{equation}
that is, $r\mapsto\Phi_{f,M,p}(re^{i\theta})$ is non-decreasing on $[0,1)$ for each fixed $\theta\in\R$, $0<p<\infty$,  $1\le M<\infty$ and $f:\D\to\C$. Since $re^{i\theta}\in\Gamma\left(\frac{1+r}{2}e^{it}\right)$ whenever $|\theta-t|<1-\frac{1+r}{2}$, we have
	\begin{equation*}
	\begin{split}
	\|f\|_{AL^q_p(\om)}^q
	&\lesssim\int_0^{2\pi}\left(\int_0^1\Phi_{f,1,p}\left(\frac{1+r}{2}e^{i\theta}\right)\om(r)r\,dr\right)^\frac{q}{p}\,d\theta.
	\end{split}
	\end{equation*}
Now that $r\mapsto\Phi_{f,1,p}\left(\frac{1+r}{2}e^{i\theta}\right)$ is non-decreasing, we may proceed as in the proof of the inclusion $AL^q_p(\om)\subset AT^q_p(\om)$ to obtain
	\begin{equation*}
	\begin{split}
	\|f\|_{AL^q_p(\om)}^q
	&\lesssim\int_0^{2\pi}\left(\int_0^1\Phi_{f,1,p}\left(re^{i\theta}\right)\om(r)r\,dr\right)^\frac{q}{p}\,d\theta
	=\|N(f)\|_{T^q_p(\om)}.
	\end{split}
	\end{equation*}
Theorem~\ref{th:tentmaximal} now completes the proof of $AT^q_p(\om)\subset AL^q_p(\om)$.
\end{Prf}

\medskip

Our next goal is to use \eqref{AAA} and \eqref{BBB} to obtain another equivalent norms in $AL^q_p(\om)$ and $AT^q_p(\om)$, provided $\om\in\DDD$. To do this, we will need three basic lemmas on doubling weights. For each $1<K<\infty$ and a radial weight~$\omega$, consider the sequence defined by
	\begin{equation}\label{eq:rhon}
  \rho_n=\rho_{n}(\omega,K)=\min \lbrace 0\leq r <1 \, : \widehat{\omega}(r) = \widehat{\omega}(0)K^{-n}\rbrace,\quad n\in\N\cup\{0\}.
	\end{equation}
It is strictly increasing, $\rho_0=0$ and $\lim_{n\to\infty}\rho_{n}=1$. For a proof of the following result, see \cite[Lemma~2.1]{PelSum14}.

\begin{lemma}\label{Lem_D_hat}
Let $\omega$ be a radial weight. Then the following statements are equivalent:
\begin{enumerate}
    \item[\rm(i)] $\omega \in \widehat{\mathcal{D}}$;
    \item[\rm(ii)] There exist $K= K(\omega)>1$ and $C=C(\omega, K)>1$ such that
    \[
    1-\rho_n(\omega, K) \geq C (1- \rho_{n+1}(\omega, K)),\quad n \in \mathbb{N} \cup \lbrace 0 \rbrace;
    \] 
    \item[\rm(iii)] There exist $C=C(\omega)\geq 1$ and $\beta_{0}=\beta_{0}(\omega)>0$ such that
    \[
    \widehat{\omega}(r)\leq C\left( \frac{1-r}{1-t}\right)^{\beta}\widehat{\omega}(t), \hspace{4mm} 0\leq r \leq t <1,
    \]
    for all $\beta\ge\beta_{0}$;
     \item[\rm(iv)] There exist $C=C(\omega)\geq 1$ and $\eta_{0}=\eta_{0}(\omega)>0$ such that
    \[
   \int_{\D} \frac{\omega(\zeta)}{|1-z\zeta|^\eta}\,dA(\zeta)\le C\frac{\omg(z)}{(1-|z|)^{\eta-1}},\quad z\in\D,
    \]
    for all $\eta\ge\eta_{0}$.
\end{enumerate}
\end{lemma}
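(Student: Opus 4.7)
The plan is to take (i) as the pivot and establish (i) $\Leftrightarrow$ (ii), (i) $\Leftrightarrow$ (iii), and (i) $\Leftrightarrow$ (iv) in turn. The common engine is iteration of the doubling inequality on $\widehat{\omega}$, supplemented for (iv) by the classical asymptotic
\begin{equation*}
\int_0^{2\pi} \frac{d\theta}{|1-\rho e^{i\theta}|^\eta} \asymp \frac{1}{(1-\rho)^{\eta-1}}, \quad 0 \le \rho < 1,\ \eta > 1,
\end{equation*}
which reduces the two-dimensional integral in (iv) to a radial one involving only $\omega$.

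For (i) $\Leftrightarrow$ (ii) I would iterate (i) $n$ times to obtain $\widehat{\omega}(r)\le C^n\widehat{\omega}(1-(1-r)/2^n)$; choosing $K\ge C$ in the definition of $\rho_n$ and exploiting monotonicity of $\widehat{\omega}$ produces $(1+\rho_n)/2\le\rho_{n+1}$, which is (ii) with constant $2$. Conversely, given $r\in[\rho_n,\rho_{n+1})$, iterating (ii) a bounded number $m=m(C)$ of times yields $(1+r)/2\le\rho_{n+m}$, and (i) follows with constant $K^m$. For (i) $\Leftrightarrow$ (iii), the same iteration with $k$ selected so that $2^{-k}\le(1-t)/(1-r)<2^{-k+1}$ gives the power bound in (iii) for every $\beta\ge\beta_0:=\log_2 C$; the reverse direction is immediate upon setting $t=(1+r)/2$.

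For (i) $\Rightarrow$ (iv), the polar reduction leaves me with $\int_0^1\omega(s)s(1-s|z|)^{1-\eta}\,ds$, which I would split at $s=|z|$. On $[|z|,1]$ the bound $(1-s|z|)\ge 1-|z|$ yields $\widehat{\omega}(|z|)(1-|z|)^{1-\eta}$ directly. On $[0,|z|]$ a dyadic decomposition into intervals $s\in[1-(1-|z|)2^{k+1},\,1-(1-|z|)2^k]$ combined with (iii) bounds each piece by $C(1-|z|)^{1-\eta}\,2^{k(\beta+1-\eta)}\widehat{\omega}(|z|)$, and provided $\eta_0$ is chosen with $\eta_0>\beta_0+1$ the resulting geometric series sums to a constant. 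For the converse (iv) $\Rightarrow$ (i), I would apply (iv) at a point $z$ with $|z|=(1+r)/2$ and restrict the integral to the polar box $\{\zeta : r\le|\zeta|\le (1+r)/2,\ |\arg\zeta-\arg z|\lesssim 1-r\}$, where an elementary computation gives $|1-z\zeta|\asymp 1-r$. The restricted integral is then of order $(1-r)^{1-\eta}(\widehat{\omega}(r)-\widehat{\omega}((1+r)/2))$, which when confronted with the upper bound $\asymp(1-r)^{1-\eta}\widehat{\omega}((1+r)/2)$ from (iv) produces $\widehat{\omega}(r)\le C\,\widehat{\omega}((1+r)/2)$, i.e.\ (i). The main obstacle I anticipate is the quantitative bookkeeping in (i)~$\Rightarrow$~(iv), where $\eta_0$ must be tuned via $\beta_0$ so that the polynomial decay supplied by (iii) uniformly dominates the kernel's singularity across the range $|z|\to 1^-$.
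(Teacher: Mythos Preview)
Your argument is correct, and indeed the paper does not supply a proof of this lemma at all: immediately before the statement it writes ``For a proof of the following result, see \cite[Lemma~2.1]{PelSum14}.'' So there is no in-paper proof to compare against; the authors simply import the result from Pel\'aez's survey. Your proposed route---iterate the doubling condition on $\widehat\omega$ for (i)$\Leftrightarrow$(ii) and (i)$\Leftrightarrow$(iii), then combine (iii) with the standard angular asymptotic $\int_0^{2\pi}|1-\rho e^{i\theta}|^{-\eta}d\theta\asymp(1-\rho)^{1-\eta}$ and a dyadic radial decomposition for (i)$\Leftrightarrow$(iv)---is exactly the method carried out in that reference, so you are reproducing the cited proof rather than offering an alternative. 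One small bookkeeping point worth making explicit in (i)$\Rightarrow$(iv): after handling the annulus $|z|\le s<1$ and the dyadic shells in $[\tfrac12,|z|]$, the residual range $s\in[0,\tfrac12]$ contributes a quantity bounded by a constant, and this is indeed dominated by $\widehat\omega(z)(1-|z|)^{1-\eta}$ because (iii) forces $\widehat\omega(z)\gtrsim(1-|z|)^{\beta_0}$ and you have taken $\eta>\beta_0+1$; you allude to this tuning at the end, but it is precisely what closes that piece.
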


The next result is a counterpart of Lemma~\ref{Lem_D_hat} for the class $\Dd$, and it is obtained by methods similar to those yielding the said lemma.

\begin{lemma}\label{Lem_D_check}
Let $\omega$ be a radial weight. Then the following statements are equivalent:
    \begin{enumerate}
    \item[\rm(i)] $\omega\in\widecheck{\mathcal{D}}$;
    \item[\rm(ii)] For some (equivalently for each) $K>1$, there exist $C=C(\omega,K)>0$ such that
    \[
    1-\rho_n(\omega,K)\le C(1-\rho_{n+1}(\omega,K)),\quad n \in \mathbb{N} \cup \lbrace 0 \rbrace;
    \]
    \item[\rm(iii)] There exist $C=C(\omega)>0$ and $\alpha_{0}=\alpha_{0}(\omega)>0$ such that
    \[
    \widehat{\omega}(s)\le C\left( \frac{1-s}{1-t}\right)^{\alpha}\widehat{\omega}(t), \quad 0\le t\le s<1,
    \]
for all $0<\alpha\le\alpha_{0}$.
    \end{enumerate}
\end{lemma}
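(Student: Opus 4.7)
The plan is to establish the cyclic chain (i)$\Rightarrow$(ii)$\Rightarrow$(iii)$\Rightarrow$(i), which mirrors the strategy that must underlie Lemma~\ref{Lem_D_hat}. All three conditions say the same thing: the tail $\omg(r)$ decays polynomially in $(1-r)$ from below, at least on the geometric grid $\{\rho_n\}$. The key observation I will lean on throughout is that $\omg$ is continuous and strictly decreasing, so we may invert inequalities on $\omg(r)$ and $\omg(s)$ into inequalities on $1-r$ and $1-s$ whenever both sides can be compared to values $\omg(\rho_n)=\omg(0)K^{-n}$.

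For (i)$\Rightarrow$(ii), fix $K>1$ and use the hypothesis to obtain $K_0,C_0>1$ with $\omg(r)\ge C_0\,\omg(1-(1-r)/K_0)$ on $[\rho,1)$. Iterating this $m$ times gives $\omg(r)\ge C_0^m\,\omg(1-(1-r)/K_0^m)$, and I will choose $m=m(K)$ so that $C_0^m\ge K$. Applied at $r=\rho_n$ this forces $\omg(1-(1-\rho_n)/K_0^m)\le\omg(\rho_n)/K=\omg(\rho_{n+1})$; the monotonicity of $\omg$ then gives $1-\rho_{n+1}\ge (1-\rho_n)/K_0^m$, i.e.\ (ii) with $C=K_0^m$. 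To pass from ``for some $K$'' to ``for each $K$'', I will interleave the grids associated to two different values $K,K'>1$: any $\rho_n(K')$ lies between two consecutive $\rho_j(K),\rho_{j+1}(K)$ with $j+1-n$ bounded by a constant depending only on $K,K',\omega$, which is a routine bookkeeping argument.

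For (ii)$\Rightarrow$(iii), given $0\le t\le s<1$ pick the indices with $\rho_n\le t<\rho_{n+1}$ and $\rho_m\le s<\rho_{m+1}$, so $m\ge n$. Then $\omg(s)/\omg(t)\le K^{-(m-n-1)}$, while iterating (ii) yields $(1-\rho_{m+1})/(1-\rho_n)\ge C^{-(m+1-n)}$ and hence $(1-s)/(1-t)\ge C^{-(m+1-n)}$. Choosing $\alpha_0=\log K/\log C$, for every $0<\alpha\le\alpha_0$ the ratio $K^{-(m-n-1)}$ is dominated by a constant multiple of $C^{-\alpha(m+1-n)}\le ((1-s)/(1-t))^{\alpha}$, which is exactly (iii). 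Finally, for (iii)$\Rightarrow$(i), apply the inequality at $t=r$ and $s=1-(1-r)/K$ to get $\omg(1-(1-r)/K)\le C\,K^{-\alpha}\,\omg(r)$, and pick $K$ so large that $C\,K^{-\alpha}<1$; this is precisely the one-sided doubling condition defining $\Dd$. The only delicate point is the first step, which handles quantifier subtleties in (ii) — everything else is routine given monotonicity and the geometric placement of the $\rho_n$.
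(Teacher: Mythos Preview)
Your cyclic argument (i)$\Rightarrow$(ii)$\Rightarrow$(iii)$\Rightarrow$(i) is correct and is precisely the ``methods similar to those yielding'' Lemma~\ref{Lem_D_hat} that the paper invokes in lieu of a written proof. One small imprecision: $\omg$ need not be \emph{strictly} decreasing (the weight $\omega$ may vanish on intervals), but your conclusion $1-(1-\rho_n)/K_0^m\ge\rho_{n+1}$ in the step (i)$\Rightarrow$(ii) still follows from the fact that $\rho_{n+1}$ is defined as the \emph{minimum} $r$ with $\omg(r)=\omg(0)K^{-n-1}$, combined with the monotonicity and continuity of $\omg$; also note that the interleaving argument you sketch for ``some $K$ $\Leftrightarrow$ each $K$'' in (ii) is in fact redundant, since your (i)$\Rightarrow$(ii) already establishes (ii) for an arbitrary $K>1$.
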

		
The equivalence between (i) and (ii) in the next lemma is an immediate consequence of Lemmas~\ref{Lem_D_hat} and \ref{Lem_D_check}. The other equivalences follow from \cite[Theorems~8 and 9]{PR2020} and \cite[Lemma~9(vii)(viii)]{PGRV2024}.

\begin{lemma}\label{Lem_D}
    Let $\omega$ be a radial weight. Then the following statements are equivalent:
    \begin{enumerate}
        \item[\rm(i)] $\omega\in \mathcal{D}; $
        \item[\rm(ii)] There exist $K=K(\omega)>1$, $C_1=C_1(\omega,K)>1$ and  $C_2=C_2(\omega,K)\geq C_1$ such that
        \[
        C_1(1- \rho_{n+1}(\omega, K))\leq 1- \rho_n(\omega, K) \leq C_{2}(1- \rho_{n+1}(\omega, K)),\quad n \in \mathbb{N} \cup \lbrace 0 \rbrace;
        \]
        \item[\rm(iii)] $\widehat{\omega}\asymp \widehat{\widetilde{\omega}}$ on $[0,1)$;
        \item[\rm(iv)] $\widetilde{\omega}\in\mathcal{D}$.
    \end{enumerate}
\end{lemma}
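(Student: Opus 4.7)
The plan is to establish the four-way equivalence by using Lemmas~\ref{Lem_D_hat} and~\ref{Lem_D_check} as the main engine, and then bootstrap via a direct integral computation on $\widehat{\widetilde{\omega}}$. I will prove the circuit (i)$\Leftrightarrow$(ii), (i)$\Leftrightarrow$(iii), and (iii)$\Leftrightarrow$(iv), which suffices.

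\medskip
\noindent\textbf{Step 1: (i)$\Leftrightarrow$(ii).} This is essentially a bookkeeping step. Lemma~\ref{Lem_D_hat}(ii) says $\omega\in\DD$ is equivalent to a lower bound $1-\rho_n\ge C_1(1-\rho_{n+1})$ holding for some $K>1$, and Lemma~\ref{Lem_D_check}(ii) says $\omega\in\Dd$ is equivalent to an upper bound $1-\rho_n\le C_2(1-\rho_{n+1})$ holding \emph{for each} $K>1$. Since $\omega\in\mathcal{D}=\DD\cap\Dd$, one picks the value of $K$ coming from Lemma~\ref{Lem_D_hat}(ii) and then invokes the ``for each $K>1$'' flexibility of Lemma~\ref{Lem_D_check}(ii) to obtain both bounds simultaneously. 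The converse is trivial.

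\medskip
\noindent\textbf{Step 2: (i)$\Rightarrow$(iii).} The identity $\widehat{\widetilde{\omega}}(r)=\int_r^1\widehat{\omega}(s)/(1-s)\,ds$ is the starting point. For the upper bound $\widehat{\widetilde{\omega}}\lesssim\widehat{\omega}$, I use Lemma~\ref{Lem_D_check}(iii) with some small $\alpha\le\alpha_0$:
$$
\widehat{\widetilde{\omega}}(r)\le C\,\widehat{\omega}(r)\int_r^1\frac{(1-s)^{\alpha-1}}{(1-r)^{\alpha}}\,ds=\frac{C}{\alpha}\,\widehat{\omega}(r).
$$
For the lower bound $\widehat{\omega}\lesssim\widehat{\widetilde{\omega}}$, I restrict the integral to $[r,(1+r)/2]$ and apply the defining inequality of $\DD$: $\widehat{\omega}(s)\ge\widehat{\omega}((1+r)/2)\ge\widehat{\omega}(r)/C$ on that interval, so $\widehat{\widetilde{\omega}}(r)\ge (\log 2/C)\widehat{\omega}(r)$.

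\medskip
\noindent\textbf{Step 3: (iii)$\Rightarrow$(i).} This is the delicate direction. Assuming $\widehat{\omega}\asymp\widehat{\widetilde{\omega}}$, I observe that $\widetilde{\omega}$ is absolutely continuous and strictly positive, so it automatically satisfies the one-sided bounds needed to test $\DD$ and $\Dd$ on $\widetilde{\omega}$ itself. The characterisations (iii) of Lemmas~\ref{Lem_D_hat} and \ref{Lem_D_check} depend only on $\widehat{\cdot}$, and since $\widehat{\omega}\asymp\widehat{\widetilde{\omega}}$ those characterisations hold for $\omega$ iff they hold for $\widetilde{\omega}$. Thus it suffices to show $\widetilde{\omega}\in\mathcal{D}$, which is standard because $\widetilde{\omega}(r)=\widehat{\omega}(r)/(1-r)$ is continuous, the function $r\mapsto\widehat{\widetilde{\omega}}(r)\asymp\widehat{\omega}(r)$ is non-increasing, and the definitions of $\DD,\Dd$ can be verified directly by plugging $r\mapsto(1+r)/2$ and $r\mapsto 1-(1-r)/K$ into the asymptotic relation.

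\medskip
\noindent\textbf{Step 4: (iii)$\Leftrightarrow$(iv).} Since (iii) is symmetric in $\omega\leftrightarrow\widetilde{\omega}$ up to iterating $\widetilde{\cdot}$, and $\widetilde{\widetilde{\omega}}=\widehat{\widetilde{\omega}}/(1-|\cdot|)\asymp\widehat{\omega}/(1-|\cdot|)=\widetilde{\omega}$ under (iii), the same argument as in Step~3 transfers the doubling property from $\omega$ to $\widetilde{\omega}$ and vice versa.

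\medskip
The main obstacle is Step~3: the hypothesis (iii) must be converted into a pointwise doubling condition for $\omega$ despite $\omega$ possibly being very irregular (vanishing on large sets, discontinuous). The trick is that all the conditions in play only see $\widehat{\omega}$, so the proof never touches $\omega$ pointwise and instead uses the $\widehat{\cdot}$-based criteria of Lemmas~\ref{Lem_D_hat} and~\ref{Lem_D_check} as a bridge.
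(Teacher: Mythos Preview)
Your Steps~1 and~2 are fine, and your observation in Step~3 that the conditions in Lemmas~\ref{Lem_D_hat}(iii) and~\ref{Lem_D_check}(iii) depend only on $\widehat{\cdot}$ (so that $\omega\in\DDD\Leftrightarrow\widetilde\omega\in\DDD$ under the hypothesis $\widehat\omega\asymp\widehat{\widetilde\omega}$) is exactly the right reduction. But the final clause of Step~3 is circular. You write that $\widetilde\omega\in\DDD$ ``can be verified directly by plugging $r\mapsto(1+r)/2$ and $r\mapsto 1-(1-r)/K$ into the asymptotic relation.'' Plugging into $\widehat\omega\asymp\widehat{\widetilde\omega}$ at those points only tells you that the $\DD$ (resp.\ $\Dd$) condition for $\widetilde\omega$ is \emph{equivalent} to the same condition for $\omega$; it does not establish either. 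Continuity and positivity of $\widetilde\omega$ are irrelevant here: $\widetilde\omega$ built from an exponential-type weight is perfectly smooth and positive yet fails $\DD$.

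What is actually needed in Step~3 is a direct extraction of the two doubling conditions from the two halves of the hypothesis (iii). The upper bound $\widehat{\widetilde\omega}\le C'\widehat\omega$ gives $\omega\in\Dd$ by the estimate
\[
C'\widehat\omega(r)\ge\int_r^{1-(1-r)/K}\frac{\widehat\omega(s)}{1-s}\,ds\ge\widehat\omega\!\left(1-\tfrac{1-r}{K}\right)\log K,
\]
valid for any $K>1$; choose $K$ with $\log K>C'$. For $\DD$ one uses the lower bound $c\,\widehat\omega\le\widehat{\widetilde\omega}$ together with the identity $(\widehat{\widetilde\omega})'=-\widehat\omega/(1-\cdot)$: the hypothesis becomes the differential inequality $-(1-r)(\widehat{\widetilde\omega})'(r)\le c^{-1}\widehat{\widetilde\omega}(r)$, and integrating $\big(\log\widehat{\widetilde\omega}\big)'$ yields $\widehat{\widetilde\omega}(r)\le\big(\tfrac{1-r}{1-t}\big)^{1/c}\widehat{\widetilde\omega}(t)$ for $r\le t$, which is precisely Lemma~\ref{Lem_D_hat}(iii) for $\widetilde\omega$. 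Then your transfer principle gives $\omega\in\DD$. The paper itself does not supply this argument either; it defers (i)$\Leftrightarrow$(iii)$\Leftrightarrow$(iv) to \cite{PR2020} and \cite{PGRV2024}. Your Step~4 inherits the same gap in the direction (iv)$\Rightarrow$(iii), since knowing $\widetilde\omega\in\DDD$ yields $\widehat{\widetilde\omega}\asymp\widehat{\widetilde{\widetilde\omega}}$, not $\widehat\omega\asymp\widehat{\widetilde\omega}$; one again needs an argument like the one above (or the already-established (iii)$\Leftrightarrow$(i)) to pass from $\widetilde\omega$ back to $\omega$.
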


\medskip

\noindent\emph{Proof of Theorem~\ref{th:samespace2}.} 
Assume (i), that is, $\om\in\DDD$. Then $\widehat{\widetilde{\om}}\asymp\widehat{\om}$ on $[0,1)$ by Lemma~\ref{Lem_D}(iii). Now that $r\mapsto\Phi_{f,M,p}(re^{i\theta})$ is non-decreasing by \eqref{Eq:increasing}, \eqref{trivial} and Theorem~\ref{th:tentmaximal} yield
	\begin{equation*}
	\begin{split}
	\|f\|_{AT^q_p(\widetilde\om)}
	\le\|\Phi_{f,M,p}\|^{\frac{1}{p}}_{L^{\frac{q}{p}}_1(\widetilde\om)}
	\lesssim\|\Phi_{f,M,p}\|^{\frac{1}{p}}_{L^{\frac{q}{p}}_1(\om)}
	=\|N_M(f)\|_{T^q_p(\om)}
	\lesssim\|f\|_{AT^q_p(\om)},\quad f\in\H(\D).
	\end{split}
	\end{equation*}
Since this argument readily gives the converse implication when the roles of $\om$ and $\widetilde{\om}$ are interchanged, we have $\|f\|_{AT^q_p(\widetilde\om)}\asymp\|f\|_{AT^q_p(\om)}$ for all $f\in\H(\D)$. Thus (ii) is satisfied. The proof that (i) also implies (iii) is similar to the argument above, and hence it is omitted.

Conversely, if (ii) or (iii) is satisfied, then by testing with the monomials $z\mapsto z^n$, one obtains the moment condition $\om_{x}=\int_0^1r^{x}\om(r)\,dr\asymp\widetilde\om_{x}$ for all $1\le x<\infty$. Standard arguments together with \cite[Theorems~1 and 3]{PGRV2024} and Lemma~\ref{Lem_D} yield $\om\in\DDD$.
\hfill$\Box$

\medskip 

Aiming to prove Theorem~\ref{th:counterexaple} some notation and previous results are needed.
A radial weight $\omega$ is rapidly decreasing if it satisfies the following conditions:
\begin{enumerate}
\item $\omega=e^{-\varphi}$, where $\varphi\in C^2(\D)$ is a radial function such that its Laplacian satisfies $\Delta\varphi\ge B_{\varphi}>0$ in $\D$ for some positive constant $B_{\varphi}$ depending only on $\varphi$; 
\item $\left(\Delta\varphi\right)^{-1/2}\asymp\tau$, where $\tau$ is a radial positive differentiable function that decreases to $0$, as
$|z|\rightarrow 1^{-}$, and $\lim_{r\to 1^-}\tau'(r)=0$;
\item There exists a constant $C>0$  such that either $\tau(r)(1-r)^{-C}$ is increasing for $r$ close to $1$, or
	$$
	\lim_{r\to 1^-}\tau'(r)\log\frac{1}{\tau(r)}=0.
	$$
\end{enumerate}
The class of rapidly decreasing weights is denoted by $\mathcal{W}$. This class does not include the standard weights, but it contains, for example, the exponential type weights
	\begin{displaymath}
	z\mapsto\exp\left(\frac{-c}{(1-|z|)^\alpha}\right),\quad z\in\D,
	\end{displaymath}
and the double exponential type weights
	\begin{displaymath}
	z\mapsto\exp\left(\exp\left(\frac{-c}{1-|z|}\right)\right),\quad z\in\D,
	\end{displaymath}
where $0<c,\alpha<\infty$ are fixed.

Let $B^\om_a$ denote the Bergman reproducing kernel of $A^2_\om$ associated to a point $a\in\D$. In the next result, we gather together some known facts on the Bergman spaces induced by weights in $\mathcal{W}$ that are useful for our purpose.

\begin{letterlemma}\label{th:W}
Let $\om\in\mathcal{W}$. Then the following statements hold:
\begin{enumerate}
\item[\rm(i)] There exists $\rho_0\in (0,1)$ such that $\| B^\om_a\|^2_{A^2_\omega}\asymp\tau(a)^{-2}\om(a)^{-1}$ for all $\rho_0\le |a|<1$;
\item[\rm(ii)] There exists $\delta>0$ such that
	$$
	| B^\om_a(z)|\asymp \| B^\om_a\|_{A^2_\omega}\| B^\om_z\|_{A^2_\omega}, \quad z\in D(a,\delta\tau(a)),\quad a\in\D;
	$$
\item[\rm(iii)] There exists $\delta>0$ such that $\tau(z)\asymp\tau(a)$ for all $z\in D(a,\delta\tau(a))$ and $a\in\D$;
\item[\rm(iv)] $\lim_{r\to 1^{-}}\frac{1-r}{\tau(r)}=\infty$.
\end{enumerate}
\end{letterlemma}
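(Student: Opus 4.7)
The plan is to observe that Lemma~\ref{th:W} is essentially a compilation of known facts about Bergman spaces induced by weights in the class $\mathcal{W}$, so the proof reduces to locating each assertion in the existing literature and, for the one statement that is purely elementary, verifying it directly from the defining conditions of $\mathcal{W}$.

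For parts (i) and (ii), I would cite the work on Bergman spaces with rapidly decreasing weights developed in \cite{Pau-Pelaez:JFA2010} and \cite{ATTY} (see also \cite{Atesis}). Specifically, the norm estimate $\|B^\om_a\|^2_{A^2_\om}\asymp\tau(a)^{-2}\om(a)^{-1}$ for $a$ near the boundary is obtained from two-sided estimates on the reproducing kernel that ultimately follow from $\bar\partial$-techniques adapted to the weight $\om=e^{-\varphi}$ with $\Delta\varphi\asymp\tau^{-2}$. The pointwise estimate in (ii) is the standard localization of the kernel on its natural scale $\tau(a)$, again established in the same references via off-diagonal decay estimates and the reproducing property on $A^2_\om$.

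For part (iii), the quasi-invariance $\tau(z)\asymp\tau(a)$ on $D(a,\delta\tau(a))$ is a direct consequence of condition (3) in the definition of $\mathcal{W}$ together with $\tau'(r)\to0$ as $r\to1^-$. I would give a one-line argument: using the mean value theorem, $|\tau(z)-\tau(a)|\le\delta\tau(a)\sup|\tau'|$ on the relevant annulus, and condition (3) forces this supremum to be so small that the ratio $\tau(z)/\tau(a)$ stays between two positive constants; the detailed verification is already carried out in \cite[Lemma~A]{Pau-Pelaez:JFA2010}.

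Part (iv) is the only assertion I would prove from scratch, since it is elementary. Since $\tau$ decreases to $0$ and $\lim_{r\to1^-}\tau'(r)=0$, writing
	$$
	\tau(r)=\tau(r)-\lim_{s\to1^-}\tau(s)=-\int_r^1\tau'(s)\,ds=\int_r^1|\tau'(s)|\,ds
	$$
and bounding the integrand by its supremum yields
	$$
	\frac{\tau(r)}{1-r}\le\sup_{r\le s<1}|\tau'(s)|\xrightarrow{r\to1^-}0,
	$$
which gives (iv). The main ``obstacle'', if any, is simply tracking down the precise form of (i) and (ii) in the cited papers and matching their normalization conventions with the ones used here; there is no new mathematical content to establish.
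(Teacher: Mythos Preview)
Your proposal is correct and follows essentially the same approach as the paper: parts (i)--(iii) are dispatched by citation (the paper points to \cite[Corollary~1]{Pau-Pelaez:JFA2010} for (i), \cite[Lemma~2.1]{Pau-Pelaez:JFA2010} for (iii), and \cite[(18)]{HLSJFA} together with \cite{Atesis} for (ii)), and part (iv) is obtained directly from $\lim_{r\to1^-}\tau'(r)=0$. Your integral argument for (iv) is in fact more explicit than the paper's one-line remark; the only minor slip is that in your sketch for (iii) the smallness of $\sup|\tau'|$ comes from condition (2) of the definition of $\mathcal{W}$ (where $\tau'(r)\to0$ is stated), not condition (3).
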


\begin{proof}
Parts (i) and (iii) are proved in \cite[Corollary~1]{Pau-Pelaez:JFA2010} and \cite[Lemma~2.1]{Pau-Pelaez:JFA2010}, respectively. Next, 
bearing in mind the hypothesis $\lim_{r\to 1^-}\tau'(r)=0$, it follows that $\varphi=\log\frac{1}{\om}\in\mathcal{W}_0$, where $\mathcal{W}_0$ is the class of functions considered in \cite{ATTY,HLSJFA}. Hence Part (ii) follows from \cite[(18)]{HLSJFA} and Part(i), see also \cite[Lemma~E]{Atesis}. Finally, (iv) follows from the fact that $\lim_{r\to 1^-}\tau'(r)=0$.
\end{proof}

\begin{lemma}\label{le:Walpha}
Let $\mu\in\mathcal{W}$ and $\a>0$. Then there exists $C=C(\alpha,\mu)>0$ such that 
	$$
	\om(r)=e^{-Cr^2}\frac{\mu(r)}{(1-r^2)^\alpha} \in \mathcal{W}.
	$$
\end{lemma}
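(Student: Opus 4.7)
The plan is to write $\mu=e^{-\varphi_\mu}$ with associated auxiliary function $\tau_\mu$ from the definition of $\mathcal{W}$, and then express
$$
\omega(z) = e^{-\varphi_\omega(z)},\qquad \varphi_\omega(z) := C|z|^2 + \varphi_\mu(z) + \alpha\log(1-|z|^2),\quad z\in\D,
$$
which is manifestly $C^2$ and radial on $\D$. I would then check that for $C=C(\alpha,\mu)>0$ chosen sufficiently large, $\varphi_\omega$ satisfies conditions (1)--(3) of the definition of $\mathcal{W}$ with the \emph{same} auxiliary function $\tau_\omega:=\tau_\mu$. This choice of $\tau_\omega$ renders condition (3) and the regularity requirements on $\tau_\omega$ free of charge, since they are inherited literally from $\mu$.

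The key computation is the Laplacian. Using $\Delta|z|^2=4$ and $\Delta\log(1-|z|^2)=-4/(1-|z|^2)^2$, I would obtain
$$
\Delta\varphi_\omega(z) \;=\; 4C + \Delta\varphi_\mu(z) - \frac{4\alpha}{(1-|z|^2)^2}.
$$
Since $\Delta\varphi_\mu\asymp\tau_\mu^{-2}$ by hypothesis and Lemma~\ref{th:W}(iv) gives $(1-r)^{-2}=o(\tau_\mu(r)^{-2})$ as $r\to 1^-$, there exists $r_0\in(0,1)$ such that $4\alpha/(1-|z|^2)^2\le\tfrac12\Delta\varphi_\mu(z)$ for $|z|\ge r_0$. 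On that annular region one then has $\Delta\varphi_\omega\ge\tfrac12\Delta\varphi_\mu\asymp\tau_\mu^{-2}$. On the compact disk $\{|z|\le r_0\}$ the subtracted term is bounded above while $\Delta\varphi_\mu\ge B_{\varphi_\mu}>0$; so choosing $C$ large enough that $4C$ absorbs the maximum of $4\alpha/(1-|z|^2)^2$ on $\{|z|\le r_0\}$ yields $\Delta\varphi_\omega\ge B_{\varphi_\mu}>0$ there. This establishes condition (1) globally, and it also delivers the equivalence $(\Delta\varphi_\omega)^{-1/2}\asymp\tau_\mu=\tau_\omega$ on $\D$: near $\partial\D$ from $\Delta\varphi_\omega\asymp\Delta\varphi_\mu\asymp\tau_\mu^{-2}$, and on the compact portion because both quantities are bounded above and below by positive constants. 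Thus (2) holds, and as noted above, (3) together with the regularity and decay of $\tau_\omega$ are inherited directly from $\mu$.

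I expect the only real obstacle to be the comparison near the boundary: the negative contribution $-4\alpha/(1-|z|^2)^2$ blows up like $(1-r)^{-2}$, so without Lemma~\ref{th:W}(iv) guaranteeing $\tau_\mu(r)=o(1-r)$, the term $\Delta\varphi_\mu\asymp\tau_\mu^{-2}$ would not necessarily dominate it. The Gaussian factor $e^{-C|z|^2}$ plays essentially no role near $\partial\D$, contributing only the bounded constant $4C$ to the Laplacian; its sole purpose is to secure positivity of $\Delta\varphi_\omega$ on the compact part of $\D$, where the standard-type factor $(1-|z|^2)^{-\alpha}$ might otherwise overwhelm the baseline lower bound $B_{\varphi_\mu}$.
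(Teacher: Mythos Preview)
Your proposal is correct and follows essentially the same route as the paper: write $\omega=e^{-\varphi_\omega}$ with $\varphi_\omega=C|z|^2+\varphi_\mu+\alpha\log(1-|z|^2)$, compute $\Delta\varphi_\omega$, use Lemma~\ref{th:W}(iv) to see that the negative term $-4\alpha/(1-|z|^2)^2$ is negligible compared with $\Delta\varphi_\mu\asymp\tau_\mu^{-2}$ near $\partial\D$, and fix $C$ large to handle the compact part, so that $(\Delta\varphi_\omega)^{-1/2}\asymp\tau_\mu$ and conditions (2)--(3) are inherited. The only item the paper records that you omit is the verification that $\omega$ is actually a weight (integrable over $\D$); the paper notes $\lim_{r\to1^-}\omega(r)=0$ via \cite[Lemma~2.3]{Pau-Pelaez:JFA2010} and Lemma~\ref{th:W}(iv), which suffices since $\omega$ is then continuous and bounded on $[0,1)$.
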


\begin{proof}
By \cite[Lemma~2.3]{Pau-Pelaez:JFA2010} and Lemma~\ref{th:W}(iv), $\lim_{r \to 1^-}\om(r)=0$. In particular, $\om$ is a radial weight. Moreover, $\om=e^{-\Psi}$ with 
$$ \Psi(r) =Cr^2+\varphi(r)+\alpha\log(1-r^2), \quad 0\le r<1,$$
where     $\mu=e^{-\varphi}$. Therefore
$$\Delta \Psi(r) = 4C+ \Delta \varphi(r)- \alpha\left( \frac{2}{1-r^2}+ \frac{2(1+r^2)}{(1-r^2)^2} \right), \quad 0<r<1.$$
Then,  by using   Lemma~\ref{th:W}(iv) again we deduce
$$\lim_{r\to 1^-} \frac{\Delta \Psi(r) }{\Delta \varphi(r) }=1.$$
Consequently, $C$ can be chosen large enough so that $\inf_{0<r<1}\Delta \Psi(r)>0$. Moreover, the  differentiable function $\tau$ such that  $\left(\Delta\varphi\right)^{-1/2}\asymp\tau$ 
also satisfies $\left(\Delta\Psi\right)^{-1/2}\asymp\tau$. Thus $\om\in \mathcal{W}$.
\end{proof}

\begin{Prf}{\em{Theorem~\ref{th:counterexaple}.}}
Let $p=1$ and  $q=2$. By Lemma~\ref{le:Walpha}, there exists $C=C(\mu)>0$ such that  $\om(r)=e^{-Cr^2}\frac{\mu(r)}{(1-r^2)^{1/2}} \in \mathcal{W}.$ In order to prove that $AT_1^2(\mu)$ is not embedded into $AL^2_1(\mu)$ it is enough to show that $I:A^1_\om\to AT_1^2(\mu)$ is bounded but $I:A^1_\om\to AL_1^2(\mu)$ is not. Let us write $\om=e^{-\varphi}$ and let $\tau$ be a
differentiable function  such that  $\left(\Delta\varphi\right)^{-1/2}\asymp\tau$.
 Bearing in mind that $\lim_{r\to 1^-}\tau'(r)=0$ by the hypothesis, it follows that $\varphi=\log\frac{1}{\om}\in\mathcal{W}_0$, where $\mathcal{W}_0$ is the class of functions considered in \cite{ATTY,HLSJFA}. Consequently, by \cite[Theorem~3.3]{ATTY}, $I: A^1_\om\to  AT_1^2(\mu)$ is bounded because for each $\delta>0$ small enough
	$$
	\sup_{a\in\D}\frac{\int_{D(a,\delta\tau(a))} \om^{-1} \mu \,dA}{(1-|a|)^{\frac12}\tau^2(a)}
	\asymp\sup_{a\in\D}\frac{\int_{D(a,\delta\tau(a))}(1-|z|)^{\frac12} \,dA(z)}{(1-|a|)^{\frac12}\tau^2(a)}\asymp 1.
	$$
On the other hand, for some $\delta>0$ small enough such that Lemma~\ref{th:W}(ii)-(iii) holds, take $\delta_1>0$ and intervals
$J_a\subset \T$,  $I_a=[a-\delta_1\tau(a), a+\delta_1\tau(a)]  \subset [0,1)$ with  $|I_a|=|J_a|$ and 
$I_a\times J_a\subset D(a,\delta\tau(a))$. Next, consider the family of analytic functions $f_a(z)=\left( \frac{B^\omega_a(z)}{\| B^\om_a\|_{A^2_\omega}}\right)^2$, $\, a \in\D$. Then Lemma~\ref{th:W}(i)--(iii) yields  
	\begin{equation*}
	\begin{split}
	\|f_a\|^2_{ AL_1^2(\mu)}
	&\ge\int_{J_a}\left(\int_{I_a}|f_a(re^{it})|\mu(r)r\,dr\right)^2\,dt
	=\frac{\int_{J_a}\left(\int_{I_a}|B^\omega_a(re^{it})|^2\mu(r)r\,dr\right)^2\,dt}{\| B^\om_a\|_{A^2_\omega}^4}\\
	&\asymp\frac{\int_{J_a}\left(\int_{I_a}\left(\|B^\om_a\|_{A^2_\omega}\|B^\om_{re^{it}}\|_{A^2_\omega}\right)^2\mu(r)r\,dr\right)^2\,dt}{\| B^\om_a\|_{A^2_\omega}^4}
	=\int_{J_a}\left( \int_{I_a}\| B^\om_{re^{it}}\|_{A^2_\omega}^2\mu(r)r\,dr\right)^2\,dt\\
	&\asymp\tau(a)\left( \int_{I_{a}}(1-r)^{\frac12}\tau(r)^{-2}  \,dr\right)^2
 \asymp \frac{1-|a|}{\tau(a)},\quad \rho_0\le|a|<1.
\end{split}
\end{equation*}
Therefore Lemma~\ref{th:W}(iii) implies  $\sup_{a\in\D}\|f_a\|_{ AL_1^2(\mu)}=\infty$, which together with the fact $\sup_{a\in\D}\|f_a\|_{A^1_\om}=1$ shows that $I:A^1_\om\to AL^2_1(\mu)$ is not bounded. This finishes the proof.
\end{Prf}
\medskip

We will next prove Proposition~\ref{th:intro1}. To do this, we recall that the Hardy-Littlewood maximal function of $g\in L_{\textrm{loc}}^1(\T)$ is defined by
	\begin{equation}\label{eq:HLmaximal}
	M(g)(\xi) = \sup_{\xi\in I} \frac{1}{|I|}\int_{I} |g(z)| \, dz,\quad \xi\in\T,
	\end{equation}  
where the supremum is taken over all arcs $I$ on $\T$ containing the point $\xi$. 

\medskip

\noindent\emph{Proof of Proposition~\ref{th:intro1}.} To prove (i), let $p<q$. Then $\frac{q}{p}>1$, and hence
    \begin{align*}
		\|f\|^p_{T^q_p(\omega)}
		=\sup_{g\in B}\int_{\T}
		\left(\int_{\Gamma(\xi)}|f(z)|^{p}\omega(z)\frac{dA(z)}{1-|z|}\right)|g(\xi)|\,|d\xi|,
    \end{align*}
where we have written $B=B_{L^{(\frac{q}{p})'}(\T)}$ for the closed unit ball of $L^{(\frac{q}{p})'}(\T)$. By denoting $I(z)=\left\{\xi\in\T:z\in \Gamma(\xi)\right\}$, Fubini's theorem now yields
		\begin{align*}
    \|f\|^p_{T^q_p(\omega)}
		&=\sup_{g\in B}\int_{\D}|f(z)|^{p}\omega(z)\left(\int_{I(z)}|g(\xi)|\frac{|d\xi|}{1-|z|}\right)\,dA(z)\\
    &\le\sup_{g\in B}\int_{0}^{2\pi}\left(\int_{0}^{1} |f(re^{i\theta})|^{p}\omega(r)r\,dr\right) 
		\left(\sup_{r\in [0,1)}\int_{I(re^{i\theta})} |g(\xi)|\ \frac{|d\xi|}{1-r}\right)\ \frac{d\theta}{\pi}.
		\end{align*}
The length of $I(z)$ satisfies
	\begin{equation}\label{vjhblklkj}
	|I(z)|=\int_{\T} \chi_{\Gamma(\xi)}(z)\ |d\xi|\asymp (1-|z|),\quad z\in\D.
	\end{equation}
Therefore, by Hölder's inequality and the Hardy-Littlewood Maximal theorem, we get
	\begin{align*}
	\|f\|^p_{T^q_p(\omega)}
	&\lesssim\sup_{g\in B}\int_{0}^{2\pi}\left(\int_{0}^{1}|f(re^{i\theta})|^{p}\omega(r)r\,dr\right) M(g)(e^{i\theta})d\theta\\
	&\lesssim\|f\|^p_{L^q_p(\om)}\sup_{g\in B}\|M(g)\|_{L^{(\frac{q}{p})'}(\T)}
	\lesssim\|f\|^p_{L^q_p(\om)}\sup_{g\in B}\|g\|_{L^{(\frac{q}{p})'}(\T)}
	\le\|f\|^p_{L^q_p(\om)},
	\end{align*}
that is, $L^q_p(\omega)\subset T^q_p(\omega)$. 

To see that the inclusion is strict, consider the function 
    \[
    f(re^{i\theta})
		=\left\{\begin{array}{cc}
      \frac{(1-r)^{\frac{1}{p}}}{|\theta|^{\frac{1}{q}}\omega^{\frac{1}{p}}(r)},&\quad\omega(r)\theta\neq 0,  \\
        0,&\quad \omega(r)\theta = 0, 
    \end{array}\right.
    \]
and write $\chi_\om$ for the characteristic function of the set of positive measure on which $\om$ is different from zero. Then
    \begin{align*}
		\|f\|_{L^q_p(\om)}^{q}
		%&=\frac1{2\pi}\int_{0}^{2\pi}\left(\int_{0}^{1}|f(re^{i\theta})|^{p}\omega(r)r\,dr\right)^{\frac{q}{p}}\,d\theta\\
		&=\frac1{2\pi}\int_{0}^{2\pi}\frac{d\theta}{|\theta|}\left(\int_{0}^{1}\chi_\om(r)(1-r)r\,dr\right)^{\frac{q}{p}}=\infty
    \end{align*}
and 
	\begin{align*}
	\|f\|_{T_p^q(\omega)}^{q}	%&=\int_{0}^{2\pi}\left(\int_{0}^{1}\int_{|\theta|<1-r}|f(re^{i(\theta+t)})|^{p}\,d\theta\frac{\omega(r)r}{1-r}\frac{dr}{\pi}\right)^{\frac{q}{p}}\,dt\\
  &\le\int_{0}^{2\pi}\left(\int_{0}^{1}\left(\int_{|\theta|<1-r}\frac{d\theta}{|\theta+t|^{\frac{p}{q}}}\right)\,dr \right)^{\frac{q}{p}}\,dt
  \le\int_{0}^{2\pi} \left(\int_{-1}^{1}\frac{d\theta}{|\theta+t|^{\frac{p}{q}}}\right)^{\frac{q}{p}}\,dt
	<\infty
\end{align*}
because $\frac{p}{q}<1$. Thus $f\in T_p^q(\omega)\setminus L_p^q(\omega)$.

The statement (ii) is a well-known consequence of Fubini's theorem and \eqref{vjhblklkj}.

To prove (iii), let $q<p$, and choose $b>\frac{1}{q}$. By \cite[Theorem~1, p.~303] {Benedek_Panzone}, we have
	\begin{align*}
	\|f\|_{L^{q}_{p}(\om)}&=\||f|^{\frac1b}\|_{L^{bq}_{bp}(\om)}^{b}
	=\sup_{g\in B}  \left(\int_{\D} |f(z)|^{\frac1b}|g(z)|\omega(z)\,dA(z)\right)^b,
	\end{align*}
where we have  written $B=B_{L^{(bq)'}_{(bp)'}(\om)}$ for short. By applying \eqref{vjhblklkj}, Fubini's theorem and H\"older's inequality twice, we deduce
	\begin{align*}
	\|f\|_{L^q_p(\om)}^{\frac1b}
	&\asymp\sup_{g\in B}\int_{\D}|f(z)|^{\frac1b}|g(z)|\omega(z)\left(\int_{\T}\chi_{\Gamma(\xi)}(z)\,|d\xi|\right)\,\frac{dA(z)}{1-|z|}\\
	&=\sup_{g\in B}\int_{\T}\left(\int_{\Gamma(\xi)}|f(z)|^{\frac1b}|g(z)|\omega(z)\,\frac{dA(z)}{1-|z|}\right)\,|d\xi|\\ 
	&\le\sup_{g\in B}\||f|^{\frac1b}\|_{T_{bp}^{bq}(\omega)}\|g\|_{T^{(bq)'}_{(bp)'}(\om)}
	=\|f\|_{T^{q}_{p}(\om)}^\frac1b\sup_{g\in B}\|g\|_{T^{(bq)'}_{(bp)'}(\om)}.
	\end{align*}
But $bq<bp$ by the hypothesis, and hence $(bq)'>(bp)'$. Therefore Part (i) implies
	\begin{align*}
	\sup_{g\in B}\|g\|_{T^{(bq)'}_{(bp)'}(\om)}\lesssim\sup_{g\in B}\|g\|_{L^{(bq)'}_{(bp)'}(\om)}=1.
	\end{align*}
Thus we have shown that $T^q_p(\omega)\subset L^q_p(\omega)$. 

To see that the inclusion is strict, consider the function 
   \[
   f(re^{i\theta})=\left\{\begin{array}{cc}
      \frac{(1-r)^{\frac{1}{p}}}{|\theta|^{\frac{1}{p}}\omega^{\frac{1}{p}}(r)},&\quad\omega(r)\theta\neq 0,  \\
       0, &  \quad \omega(r)\theta = 0. 
    \end{array}\right.
    \]
Then
	\begin{align*}
	\|f\|_{L^q_p(\om)}^{q}
	%&=\frac1{2\pi}\int_{0}^{2\pi}\left(\int_{0}^{1}|f(re^{i\theta})|^{p}\omega(r)r\,dr\right)^{\frac{q}{p}}\,d\theta\\
	&\le\frac1{2\pi}\int_{0}^{2\pi}\frac{d\theta}{|\theta|^{\frac{q}{p}}}\left(\int_{0}^{1}(1-r)\,dr\right)^{\frac{q}{p}}
	<\infty
  \end{align*}
because $\frac{q}{p}<1$. Since $\om$ is a weight, there exists a constant $r_0=r_0(\om)\in(0,1)$ such that $\int_0^{r_0}\chi_\om(r)r\,dr>0$. This observation together with Fubini's theorem yields
	\begin{align*}
	\|f\|_{T_p^q(\omega)}^{q}
  &\ge\frac1{2\pi}\int_{0}^{2\pi}\left(\int_{0}^{1}\chi_\om(r)r
	\left(\int_{r-1}^0\frac{d\theta}{|\theta+t|}\right)dr\right)^{\frac{q}{p}} dt\\
	&=\frac1{2\pi}\int_{0}^{2\pi}\left(\int_{-1}^{0}\frac{1}{|\theta+t|}
	\left(\int_0^{1+\theta}\chi_\om(r)r\,dr\right)d\theta\right)^{\frac{q}{p}}dt\\
	&\ge\frac1{2\pi}\int_{0}^{2\pi}\left(\int_{r_0-1}^{0}\frac{1}{|\theta+t|}
	\left(\int_0^{r_0}\chi_\om(r)r\,dr\right)d\theta\right)^{\frac{q}{p}}dt\\
	&\gtrsim\int_{0}^{2\pi}\left(\int_{r_0-1}^{0}\frac{d\theta}{|\theta+t|}\right)^{\frac{q}{p}}dt
	\ge\int_{0}^{\frac{1-r_0}{2}}\left(\int_{r_0-1+t}^{t}\frac{dx}{|x|}\right)^{\frac{q}{p}}dt\\
	&\ge\frac{1-r_0}{2}\left(\int_0^{\frac{1-r_0}{2}}\frac{dx}{x}\right)^\frac{q}{p}=\infty,
	\end{align*}
and thus $f\in L_p^q(\omega)\setminus T_p^q(\omega)$.\hfill$\Box$

\section{Littlewood-Paley inequalities}\label{S3}

We begin with an auxiliary result which guarantees that, for each radial weight $\omega$, the norm convergence in either $AT^{q}_{p}(\omega)$ or $AL^{q}_{p}(\omega)$ implies the uniform convergence on compact subsets. As usual, we write
	\begin{equation*}
	\begin{split}
	M_p(r,f)=\left(\frac{1}{2\pi}\int_0^{2\pi}|f(re^{i\theta})|^p\,d\theta\right)^\frac1p, \quad 0<r<1,
	\end{split}
	\end{equation*}
for the $L^p$-mean of the restriction of $f$ to the circle of radius $r$, and $M_{\infty}(r,f)=\max_{|z|=r}|f(z)|$ for the maximum modulus.

\begin{lemma}\label{Prop2.3}
Let $0<p,q<\infty$ and $n\in\N\cup\{0\}$, and let $\omega$ be a radial weight. Then there exist constants $C_1=C_1(p,q,n,\om)>0$ and $C_2=C_2\left(p,q,\frac{1-\rho}{\rho-r}\right)>0$ such that
		\[
    M_\infty(r,f^{(n)})
		\le C_1\frac{\min\left\{\|f\|_{AL^{q}_{p}(\omega)},C_2\|f\|_{AT^{q}_{p}(\omega)}\right\}}{\widehat{\omega}(\rho)^{\frac{1}{p}}(\rho-r)^{\frac{1}{q}+n}},
		\quad 0\le r<\rho<1,\quad f\in \H(\D).
    \]
In particular, if $\omega\in\DD$, then the choice $\rho=\frac{1+r}{2}$ gives
    \[
    M_\infty(r,f^{(n)})
		\lesssim\frac{\min\left\{\|f\|_{AL^{q}_{p}(\omega)},\|f\|_{AT^{q}_{p}(\omega)}\right\}}{\widehat{\omega}(r)^{\frac{1}{p}}(1-r)^{\frac{1}{q}+n}},
		\quad 0\le r<1,\quad f\in \H(\D).
    \]
\end{lemma}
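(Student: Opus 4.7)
The plan is to separate out the derivative via Cauchy's formula and then prove the pointwise bound for $f$ itself by combining subharmonicity of $|f|^q$ with the maximal function bounds of Theorems~\ref{th:radialmaximal} and~\ref{th:tentmaximal}. For $|z|=r$, Cauchy's integral formula applied to the circle $\{|w-z|=(\rho-r)/2\}\subset D(0,(r+\rho)/2)$ yields $|f^{(n)}(z)|\le n!\,2^n(\rho-r)^{-n}M_\infty((r+\rho)/2,f)$, which reduces the problem to the $n=0$ case (with $r$ replaced by $(r+\rho)/2$; since $\rho-r$ is only halved, the factor $(\rho-r)^{-n}$ is absorbed cleanly).

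For the $AL^q_p$ bound at $n=0$, fix $z_0=re^{i\alpha}$ with $r\ge 1/2$ (the case $r<1/2$ reduces to $r=1/2$ by monotonicity of $M_\infty$). Since $f\in\H(\D)$, $|f|^q$ is subharmonic, and the sub-mean-value property on $D(z_0,(\rho-r)/2)\subset D(0,\rho)$ gives $|f(z_0)|^q\lesssim (\rho-r)^{-2}\int_{D(z_0,(\rho-r)/2)}|f(w)|^q\,dA(w)$. In polar coordinates this disc is contained in $\{se^{i\theta}:|s-r|\le(\rho-r)/2,\ |\theta-\alpha|\le C(\rho-r)\}$, so every $w=se^{i\theta}$ in it satisfies $s<\rho$. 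Combining $|f(se^{i\theta})|\le R(f)(se^{i\theta})$ with the monotonicity of $t\mapsto R(f)(te^{i\theta})$ yields, for $s<\rho$, the pointwise bound $|f(se^{i\theta})|^p\,\widehat{\omega}(\rho)\lesssim\int_\rho^1 R(f)(te^{i\theta})^p\omega(t)t\,dt\le H_R(\theta)$, where $H_R(\theta):=\int_0^1 R(f)(te^{i\theta})^p\omega(t)t\,dt$. Raising this to the $q/p$-th power gives $|f(se^{i\theta})|^q\le H_R(\theta)^{q/p}\widehat{\omega}(\rho)^{-q/p}$, and substituting back, computing $\int_{(3r-\rho)/2}^{(r+\rho)/2}s\,ds=r(\rho-r)$ and extending the angular range to $[0,2\pi]$, one finds $|f(z_0)|^q\lesssim (\rho-r)^{-1}\widehat{\omega}(\rho)^{-q/p}\int_0^{2\pi}H_R(\theta)^{q/p}d\theta\asymp (\rho-r)^{-1}\widehat{\omega}(\rho)^{-q/p}\|R(f)\|_{L^q_p(\omega)}^q$. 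Applying Theorem~\ref{th:radialmaximal} to bound $\|R(f)\|_{L^q_p(\omega)}\lesssim\|f\|_{AL^q_p(\omega)}$ and taking the $q$-th root and $\sup_\alpha$ completes the $AL^q_p$ part.

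The tent-space bound follows the same scheme with $R(f)$ replaced by the non-tangential maximal $N_M(f)$. For $q\le p$, Proposition~\ref{th:intro1} and Theorem~\ref{th:tentmaximal} deliver $\|N_M(f)\|_{L^q_p(\omega)}\lesssim\|N_M(f)\|_{T^q_p(\omega)}\lesssim\|f\|_{AT^q_p(\omega)}$. For $q>p$ this comparison reverses, and one instead chooses the aperture $M\asymp(\rho-r)/(1-\rho)$ large enough that $D(z_0,(\rho-r)/2)\subset\Gamma_M(e^{i\alpha})$; the constant $C_2=C_2((1-\rho)/(\rho-r))$ arises from this choice of aperture through Theorem~\ref{th:tentmaximal}. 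The specialisation to $\omega\in\DD$ follows from taking $\rho=(1+r)/2$: Lemma~\ref{Lem_D_hat}(iii) gives $\widehat{\omega}(\rho)\asymp\widehat{\omega}(r)$ and $(1-\rho)/(\rho-r)=1$, absorbing $C_2$ into an absolute constant. The main obstacle is extracting the sharp exponent $(\rho-r)^{1/q}$ instead of the easier $(\rho-r)^{1/p}$ that a direct sub-mean-value estimate on $|f|^p$ would produce; the decisive trick is to apply subharmonicity to $|f|^q$ (available for every $q>0$ since $f$ is analytic) and to substitute the pointwise bound for $|f|^p$ only after raising to the $q/p$-th power, so that $(\rho-r)^{1/q}$ emerges from the scaling of the disc area $(\rho-r)^2$ against the radial integral $r(\rho-r)$.
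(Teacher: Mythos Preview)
Your $AL^q_p$ argument is correct and is essentially the paper's approach in different clothing: the paper invokes the Hardy-space pointwise estimate $M_\infty^q(r,f)\le\rho\,M_q^q(\rho,f)/(\rho-r)$ (itself a packaged form of subharmonicity of $|f|^q$) together with $R(f)(\rho e^{i\theta})^p\,\widehat\omega(\rho)\le\int_0^1 R(f)^p(te^{i\theta})\omega(t)\,dt$ and Theorem~\ref{th:radialmaximal}; your direct sub-mean-value computation is an equivalent unpacking. The Cauchy reduction to $n=0$ is likewise the same as in the paper.

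The tent-space case for $q>p$, however, has a genuine gap. First, the containment $D(z_0,(\rho-r)/2)\subset\Gamma_M(e^{i\alpha})$ in fact holds for a \emph{bounded} aperture independent of $r,\rho$: for $w=se^{i\theta}$ in that disc one has $|\theta-\alpha|\lesssim\rho-r$ while $1-s>1-(r+\rho)/2\ge(\rho-r)/2$, so any fixed $M\gtrsim 1$ already works, contrary to your claim that $M\asymp(\rho-r)/(1-\rho)$ is required. Second, and more seriously, you give no mechanism connecting a single cone $\Gamma_M(e^{i\alpha})$ to the $T^q_p$ norm, which averages $\bigl(\int_{\Gamma_1(\xi)}\cdots\bigr)^{q/p}$ over all $\xi\in\T$; the sentence ``the constant $C_2$ arises from this choice of aperture through Theorem~\ref{th:tentmaximal}'' does not constitute an argument. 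The paper avoids any case split on $p,q$ by introducing the radially non-decreasing average
\[
\Phi_{f,M,p}(u)=\frac{1}{1-|u|}\int_{|t-\arg u|<1-|u|}N_M(f)^p(|u|e^{it})\,dt,
\]
for which $\|N_M(f)\|_{T^q_p}^q\asymp\int_0^{2\pi}\bigl(\int_0^1\Phi_{f,M,p}(te^{i\theta})\omega(t)\,dt\bigr)^{q/p}d\theta$. Choosing $M=\max\{1,2(1-\rho)/(\rho-r)\}$ (the \emph{reciprocal} of your claimed aperture) forces $\Phi_{f,M,p}(\rho e^{i\theta})\ge|f(\tfrac{r+\rho}{2}e^{i\theta})|^p$, and then the monotonicity of $\Phi$ in $|u|$ plays exactly the role that monotonicity of $R(f)$ played in the $AL^q_p$ case, producing $M_q^q(\tfrac{r+\rho}{2},f)$ and hence the bound uniformly in $p,q$. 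Your shortcut via Proposition~\ref{th:intro1} for $q\le p$ is a legitimate alternative route on that range, but the $q>p$ branch as written does not close.
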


\begin{proof}
It is well known that $|f(\zeta)|^q\le\|f\|^q_{H^q}/(1-|\zeta|)$ for all $\zeta\in\D$, $0<q<\infty$ and $f\in\H(\D)$, see \cite[Theorem~9.1]{Zhu2007} for details. An application of this inequality to $\zeta\mapsto f\left(\rho\zeta\right)$ at $\zeta=z/\rho$ yields 
	\begin{align*}
	M_\infty^q(r,f)
	\le\rho\frac{M_q^q\left(\rho,f\right)}{\rho-r},\quad 0<r<\rho<1,\quad f\in\H(\D).
	\end{align*}
This estimate together with Theorem~\ref{th:radialmaximal} gives
	\begin{equation*}
	\begin{split}
	\|f\|_{AL^{q}_{p}(\omega)}^q
	&\asymp\frac1{2\pi}\int_0^{2\pi}\left(\int_0^1R(f)^p(te^{i\theta})\om(t)\,dt\right)^\frac{q}{p}d\theta
	\ge\widehat\om\left(\rho\right)^\frac{q}{p}\frac1{2\pi}\int_0^{2\pi}R(f)^q\left(\rho e^{i\theta}\right)\,d\theta\\
	&\ge\widehat\om\left(\rho\right)^\frac{q}{p}M_q^q\left(\rho,f\right)
	\ge\widehat\om\left(\rho\right)^\frac{q}{p}(\rho-r)M^q_\infty(r,f),\quad 0<r<\rho<1,\quad f\in\H(\D),
	\end{split}
	\end{equation*}
and thus the statement for $AL^q_p(\om)$ is proved in the case $n=0$. The general case $n\in\N$ is a consequence of $n$ applications of the inequality
	$$
	M_\infty(r,f')\lesssim\frac{M_\infty(\rho,f)}{\rho-r},\quad 0\le r<\rho<1,\quad f\in\H(\D),
	$$
which is a consequence of the generalized Cauchy integral formula, and the case $n=0$ just proved. Details are omitted.

To deal with $AT^q_p(\om)$, let $M=M(r,\rho)= \max\left\{1,2\frac{1-\rho}{\rho-r}\right\}$. Since $\frac{r+\rho}{2}e^{i\theta}\in\Gamma_M\left(\rho e^{it}\right)$ whenever $|\theta-t|<1-\rho$, we have 
	$$
	\Phi_{f,M,p}(\rho e^{i\theta})=\frac{1}{1-\rho}\int_{|t-\theta|<1-\rho}N_M(f)^p(\rho e^{it})\,dt\ge\left|f\left(\frac{r+\rho}{2}e^{i\theta}\right)\right|^p.
	$$
Hence Theorem~\ref{th:tentmaximal} and \eqref{Eq:increasing} yield
	\begin{equation*}
	\begin{split}
	\|f\|_{AT^{q}_{p}(\omega)}^q
	&\asymp\frac1{2\pi}\int_0^{2\pi}\left(\int_0^1\Phi_{f,M,p}(te^{i\theta})\om(t)\,dt\right)^\frac{q}{p}d\theta
	\ge\widehat\om\left(\rho\right)^\frac{q}{p}\frac1{2\pi}\int_0^{2\pi}\Phi_{f,M,p}^\frac{q}{p}\left(\rho e^{i\theta}\right)\,d\theta\\
	&\ge\widehat\om\left(\rho\right)^\frac{q}{p}M_q^q\left(\frac{r+\rho}{2},f\right)
	\ge\widehat\om\left(\rho\right)^\frac{q}{p}\frac{\rho-r}{2}M^q_\infty(r,f),\quad 0<r<\rho<1,\quad f\in\H(\D),
	\end{split}
	\end{equation*}
and thus the case $n=0$ of the statement for $AT^q_p(\om)$ is proved. For $n\in\N$ the assertion follows as in the case of $AL^q_p(\om)$.
\end{proof}

The pseudohyperbolic disc centered at $z\in\D$ and of radius $0<r<1$ is the set $\Delta(z,r)=\{\zeta\in\D:|\varphi_z(\zeta)|<r\}$, where $\varphi_z(\zeta)=(z-\zeta)/(1-\overline{z}\zeta)$ for all $z,\zeta\in\D$. It coincides with the Euclidean disc $D(Z,R)$, where 
	$$
	Z=\frac{1-r^2}{1-|z|^2r^2}z\quad\textrm{and}\quad R=\frac{1-|z|^2}{1-|z|^2r^2}r.
	$$

\begin{theorem}\label{theo_LP_1} Let $0<p,q<\infty$ and $n\in\N$, and let $\omega$ be a radial weight. Then $\om\in\widehat{\mathcal{D}}$ if and only if there exists a constant $C=C(p,q,n,\om)>0$ such that
\[
	\|f^{(n)}(1-|\cdot|)^{n}\|_{L_{p}^{q}(\omega)}+\sum_{j=0}^{n-1}|f^{(j)}(0)|
	\le C\|f\|_{AL_{p}^{q}(\omega)},\quad f\in\H(\D).
\]
Similarly, $\om\in\widehat{\mathcal{D}}$ if and only if there exists a constant $C=C(p,q,n,\om)>0$ such that
\[
	\|f^{(n)}(1-|\cdot|)^{n}\|_{T_{p}^{q}(\omega)}+\sum_{j=0}^{n-1}|f^{(j)}(0)|
	\le C\|f\|_{AT_{p}^{q}(\omega)},\quad f\in\H(\D).
\]
\end{theorem}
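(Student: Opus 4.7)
I will prove both equivalences in both directions, treating the $AL^q_p(\omega)$ and $AT^q_p(\omega)$ statements in parallel. The sufficiency of $\omega\in\widehat{\mathcal{D}}$ constitutes the main content; necessity follows by testing with monomials. The essential tools are a pointwise Cauchy-type estimate, the comparison \eqref{trivial}, and the non-tangential maximal theorems \ref{th:radialmaximal} and \ref{th:tentmaximal} established above.

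The starting point is the pointwise bound
\[
|f^{(n)}(re^{i\theta})|(1-r)^n \lesssim N_M(f)\!\left(\tfrac{3+r}{4}e^{i\theta}\right),\qquad r\in[1/2,1),
\]
valid for any $M>4$, obtained from Cauchy's integral formula on the circle of radius $(1-r)/2$ about $re^{i\theta}$ together with the geometric observation that $D(re^{i\theta},(1-r)/2)\subset\Gamma_M\!\left(\tfrac{3+r}{4}e^{i\theta}\right)$. The range $r\in[0,1/2)$ contributes at most a constant multiple of $\|f\|_{AL^q_p(\omega)}$ (respectively $\|f\|_{AT^q_p(\omega)}$) via Lemma~\ref{Prop2.3}, which also yields the direct estimate $\sum_{j=0}^{n-1}|f^{(j)}(0)|\lesssim\|f\|$.

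For the $AL^q_p(\omega)$ statement, the change of variable $t=(3+r)/4$ turns $\int_0^1 N_M(f)^p\!\left(\tfrac{3+r}{4}e^{i\theta}\right)\omega(r)r\,dr$ into an integral against a push-forward weight $\omega^*$ on $[3/4,1)$ whose tail satisfies $\widehat{\omega^*}(r)\lesssim\widehat\omega(4r-3)\lesssim\widehat\omega(r)$ by two iterations of the $\widehat{\mathcal{D}}$ condition. Since $t\mapsto N_M(f)(te^{i\theta})^p$ is non-decreasing, \eqref{trivial} replaces $\omega^*$ by $\omega$, yielding $\int_0^1 N_M(f)^p(te^{i\theta})\omega(t)t\,dt$; raising to the $q/p$-th power, integrating in $\theta$, and invoking Theorem~\ref{th:radialmaximal} closes this case. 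For $AT^q_p(\omega)$, integrating the pointwise bound over $\Gamma(\xi)$ and performing the polar change of variable $z\mapsto\tilde z:=\tfrac{3+|z|}{4}\cdot z/|z|$ converts the integral into one over $\Gamma_4(\xi)$ against a modified radial weight $\mu$ with $\widehat\mu\lesssim\widehat\omega$. The angular-average monotonicity captured by \eqref{Eq:increasing}, combined with \eqref{trivial}, replaces $\mu$ by $\omega$, and Theorem~\ref{th:tentmaximal}, applied with a cone-opening parameter large enough to absorb the enlargement $\Gamma\to\Gamma_4$, produces the bound by $\|f\|_{AT^q_p(\omega)}$.

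Necessity is established by testing both inequalities with the monomials $f(z)=z^N$, $N\in\N$. Both sides then reduce to explicit expressions involving the moments $\omega_{Np+1}$ and $\int_0^1 r^{(N-n)p+1}(1-r)^{np}\omega(r)\,dr$; since the latter integrand peaks near $r=1-n/N$, asymptotic analysis as $N\to\infty$ forces a moment-doubling condition on $\omega$ which is equivalent to $\omega\in\widehat{\mathcal{D}}$ by standard arguments (cf.~\cite[Theorem~3]{PR2020} and the proof of \cite[Theorem~6]{PR19}). The main obstacle is the tent space sufficiency: tracking the polar change of variable $z\mapsto\tilde z$ on the cone $\Gamma(\xi)$ while simultaneously invoking the angular-average monotonicity \eqref{Eq:increasing}, the $\widehat{\mathcal{D}}$-bound on the induced weight $\mu$, and the cone-opening invariance implicit in Theorem~\ref{th:tentmaximal} demands careful bookkeeping so that all constants remain independent of $f$ and $\xi$.
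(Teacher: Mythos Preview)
Your proposal is correct and follows essentially the same route as the paper: a pointwise bound $|f^{(n)}(z)|(1-|z|)^n\lesssim N_M(f)(\tilde z)$ at a radially shifted $\tilde z$ (you via Cauchy's formula, the paper via subharmonicity combined with an $M_p$-means derivative estimate on pseudohyperbolic discs), then a change of variable, the tail-integral comparison \eqref{trivial} under the $\widehat{\mathcal{D}}$ hypothesis, and finally the maximal Theorems~\ref{th:radialmaximal} and~\ref{th:tentmaximal}; necessity is by monomial testing in both treatments. The paper is equally brief on the tent-space sufficiency, dispatching it as ``a slight modification'' of the $AL^q_p$ argument, so your level of detail there is on par.
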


\begin{proof}
Fix $0<r<t<1$. A change of variable and an application of 
	\begin{align*}
	M_p(r,f')
	\lesssim\frac{M_p\left(\rho,f\right)}{\rho-r},\quad 0\le r<\rho<1,\quad f\in\H(\D),
	\end{align*}
the proof of which can be found in \cite{Duren1970}, yield
	\begin{equation*}
	\begin{split}
	\int_{\Delta(z,r)}|f'(\z)|^p\,dA(\z)
	%&\asymp(1-|z|)^{2-p}\int_0^1 M_p^p((f\circ\varphi_z)',rs)\,ds\\
	%&\lesssim(1-|z|)^{2-p}\int_0^1\frac{M_p^p\left(f\circ\varphi_z,\frac{rs+\e}{1+\e}\right)}{(1-rs)^p}\,ds\\
	%&=(1-|z|)^{2-p}\int_{\frac{\e}{1+\e}}^\frac{r+\e}{1+\e}\frac{M_p^p\left(f\circ\varphi_z,x\right)}{(1+\e)^p(1-x)^p}\frac{1+\e}r\,dx\\
	%&\lesssim(1-|z|)^{2-p}\frac{1+\e}{(1-r)^pr}\int_{\frac{\e}{1+\e}}^\frac{r+\e}{1+\e}M_p^p\left(f\circ\varphi_z,x\right)\,dx\\
	%&\lesssim(1-|z|)^{2-p}\int_0^tM_p^p\left(f\circ\varphi_z,x\right)x\,dx\\
	%&\asymp(1-|z|)^{-p}\int_{D\left(0,t\right)}|f(\varphi_z(\zeta))|^p|\varphi_z'(\zeta)|^2\,dA(\zeta)\\
	&\lesssim\frac1{(1-|z|)^{p}}\int_{\Delta(z,t)}|f(\z)|^p\,dA(\z),\quad z\in\D.
	\end{split}
	\end{equation*}
By using this estimate for $0<t<1$ sufficiently small, the subharmonicity of $|f'|^p$, \eqref{trivial}, Lemma~\ref{Prop2.3} and Theorem~\ref{th:radialmaximal} imply 
	\begin{equation*}
	\begin{split}
	\|f'(1-|\cdot|)\|_{L_{p}^{q}(\omega)}^q
	&\lesssim\int_0^{2\pi}\left(\int_0^1\left(\frac{1}{(1-s)^2}\int_{\Delta(se^{i\theta},r)}|f'(\zeta)|^p\,dA(\zeta)\right)(1-s)^p\om(s)s\,ds\right)^\frac{q}{p}d\theta\\
	&\lesssim\int_0^{2\pi}\left(\int_0^1\left(\frac{1}{(1-s)^2}\int_{\Delta(se^{i\theta},t)}|f(\zeta)|^p\,dA(\zeta)\right)\om(s)s\,ds\right)^\frac{q}{p}d\theta\\
	&\lesssim M^q_\infty\left(\frac12,f\right)
	+\int_0^{2\pi}\left(\int_0^1N(f)^p\left(\frac{1+s}{2}e^{i\theta}\right)\om(s)s\,ds\right)^\frac{q}{p}d\theta\\
	&\lesssim M^q_\infty\left(\frac12,f\right)
	+\|N(f)\|_{L^q_p(\om)}^q\lesssim\|f\|_{AL^q_p(\om)}^q.
	\end{split}
	\end{equation*}
A slight modification in this argument shows that $\|f'(1-|\cdot|)\|_{T_{p}^{q}(\omega)}\lesssim\|f\|_{AT^q_p(\om)}$, provided $\om\in\DD$. The general case $n\in\N$ readily follows from the argument above.

Conversely, by testing one of the inequalities with monomials easily give $\om\in\DD$, see the proof of \cite[Theorem~6]{PR19} for a similar argument.
\end{proof}

\begin{theorem}\label{Thm:LP} Let $0<p,q<\infty$, $n\in\N$ and $\omega\in\DDD$. Then there exists a constant $C=C(p,q,n,\om)>0$ such that
    \[
		\|f\|_{AT_{p}^{q}(\omega)}
		\le C\left(\|f^{(n)}(1-|\cdot|)^{n}\|_{T_{p}^{q}(\omega)}+\sum_{j=0}^{n-1}|f^{(j)}(0)|\right),\quad f\in\H(\D).
    \]
\end{theorem}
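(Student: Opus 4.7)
The plan is to reduce the tent-space Littlewood--Paley inequality to its average-radial analogue via Theorem~\ref{th:samespaceintro 1}, and then to prove the latter by combining a radial Taylor expansion with a one-variable weighted Hardy-type inequality. Since $\om\in\DDD\subset\DD$, Theorem~\ref{th:samespaceintro 1} gives $\|f\|_{AT_p^q(\om)}\asymp\|f\|_{AL_p^q(\om)}$. Setting $\mu(z)=(1-|z|)^{np}\om(z)$, an elementary estimate combined with the doubling property of $\om$ gives $\widehat{\mu}(r)\asymp(1-r)^{np}\widehat{\om}(r)$, so by Lemma~\ref{Lem_D_hat} the weight $\mu$ belongs to $\DD$. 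Applying Theorem~\ref{th:samespaceintro 1} to the analytic function $f^{(n)}$ with the weight $\mu$ yields
$$\|f^{(n)}(1-|\cdot|)^n\|_{T_p^q(\om)}=\|f^{(n)}\|_{AT_p^q(\mu)}\asymp\|f^{(n)}\|_{AL_p^q(\mu)}=\|f^{(n)}(1-|\cdot|)^n\|_{L_p^q(\om)},$$
so the proof reduces to establishing
$$\|f\|_{AL_p^q(\om)}\lesssim\|f^{(n)}(1-|\cdot|)^n\|_{L_p^q(\om)}+\sum_{j=0}^{n-1}|f^{(j)}(0)|,\quad f\in\H(\D).$$

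For the average-radial inequality, I would start from the radial Taylor formula
$$f(re^{i\theta})=\sum_{j=0}^{n-1}\frac{f^{(j)}(0)}{j!}(re^{i\theta})^j+\frac{e^{in\theta}}{(n-1)!}\int_0^r(r-s)^{n-1}f^{(n)}(se^{i\theta})\,ds,$$
use the bound $(r-s)^{n-1}\le(1-s)^{n-1}$, and reduce the estimate, with the substitution $h(s)=(1-s)^{n-1}|f^{(n)}(se^{i\theta})|$, to the one-variable weighted Hardy-type inequality
\begin{equation}\label{eq:HardyPlan}
\int_0^1\left(\int_0^r h(s)\,ds\right)^p\om(r)r\,dr\lesssim\int_0^1 h(s)^p(1-s)^p\om(s)s\,ds,\quad h\ge 0,
\end{equation}
applied for each fixed $\theta$, followed by raising to the $q/p$-th power and integrating in $\theta$.

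The heart of the argument and the main obstacle is establishing \eqref{eq:HardyPlan} for $\om\in\DDD$, since $\om$ may vanish on substantial subsets of $[0,1)$ and this would break the classical Muckenhoupt--Bradley condition phrased with $\om$ itself. To bypass this I would first replace $\om$ by its continuous and strictly positive regularization $\widetilde{\om}=\widehat{\om}/(1-|\cdot|)$, which still lies in $\DDD$ by Lemma~\ref{Lem_D}(iv); this is legitimate because Theorem~\ref{th:samespace2}, together with the identity $\widetilde{\mu}\asymp(1-|\cdot|)^{np}\widetilde{\om}$, preserves both sides of the sought inequality when $\om$ is replaced by $\widetilde{\om}$. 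Using $\widetilde{\om}(s)=\widehat{\om}(s)/(1-s)$, the Muckenhoupt--Bradley criterion for \eqref{eq:HardyPlan} with $\widetilde{\om}$ in place of $\om$ and $1<p<\infty$ simplifies to
$$\sup_{0<R<1}\widehat{\om}(R)^{1/p}\left(\int_0^R(1-s)^{-1}\widehat{\om}(s)^{-\frac{1}{p-1}}\,ds\right)^{\frac{1}{p'}}<\infty,$$
which I would verify by splitting the inner integral dyadically along the sequence $\{\rho_n(\om,K)\}$ of Lemma~\ref{Lem_D}(ii): on each block $[\rho_n,\rho_{n+1}]$ the two-sided doubling gives $1-\rho_n\asymp 1-\rho_{n+1}$ and $\widehat{\om}\asymp\widehat{\om}(\rho_n)=\widehat{\om}(0)K^{-n}$, so the block contributes $\widehat{\om}(\rho_n)^{-1/(p-1)}\asymp K^{n/(p-1)}$, and the total sums, as a geometric series with ratio $K^{1/(p-1)}>1$, to a bounded multiple of $\widehat{\om}(R)^{-1/(p-1)}$. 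The identity $(p-1)p'=p$ then collapses the Muckenhoupt constant to a bounded quantity independent of $R$. The case $p=1$ is handled directly by Fubini using $\widehat{\widetilde{\om}}\asymp\widehat{\om}$ from Lemma~\ref{Lem_D}(iii), and the case $0<p<1$ by a subharmonic mean-value argument in the spirit of Lemma~\ref{Prop2.3}, passing to a larger exponent where the classical Hardy inequality applies and then recovering the original $p$ by H\"older.
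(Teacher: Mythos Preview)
Your reduction to the $AL_p^q$ setting and the plan for $p>1$ are sound and close to the paper's own argument: the paper also proves the $AL_p^q$ inequality for $1<p<\infty$ by writing $|f(re^{i\theta})|\le\int_0^r|f'(te^{i\theta})|\,dt$, inserting a power weight $h(t)=(1-t)^{1-(1-\e)/p}$ and applying H\"older, which is exactly the classical proof of the Muckenhoupt--Bradley criterion you invoke. Your regularization step $\om\rightsquigarrow\widetilde\om$ via Theorem~\ref{th:samespace2} (together with $\mu=(1-|\cdot|)^{np}\om\in\DDD$ and $\widetilde\mu\asymp(1-|\cdot|)^{np}\widetilde\om$) is correct; the only cosmetic issue is the factor $s$ in the weight on the right of \eqref{eq:HardyPlan}, which makes the Muckenhoupt integral diverge near $0$ when $1<p\le 2$, but this disappears once one treats $[0,1/2]$ trivially. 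The $p=1$ case via Fubini and $\widehat{\widetilde\om}\asymp\widehat\om$ is fine with the same caveat.

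The genuine gap is the range $0<p<1$. The Hardy inequality \eqref{eq:HardyPlan} is \emph{false} for general nonnegative $h$ when $p<1$: take $h=\delta^{-1}\chi_{[1/2-\delta,1/2]}$ so that $\int_0^r h=1$ for $r\ge 1/2$; the left side stays bounded below while the right side is $\asymp\delta^{1-p}\to 0$. Thus \eqref{eq:HardyPlan} cannot be proved for arbitrary $h$, and your suggestion of ``passing to a larger exponent where the classical Hardy inequality applies and then recovering the original $p$ by H\"older'' goes in the wrong direction---H\"older's inequality lets you push exponents \emph{up}, not down. One must instead exploit that $h$ arises from an analytic function. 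The paper does this for $0<p\le 1$ by a dyadic decomposition $r_k=1-2^k(1-r)$, writing $|f(re^{i\theta})|^p\le\bigl(\sum_k\int_{r_{k+1}}^{r_k}|f'|\bigr)^p\le\sum_k(\sup_{[r_{k+1},r_k]}|f'|)^p(1-r_k)^p$ via subadditivity of $x\mapsto x^p$, and then controlling each supremum by a subharmonic mean over a pseudohyperbolic disc so that the sum collapses into a cone integral. Something of this flavor is needed; your outline does not provide it.
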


\begin{proof}
By  Theorem~\ref{th:samespace2} it is enough to prove the statement for  the average radial integrability spaces.
We may assume without loss of generality that $f(0)=0$. Let first $1<p<\infty$ and take $h(t)=(1-t)^{1-\frac{1-\e}{p}}$, where $0<\e<1$ will be fixed later. Then the estimate $|f(re^{i\theta})|\le\int_0^{r}|f'(te^{i\theta})|\,dt$, 
H\"older's inequality and Fubini's theorem yield
	\begin{equation*}
	\begin{split}
	\int_0^1|f(re^{i\theta})|^p\om(r)r\,dr
	&\le\int_0^1\left(\int_0^r|f'(te^{i\theta})|h(t)\frac{dt}{h(t)}\right)^p\om(r)r\,dr\\
	&\le\int_0^1\left(\int_0^r|f'(te^{i\theta})|^ph(t)^p\,dt\right)
	\left(\int_0^r\frac{dt}{h(t)^{p'}}\right)^{p-1}\om(r)r\,dr\\
	&\lesssim\int_0^1\left(\int_0^r|f'(te^{i\theta})|^p(1-t)^{p-1+\e}\,dt\right)
	\om_{[-\e]}(r)r\,dr\\
	&=\int_0^1|f'(te^{i\theta})|^p(1-t)^{p-1+\e}\left(\int_t^1\om_{[-\e]}(r)r\,dr\right)dt.
	\end{split}
	\end{equation*}
By \cite[Lemma~2]{PR2023} we may fix $\e=\e(\om)\in(0,1)$ sufficiently small such that $\int_t^1\om_{[-\e]}(r)\,dr\lesssim\widehat{\om}_{[-\e]}(t)$ for all $0\le t<1$, and thus
	\begin{equation*}
	\begin{split}
	\int_0^1|f(re^{i\theta})|^p\om(r)r\,dr
	&\lesssim\int_0^1N(f')(te^{i\theta})^p\widetilde\om_{[p]}(t)\,dt.
	\end{split}
	\end{equation*}
Further, as $1<p<\infty$ and $\om\in\Dd$ by the hypotheses, \cite[Lemma~9(v)]{PGRV2024} gives
	\begin{equation}\label{lejherlkhg}
	\int_r^1\widetilde\om_{[p]}(t)\,dt
	\le(1-r)^p\widehat{\om}(r)\lesssim\int_r^1\om_{[p]}(t)\,dt,\quad 0\le r<1.
	\end{equation}
Therefore \eqref{trivial} yields
	\begin{equation*}
	\begin{split}
	\int_0^1N(f')(te^{i\theta})^p\widetilde\om_{[p]}(t)\,dt
	\lesssim\int_0^1N(f')(te^{i\theta})^p\om_{[p]}(t)\,dt.
	\end{split}
	\end{equation*}
These estimates together with Theorem~\ref{th:radialmaximal} imply 
		\[
		\|f\|_{AL_{p}^{q}(\omega)}
		\lesssim\|N(f')(1-|\cdot|)\|_{L_{p}^{q}(\omega)}
		\lesssim\|f'(1-|\cdot|)\|_{L_{p}^{q}(\omega)},\quad f\in\H(\D),\quad f(0)=0.
    \]
The case $n=1$ for $AL_{p}^{q}(\omega)$ is now proved. Since $\om_{[np]}\in\Dd$ by \cite[Lemma~9(v)]{PGRV2024}, the general case follows from the estimate above.

Let now $0<p\le1$, and for each $0<r<1$ set $r_n=r_n(r)=\max\{1-2^{n}(1-r),0\}$. Further, choose $0<s<\rho<1$ such that $\Delta(te^{i\theta},s)\subset\Delta(r_ne^{i\theta},\rho)$ for all $r_{n+1}\le t\le r_n$ and $n\in\N\cup\{0\}$. Then the subharmonicity of $|f'|^p$ yields
	\begin{equation*}
	\begin{split}
	|f(re^{i\theta})|^p
	&\le\left(\sum_{n=0}^\infty\int_{r_{n+1}}^{r_n}|f'(te^{i\theta})|dt\right)^p
	\le\left(\sum_{n=0}^\infty\sup_{r_{n+1}\le t\le r_n}|f'(te^{i\theta})|(r_n-r_{n+1})\right)^p\\
	&\le\sum_{n=0}^\infty\left(\sup_{r_{n+1}\le t\le r_n}|f'(te^{i\theta})|^p\right)(1-r_n)^p
	\lesssim\sum_{n=0}^\infty\int_{\Delta(r_ne^{i\theta},\rho)}|f'(z)|^p\,dA(z)(1-r_n)^{p-2}\\
	&\asymp\sum_{n=0}^\infty\int_{\Delta(r_ne^{i\theta},\rho)}|f'(z)|^p(1-|z|)^{p-2}\,dA(z)\\
	&\lesssim\int_{\Gamma_M(\frac{r+K}{K+1}e^{i\theta})\cup D(0,\frac{K}{K+1})}|f'(z)|^p(1-|z|)^{p-2}\,dA(z)
	\end{split}
	\end{equation*}
for some sufficiently large $1<K,M<\infty$. By using the hypothesis $\om\in\DD$, it follows that
	\begin{equation*}
	\begin{split}
	\int_0^1|f(re^{i\theta})|^p\om(r)r\,dr
	&\lesssim\int_0^1\left(\int_{\Gamma_M(\frac{r+K}{K+1}e^{i\theta})\cup D\left( 0,\frac{K}{K+1}\right)}|f'(z)|^p(1-|z|)^{p-2}\,dA(z)\right)\om(r)r\,dr\\
	&\lesssim M^p_\infty\left(\frac{K}{K+1},f'\right)+\int_{\Gamma_M(e^{i\theta})\setminus D\left( 0,\frac{K}{K+1}\right)}|f'(z)|^p(1-|z|)^{p-2}\widehat{\om}\left((K+1)|z|-K\right)\,dA(z)\\
	&\lesssim M^p_\infty\left(\frac{K}{K+1},f'\right)+\int_{\Gamma_M(e^{i\theta})}|f'(z)|^p(1-|z|)^{p-2}\widehat{\om}\left(z\right)\,dA(z)\\
	&\le M^p_\infty\left(\frac{K}{K+1},f'\right)+\int_{0}^1\Psi_{f',M,p}(re^{i\theta})(1-r)^{p}\widetilde{\om}\left(r\right)\,dr.
	\end{split}
	\end{equation*}
Since $r\mapsto\Psi_{f',M,p}(re^{i\theta})$ is non-decreasing by \eqref{Eq:increasing}, we may proceed as in the case $1<p<\infty$. Namely, \cite[Lemma~9(v)(vi)]{PGRV2024} imply \eqref{lejherlkhg} with $\lesssim$ in place of $\le$, and hence an application of \eqref{trivial} together with Lemma~\ref{Prop2.3} and Theorem~\ref{th:tentmaximal} imply 
		\[
		\|f\|_{AL_{p}^{q}(\omega)}
		\lesssim\|N(f')(1-|\cdot|)\|_{T_{p}^{q}(\omega)}
		\lesssim\|f'(1-|\cdot|)\|_{T_{p}^{q}(\omega)},\quad f\in\H(\D),\quad f(0)=0.
    \]
Since $\om_{[p]}\in\DDD$ by the hypothesis $\om\in\DDD$, Theorem~\ref{th:samespaceintro 1} yields
		\[
		\|f\|_{AL_{p}^{q}(\omega)}
		\lesssim\|f'(1-|\cdot|)\|_{L_{p}^{q}(\omega)},\quad f\in\H(\D),\quad f(0)=0.
    \]
The general case
 \[
		\|f\|_{AL_{p}^{q}(\omega)}
		\le C\left(\|f^{(n)}(1-|\cdot|)^{n}\|_{L_{p}^{q}(\omega)}+\sum_{j=0}^{n-1}|f^{(j)}(0)|\right),\quad f\in\H(\D),
    \]
concerning the average radial integrability spaces 
 follows by iterating this estimate.  The statement of the theorem follows from Theorem~\ref{th:samespace2} because
$\om\in\DDD$.
\end{proof}

\begin{Prf}{\em{Theorem~\ref{thm:LPchar}.}} 
Assume that   $\omega \in \mathcal{D}$.  Then,  (ii) and (iii) hold by  Theorems~\ref{theo_LP_1} and \ref{Thm:LP}. Conversely, if (ii) or (iii) holds then $\om\in\DD$ by 
Theorem~\ref{theo_LP_1}. Therefore,  it suffices to show that whenever $\om\in\DD$ both statements (ii) and (iii) guarantee  $\om\in\DDD$. By
 testing on the monomials $f_n(z)=z^n$,  we get
$${\om}_{np+1}\asymp n^{kp} (\om_{[kp]})_{np+1}, \quad n\in\N\cup\{0\}.$$
Then,  $\om\in \DDD$     by \cite[(1.2) and Theorem~3]{PR19}.
\end{Prf}

\section{Integration operators}\label{S4}

We begin with constructing suitable test functions. For that purpose we need the following lemma.

\begin{lemma}\label{le:test}
Let $0<p,q<\infty$ and $\omega\in\widehat{\mathcal{D}}$. Further, let $\beta_0=\beta_0(\omega)$ be that of Lemma~\ref{Lem_D_hat}(iii), and $\beta>\frac{1}{q}+\frac{\beta_{0}}{p}$. Then the function
    \[
    f_\lambda(z)=\frac{1}{\left( 1- \overline{\lambda} z \right)^{\beta}},\quad z\in\D,
    \]
satisfies  
		$$
    \|f_{\lambda}\|_{L^q_p(\om)}
		\lesssim\widehat{\omega}(\lambda)^{\frac{1}{p}}(1-|\lambda|)^{\frac{1}{q}-\beta},\quad \lambda \in \D.
    $$
 %  for any $\lambda \in \mathbb{D}$ and $\beta >\frac{1}{q}+ \frac{\beta_{0}}{p}$, where $\beta_0$ is that from  \eqref{eq_D_hat_test_function}.
\end{lemma}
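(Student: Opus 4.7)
The plan is to reduce to the case where $\rho:=|\lambda|$ is close to $1$, since for $\rho\leq\tfrac12$ one has $|f_\lambda|\leq 2^\beta$ and both sides of the claimed inequality are of order $\omg(0)^{1/p}$. By rotation invariance of $\omega$, we will assume WLOG that $\arg\lambda=0$. Expanding the quasi-norm,
\begin{equation*}
\|f_\lambda\|_{L^q_p(\omega)}^q = \frac{1}{2\pi}\int_{-\pi}^\pi F(\theta)^{q/p}\,d\theta, \qquad F(\theta) := \int_0^1 \frac{\omega(r)\,r\,dr}{|1-\rho re^{i\theta}|^{\beta p}},
\end{equation*}
the strategy is to bound $F(\theta)$ pointwise using the elementary identity $|1-\rho re^{i\theta}|^2 \asymp (1-\rho r)^2 + \rho r\theta^2$, valid for $|\theta|\leq\pi$, and then to integrate in $\theta$.

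The pointwise analysis proceeds by a two-regime splitting of the angular variable. In the central regime $|\theta|\leq 1-\rho$, one has $|1-\rho re^{i\theta}|\asymp 1-\rho r$ throughout the range of $r$, so the standard moment estimate
\begin{equation*}
\int_0^1 \omega(r)\,r\,(1-\rho r)^{-c}\,dr \asymp \omg(\lambda)(1-\rho)^{-c}
\end{equation*}
for $\omega\in\DD$ and $c$ large enough (which applies here since $\beta p>p/q+\beta_0>\beta_0$, after possibly enlarging $\beta_0$) gives $F(\theta)\lesssim \omg(\lambda)(1-\rho)^{-\beta p}$. This contributes $\omg(\lambda)^{q/p}(1-\rho)^{1-\beta q}$ after integration in $\theta$ over $|\theta|\leq 1-\rho$.

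In the tail regime $|\theta|>1-\rho$, the $r$-integral will be split at the critical value $r^*:=(1-|\theta|)/\rho$, where $1-\rho r^*=|\theta|$: on $[0,r^*]$ the Euclidean factor $(1-\rho r)$ still dominates, while on $[r^*,1]$ the angular factor $|\theta|$ does; both pieces can be evaluated by the same moment estimate to produce an upper bound of order $\omg(1-|\theta|)|\theta|^{-\beta p}$. Applying Lemma~\ref{Lem_D_hat}(iii) with exponent $\beta_0$ to compare $\omg(1-|\theta|)\leq C\bigl(|\theta|/(1-\rho)\bigr)^{\beta_0}\omg(\lambda)$ transforms the outer integral into
\begin{equation*}
\omg(\lambda)^{q/p}(1-\rho)^{-\beta_0 q/p}\int_{1-\rho}^\pi \theta^{(\beta_0 - \beta p)q/p}\,d\theta,
\end{equation*}
and the hypothesis $\beta>1/q+\beta_0/p$ is precisely what forces the exponent $(\beta_0-\beta p)q/p$ to be strictly below $-1$, so that this integral equals $\asymp (1-\rho)^{1-\beta q+\beta_0 q/p}$ and the tail contribution matches the desired order $\omg(\lambda)^{q/p}(1-\rho)^{1-\beta q}$.

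The main obstacle will be the delicate second regime: both the splitting of the $r$-integral at $r^*$ and the subsequent double use of the $\DD$-moment estimate require some care, and the threshold $\beta>1/q+\beta_0/p$ emerges from the convergence requirement of the tail $\theta$-integral, thereby confirming that the hypothesis is natural and sharp for this method.
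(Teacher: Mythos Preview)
Your proposal is correct and follows essentially the same route as the paper: reduce to $|\lambda|>\tfrac12$, split the angular integral at $|\theta|=1-|\lambda|$, bound the inner radial integral pointwise by $\widehat{\omega}(\lambda)(1-|\lambda|)^{-\beta p}$ in the central regime and by $\widehat{\omega}(1-|\theta|)|\theta|^{-\beta p}$ in the tail, then apply Lemma~\ref{Lem_D_hat}(iii) and the hypothesis $\beta>\tfrac1q+\tfrac{\beta_0}{p}$ to integrate the tail. The only cosmetic difference is that the paper carries out the inner radial estimate via a dyadic decomposition $r_n=1-2^{-n}$ rather than your continuous moment estimate; note also that your split of the tail $r$-integral at $r^*$ is not really needed, since $|1-\rho r e^{i\theta}|\asymp(1-r)+|\theta|$ throughout when $|\theta|>1-\rho$, so the single moment estimate with parameter $|\theta|$ handles the whole range at once.
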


\begin{proof}
It suffices to prove the estimate for $|\lambda|\in \left(\frac{1}{2},1\right)$. Write $r_n=1-2^{-n}$ for all $n\in\N\cup\{0\}$. Then 
Theorem~\ref{th:radialmaximal} and standard estimates imply
		\begin{equation}\begin{split}\label{eq:t11}
		\|f_{\lambda}\|_{L^q_p(\om)}^{q}
		&\lesssim\int_{0}^{1} \left( \int_{\frac{1}{2}}^{1} \frac{\omega(r)}{\left[ (1-r)+(1-|\lambda|)+ \theta\right]^{p\beta}} \, dr
  \right)^{\frac{q}{p}}   \, d \theta\\
   &\le\left(\int_{0}^{1-|\lambda|}+\int_{1-|\lambda|}^1\right)\left(\sum_{n=1}^{\infty} \frac{\widehat{\omega}(r_n)-\widehat{\omega}(r_{n+1})}{\left[ (1-r_{n+1})+(1-|\lambda|)+ \theta\right]^{p\beta}}\right)^{\frac{q}{p}}   \, d \theta\\
	&=I_1(\lambda)+I_2(\lambda),\quad \lambda\in\D.
	\end{split}\end{equation}
Let $N=N(\lambda)\in\mathbb{N}$ such that $r_N\le|\lambda|<r_{N+1}$, that is, $1-r_{N+1}=2^{-N-1}\le1-|\lambda|<2^{-N}=1-r_{N}$. Then
		\begin{equation*}
		\begin{split}
		I_1(\lambda)
		&\le(1-|\lambda|)\left(\left(\sum_{n=1}^{N}+\sum_{n=N+1}^{\infty}\right)
		\frac{\widehat{\omega}(r_n)-\widehat{\omega}(r_{n+1})}{\left[ (1-r_{n+1})+(1-|\lambda|)\right]^{p\beta}}\right)^{\frac{q}{p}}\\
		&\le(1-|\lambda|)\left(\sum_{n=1}^{N}\frac{\widehat{\omega}(r_n)-\widehat{\omega}(r_{n+1})}
		{\left(1-r_{n+1}\right)^{p\beta}}
		+\sum_{n=N+1}^{\infty}\frac{\widehat{\omega}(r_n)-\widehat{\omega}(r_{n+1})}{\left(1-|\lambda|\right)^{p\beta}}\right)^{\frac{q}{p}}\\
		&\le(1-|\lambda|)\left(\sum_{n=1}^{N}2^{(n+1)p\beta}\widehat{\omega}(r_n)
		+\frac{\widehat{\omega}(\lambda)}{\left(1-|\lambda|\right)^{p\beta}}\right)^{\frac{q}{p}},\quad\lambda\in\D.
	\end{split}
		\end{equation*}
An application of Lemma~\ref{Lem_D_hat}(iii) gives $\widehat{\omega}(r_n)\lesssim2^{(N-n)\beta_{0}}\widehat{\omega}(r_N)$ for all $1\le n \le N$, and hence
	\begin{align}\label{elefhgeofj}
	\sum_{n=1}^{N} 2^{(n+1)p\beta}\widehat{\omega}(r_n)
	&\lesssim2^{p\beta}2^{Np\beta}\widehat{\omega}(r_N)\sum_{n=1}^{N}2^{(p\beta-\beta_0)(n-N)}
	%\lesssim2^{Nq\beta}\widehat{\omega}(r_N)
	\lesssim\frac{\widehat{\omega}(\lambda)}{\left(1-|\lambda|\right)^{p\beta}},\quad \lambda\in\D,
	\end{align}
because $p\beta-\beta_0>0$. Thus 
	$$
	I_1(\lambda)\lesssim\widehat{\omega}(\lambda)^{\frac{q}{p}}(1-|\lambda|)^{1-q\beta},\quad \lambda \in \D,
	$$
that is, $I_1(\lambda)$ obeys the upper bound of the statement. 

It remains to deal with $I_2(\lambda)$. To do this, observe that, for $0<\theta<1$, there exists $M=M(\theta)\in\mathbb{N}\cup\{0\}$ such that $r_M \leq 1-\theta < r_{M+1}$, so $1-r_{M+1}=2^{-M-1} \leq \theta < 2^{-M}= 1-r_{M}$. Therefore an argument similar to that applied in \eqref{elefhgeofj} yields
		\begin{align*}
    I_2(\lambda)
		&\le\int_{1-|\lambda|}^{1}
		\left(\sum_{n=1}^{M}\frac{\widehat{\omega}(r_n)-\widehat{\omega}(r_{n+1})}{(1-r_{n+1})^{p\beta}}
		+\sum_{n=M+1}^{\infty}\frac{\widehat{\omega}(r_n)-\widehat{\omega}(r_{n+1})}{\theta^{p\beta}}
		\right)^{\frac{q}{p}}   \, d \theta \\
		&\lesssim\int_{1-|\lambda|}^{1} 
		\left(2^{Mp\beta}\widehat{\omega}(r_M)+\frac{\widehat{\om}(r_{M+1})}{\theta^{p\beta}}\right)^\frac{q}{p} \, d \theta
		\lesssim\int_{1-|\lambda|}^{1}\frac{\widehat{\omega}(1-\theta)^{\frac{q}{p}}}{\theta^{q\beta}}\, d \theta. 
	\end{align*}
Since $1-\theta <|\lambda|$, an application of Lemma~\ref{Lem_D_hat}(iii) yields $\widehat{\omega}(1-\theta)\lesssim \left( \frac{\theta}{1-|\lambda|}\right)^{\beta_{0}} \widehat{\omega}(\lambda)$. By using our choice $\beta>\frac{1}{q}+\frac{\beta_0}{p}$, we then obtain
	\begin{align*}
	I_2(\lambda)
	\lesssim\frac{\widehat{\omega}(\lambda)^{\frac{q}{p}}}{(1-|\lambda|)^{\frac{q}{p}\beta_0}} \int_{1-|\lambda|}^{1}\frac{d\theta}{\theta^{q \beta-\frac{q}{p}\beta_0}}
	\lesssim\widehat{\omega}(\lambda)^{\frac{q}{p}}(1-|\lambda|)^{1-q\beta},\quad \lambda \in \D,
	\end{align*}
and the proof is complete.
\end{proof}

For $g\in\H(\D)$ and $n,k\in\N\cup\{0\}$ such that $0\le k<n$, we define the operator $T_g^{n,k}$ by
		\begin{align*}
    T_g^{n,k}(f)=T_{I}^n \left(f^{(k)} g^{(n-k)}\right),
		\end{align*}
where $T^n_I=T_I\circ\cdots\circ T_I$ refers to the $n$ compositions of the integral operator induced by the identity mapping $I$.

\begin{proposition}\label{pr:Suf_Tgk_bound}
Let $0< p,q<\infty$, $n\in\N$ and $k\in\N\cup\{0\}$ such that $0\le k<n$, and $\omega\in \mathcal{D}$. If $g\in\mathcal{B}$, then $T_g^{n,k}: AT_p^q(\omega)\rightarrow AT_p^q(\omega)$ is bounded.
\end{proposition}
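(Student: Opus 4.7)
The plan is to reduce the boundedness of $T_g^{n,k}$ to an iterated application of the Littlewood-Paley characterization of $AT_p^q(\omega)$ (Theorem~\ref{thm:LPchar}) together with the standard pointwise estimate on derivatives of Bloch functions. Since $\omega\in\mathcal{D}$ is assumed, Theorem~\ref{thm:LPchar} applies for every order of derivative we will need.

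First, observe that the iterated integral $T_I^n$ is designed precisely so that $(T_I^n h)^{(n)}=h$ and $(T_I^n h)^{(j)}(0)=0$ for $0\le j\le n-1$. Applied to $h=f^{(k)}g^{(n-k)}$, this yields
\begin{equation*}
(T_g^{n,k}(f))^{(n)}=f^{(k)}g^{(n-k)}, \qquad (T_g^{n,k}(f))^{(j)}(0)=0,\quad 0\le j\le n-1.
\end{equation*}
Hence Theorem~\ref{thm:LPchar} applied at level $n$ gives
\begin{equation*}
\|T_g^{n,k}(f)\|_{AT_p^q(\om)}\asymp \bigl\|f^{(k)}g^{(n-k)}(1-|\cdot|)^{n}\bigr\|_{T_p^q(\om)}.
\end{equation*}

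Next, since $g\in\mathcal{B}$ and $n-k\ge 1$, the standard Bloch estimate yields the pointwise bound
\begin{equation*}
|g^{(n-k)}(z)|(1-|z|)^{n-k}\lesssim \|g\|_{\mathcal{B}},\quad z\in\D,
\end{equation*}
with implicit constant depending only on $n-k$. Inserting this into the quasinorm above produces
\begin{equation*}
\bigl\|f^{(k)}g^{(n-k)}(1-|\cdot|)^{n}\bigr\|_{T_p^q(\om)}\lesssim \|g\|_{\mathcal{B}}\bigl\|f^{(k)}(1-|\cdot|)^{k}\bigr\|_{T_p^q(\om)}.
\end{equation*}

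Finally, to close the estimate we distinguish the two cases. If $k=0$ the last quantity is simply $\|f\|_{AT_p^q(\om)}$. If $k\ge 1$ we invoke Theorem~\ref{thm:LPchar} a second time, now at level $k$, to get
\begin{equation*}
\bigl\|f^{(k)}(1-|\cdot|)^{k}\bigr\|_{T_p^q(\om)}\lesssim \|f\|_{AT_p^q(\om)},\quad f\in\H(\D).
\end{equation*}
Combining everything yields $\|T_g^{n,k}(f)\|_{AT_p^q(\om)}\lesssim \|g\|_{\mathcal{B}}\,\|f\|_{AT_p^q(\om)}$, which is the desired boundedness. In this argument there is no real obstacle: the crucial leverage is Theorem~\ref{thm:LPchar}, which is the heart of the matter and has already been established under the hypothesis $\omega\in\mathcal{D}$; everything else is a one-line Bloch estimate together with the vanishing of the relevant derivatives at the origin.
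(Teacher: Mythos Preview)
Your proof is correct and follows essentially the same route as the paper: apply the Littlewood-Paley characterization (which the paper states as Theorems~\ref{theo_LP_1} and~\ref{Thm:LP}, and you cite as Theorem~\ref{thm:LPchar}) at level $n$ to pass from $T_g^{n,k}(f)$ to $f^{(k)}g^{(n-k)}(1-|\cdot|)^n$, use the pointwise Bloch estimate to strip off $g^{(n-k)}(1-|\cdot|)^{n-k}$, and then apply the Littlewood-Paley estimate again at level $k$. Your explicit remark that $(T_I^n h)^{(j)}(0)=0$ for $0\le j\le n-1$ is a nice touch that the paper leaves implicit.
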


\begin{proof}
Theorems~\ref{theo_LP_1} and~\ref{Thm:LP} imply
\begin{align*}
\|T_g^{n,k}(f)\|_{AT_p^q(\omega)}
&\lesssim \|f^{(k)} g^{(n-k)} (1-|z|)^{n}\|_{T_p^q(\omega)}\lesssim \|g\|_{\mathcal{B}} \|f^{(k)}(1-|z|)^{k}\|_{T_p^q(\omega)}\\
&\lesssim \|g\|_{\mathcal{B}} \|f\|_{T_p^q(\omega)},
\end{align*}
and thus the assertion is proved.
\end{proof}

\begin{Prf}{\em{Theorem~\ref{thm:Tga_bound}.}}
If $g\in\mathcal{B}$, then 
$T_{g,a}=T^{{n,0}}_g+\sum_{k=1}^{n-1}a_kT_g^{n,k}$ is bounded by Proposition~\ref{pr:Suf_Tgk_bound}.

Conversely, assume that $T_{g,a}:AT_p^q(\omega)\rightarrow AT_p^q(\omega)$ is bounded. Let $\beta_0=\beta_0(\om)>0$ be that of Lemma~\ref{Lem_D_hat}(iii). Further, for $\lambda\in\D$ and $\gamma>\frac{1}{p}+\frac{\beta_0}{q}$, let
	\begin{align*}
	f_{\lambda}(z)
	=\frac{(1-|\lambda|^2)^{\gamma-\frac{1}{p}-\frac{1}{q}}}{(1-\overline{\lambda}z)^\gamma},\quad z\in\D.
	\end{align*}
Then Lemmas~\ref{Prop2.3} and~\ref{le:test} yield
	\begin{align*}
	|(T_{g,a}(f_{\lambda}))^{(n)}(\lambda)|
	\lesssim\frac{\|T_{g,a}(f_\lambda)\|_{T_p^q(\omega)}}{\widehat\omega(\lambda)^{\frac{1}{p}}(1-|\lambda|^2)^{n+\frac{1}{q}}}
	\lesssim\frac{\|T_{g,a}\|_{T_p^q(\omega)\to T_p^q(\omega)}}{(1-|\lambda|^2)^{n+\frac{1}{p}+\frac{1}{q}}}.
	\end{align*}
Hence there exists a constant $C=C(\gamma)>0$ such that 
\begin{align*}
\left|\sum_{k=0}^{n-1}  \frac{a_k \overline{\lambda}^{k} (\gamma)_k}{(1-|\lambda|^2)^{k+\frac{1}{p}+\frac{1}{q}}} g^{(n-k)}(\lambda) \right|  \leq C(\gamma) \frac{\|T_{g,a}\|_{T_p^q(\omega)\to T_p^q(\omega)}}{(1-|\lambda|^2)^{n+\frac{1}{p}+\frac{1}{q}}},
\end{align*}
where $(\gamma)_k=\gamma(\gamma+1)\cdots (\gamma+k-1)$, $k\ge 1$,  and $(\gamma)_0=a_0=1$. By re-arranging factors, it follows that
\begin{align*}
\left|\sum_{k=0}^{n-1}  a_k \overline{\lambda}^{k} (\gamma)_k (1-|\lambda|^2)^{n-k} g^{(n-k)}(\lambda) \right|  \leq C(\gamma) \|T_{g,a}\|_{T_p^q(\omega)\to T_p^q(\omega)},
\end{align*}
 and hence  $g\in\mathcal{B}$  by  \cite[Lemma~2.3]{Chalmoukis}.
\end{Prf}

\section{Bergman Projection}\label{S5}

In our proof of Theorem~\ref{BergmanProj1} we will first show that (iv)$\Rightarrow$(ii). With this aim let us observe that the condition $B_p(\gamma,\omega)<\infty$ implies $L^q_p(\omega)\subset L^1_\gamma=\{f: \int_{\D}|f|v_\gamma dA<\infty\}$. In particular, 
$P^+_{\gamma}(f)\in \H(\D)$ is well defined for each $f\in L^q_p(\omega)$. 

Throughout the proof of Theorem~\ref{BergmanProj1}, we will estimate  the kernel 
	$$
	K_\gamma(re^{i\theta},\rho e^{i\varphi})=\frac{2\rho(1-\rho^2)^\gamma}{|1-r\rho e^{i(\theta-\varphi)}|^{\gamma+2}}
	$$
from above by using a discrete positive kernel which is more convenient for our purposes. In order to do so,    
 we introduce some necessary terminology. 
Given a measure space $(X,\mathcal{M},\mu)$ and a   $\mu$-measurable function $M: X\times X\mapsto \C$, we consider the kernel integral operator
	\begin{align*}
	T_{M}(f)(z)=\int_{X}f(u)M(z,u) \, d\mu(u),\quad  z\in X,
	\end{align*}
whenever such integral is well defined.

From now on, with a little abuse of notation, $|\theta-\varphi|$ will denote the distance between $\theta$ and $\varphi$ in the quotient group $\R/2\pi \Z$, that is, $\min_{k\in \Z} |\theta-\varphi+2k\pi|.$ Since $L^q_p(\omega)\subset L^1_\gamma$ whenever $B_p(\gamma,\omega)<\infty$, to prove the boundedness of $P^+_\gamma$ on  $L^q_p(\omega)$, it is sufficient to establish the boundedness of $T_{{\tilde K}_\gamma}$, where 
	$$
	\tilde{K}_\gamma(re^{i\theta},\rho e^{i\varphi})
	=\rho(1-\rho)^{\gamma}\left|1-re^{i\theta}\overline{\rho e^{i\varphi}}\right|^{-2-\gamma}\chi_{\{|\theta-\varphi|\leq 1\}}
	\cdot\chi_{\{\min\{r,\rho\}> \frac{1}{2} \}}.
	$$ 
Next, observe that 
\begin{align*}
\tilde{K}_\gamma(re^{i\theta},\rho e^{i\varphi})
&= \rho\left((1-r\rho)^{2}+2r\rho(1- \cos(\theta-\varphi))\right)^{-1-\frac{\gamma}{2}}(1-\rho)^{\gamma}\chi_{\{|\theta-\varphi|\leq 1\}}\cdot\chi_{\{\min\{r,\rho\}> \frac{1}{2} \}}\\
&\lesssim \rho\left((1-r\rho)^{2}+r\rho|\theta-\varphi|^{2}\right)^{-1-\frac{\gamma}{2}}(1-\rho)^{\gamma}\chi_{\{|\theta-\varphi|\leq 1\}}\cdot\chi_{\{\min\{r,\rho\}> \frac{1}{2} \}} \lesssim D(\theta,\varphi,r,\rho),
\end{align*}
where 
\begin{align*}
D(\theta,\varphi,r,\rho)=
\begin{cases}
0, & \mbox{if} \quad |\theta-\varphi|> 1 \quad \text{or}\quad \max\{r,\rho\}\le \frac{1}{2},\\
\frac{\rho(1-\rho)^\gamma}{|\varphi-\theta|^{2+\gamma}}, & \mbox{if} \quad 1\geq |\theta-\varphi|\geq 1-r\rho\quad\text{and}\quad \min\{r,\rho\}> \frac{1}{2},  \\
\frac{\rho(1-\rho)^\gamma}{(1-r\rho)^{2+\gamma}}, & \mbox{if} \quad |\theta-\varphi|\leq 1-r\rho \quad\text{and}\quad \min\{r,\rho\}> \frac{1}{2}.\\
\end{cases}
\end{align*}
The change of variables $x=1-r$ and $y=1-\rho$ now yields 
	$$
	\frac{\tilde{H}(\theta,\varphi,x,y)}{2^{2+\gamma}}\leq D(\theta,\varphi,1-x,1-y)
	\le\tilde{H}(\theta,\varphi,x,y),\quad x,y\in[0,1],\quad \theta,\varphi \in [0,2\pi],
	$$ 
with
\begin{align*}
\tilde{H}(\theta,\varphi,x,y)=
\begin{cases}
0, & \mbox{if} \quad |\theta-\varphi|> 1\quad   \text{or}\quad   \min\{x,y\}\ge  \frac{1}{2} \\
\frac{y^{\gamma}(1-y)}{|\theta-\varphi|^{2+\gamma}}, & \mbox{if} \quad 1\geq |\theta-\varphi|\geq \max\{x,y\}  \quad\text{and}\quad  \max\{x,y\}<  \frac{1}{2}, \\
\frac{y^{\gamma}(1-y)}{(\max\{x,y\})^{2+\gamma}}, & \mbox{if} \quad \frac{1}{2}>\max\{x,y\}\geq |\theta-\varphi|,\\
\end{cases}
\end{align*}
because $\max\{x,y\}\leq 1-r\rho\leq 2\max\{x,y\}$. Consequently, in view of the above inequalities, in order to prove that the maximal Bergman projection $P^+_{\gamma}$  is bounded on $L^q_p(\omega)$, it is enough to prove that the operator $T_{\tilde{H}}$ defined on $[0,2\pi)\times(0,1)$ is bounded on 
 $L^{q}_p\left(\omega_L\,dx\,d\theta, [0,2\pi)\times(0,1)\right)$,
where $\omega_L(x)=(1-x)\omega(1-x)$.
In that case we have $\| P^+_{\gamma}\|\lesssim1+\| T_{\tilde{H}}\|$.
With this aim, let us define the sets
\begin{align*}
J_n:=\left\{(\theta,\varphi,x,y)\in  [0,2\pi)^{2}\times [0,1)^2: \max\{x,y\}\leq 2^{n}|\theta-\varphi|< 2\max\{x,y\}< 1\right\}, \quad n\in\N\setminus\{0\},
\end{align*}
and $J_0:=\left\{(\theta,\varphi,x,y): \max\{x,y\}\leq |\theta-\varphi|\leq 1, \, \max\{x,y\}< \frac{1}{2} \right\}$.
Notice that $J_n\cap J_m = \emptyset$ for $m\neq n$. A calculation shows that
\begin{equation}\label{eq:discretekernel}
\tilde{H}(\theta,\varphi,x,y)\asymp \frac{y^\gamma(1-y)}{|\theta-\varphi|^{\gamma+2}}\sum_{n\geq 0}\frac{\chi_{J_n}(\theta,\varphi,x,y)}{2^{n(\gamma+2)}}. 
\end{equation}

For a given radial weight $\nu$, we define $\dot{\nu}(s)=s\nu(s)$ and the maximal operator
$$W_{\nu}f(x)=\begin{cases}
	\sup_{1\geq t\geq x} \frac{ \int_{0}^{t} f(u) (1-u)\nu(1-u) du}{\widehat{\dot{\nu}}(1-t)}, \ &\mbox{ if } 0<x\le 1,\\
	0,\ &\mbox{ if } x> 1.
	\end{cases}$$

For each $\theta\in\mathbb{R}$ and a function $f$ defined on $[0,2\pi)\times[0,1)$, we denote $f_\theta(x)=f(\theta,x)$.

\begin{lemma}\label{PRineq}
Let $\om$ be a radial weight. Then
	\begin{align*}
	&\int_{0}^{2\pi}\hspace{-0.5em} \int_{0}^{1} (T_{\tilde H}f)(\theta,x) g(\theta,x) \omega_L(x)  dx\ d\theta
\\ & \lesssim \sum_{n\geq 0} \int_{0}^{2\pi}\int_{0}^{2\pi} \widetilde{\dot{\omega}}(1-2^n|\theta-\varphi|) W_{v_\gamma}f_\varphi( 2^{n}|\varphi-\theta|) W_{\omega}g_\theta( 2^{n}|\varphi-\theta|)\ d\theta\ d\varphi
	\end{align*}
for any pair $(f,g)$ of positive measurable functions on $[0,2\pi)\times[0,1)$.
\end{lemma}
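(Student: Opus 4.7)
The plan is to unfold the kernel using the discrete decomposition \eqref{eq:discretekernel}, swap summation and integration by positivity (Tonelli), and then estimate each dyadic piece by the maximal operators $W_\omega$ and $W_{v_\gamma}$. Writing $t_n=2^n|\theta-\varphi|$ for brevity, the decomposition reduces the left-hand side to bounding, for each $n\ge 0$, the quantity
	$$
	I_n \;=\; \frac{1}{2^{n(\gamma+2)}}\int_0^{2\pi}\!\!\int_0^{2\pi}\frac{1}{|\theta-\varphi|^{\gamma+2}}\left(\iint \chi_{J_n}(\theta,\varphi,x,y)\,y^{\gamma}(1-y)\,f(\varphi,y)\,g(\theta,x)\,\omega_L(x)\,dx\,dy\right)d\theta\,d\varphi.
	$$

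The next step is to observe that on $J_n$ both $x\le t_n$ and $y\le t_n$ hold, together with $t_n<1$. Dropping the lower bound on $\max\{x,y\}$ gives the pointwise majorant $\chi_{J_n}\le\chi_{\{x\le t_n\}}\chi_{\{y\le t_n\}}\chi_{\{t_n<1\}}$, which factorizes the inner double integral as
	$$
	\left(\int_0^{t_n} g_\theta(x)\omega_L(x)\,dx\right)\left(\int_0^{t_n} f_\varphi(y)\,y^{\gamma}(1-y)\,dy\right).
	$$
Since $\omega_L(x)=(1-x)\omega(1-x)$ and $y^{\gamma}(1-y)\asymp(1-y)v_\gamma(1-y)$ (up to a constant depending only on $\gamma$, because $(2-y)^{\gamma}\asymp 1$), the definitions of $W_\omega$ and $W_{v_\gamma}$ immediately yield
	$$
	\int_0^{t_n}\!g_\theta(x)\omega_L(x)\,dx\le \widehat{\dot{\omega}}(1-t_n)\,W_\omega g_\theta(t_n),\qquad \int_0^{t_n}\!f_\varphi(y)\,y^{\gamma}(1-y)\,dy\lesssim \widehat{\dot{v_\gamma}}(1-t_n)\,W_{v_\gamma}f_\varphi(t_n).
	$$

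The heart of the argument is then the algebraic identity that fuses the dyadic prefactor with the mass of $v_\gamma$: a direct computation gives $\widehat{\dot{v_\gamma}}(1-t_n)\asymp t_n^{\gamma+1}$, so that
	$$
	\frac{1}{2^{n(\gamma+2)}\,|\theta-\varphi|^{\gamma+2}}\,\cdot\,\widehat{\dot{v_\gamma}}(1-t_n)\,\cdot\,\widehat{\dot{\omega}}(1-t_n)\;\asymp\;\frac{\widehat{\dot{\omega}}(1-t_n)}{t_n}\;=\;\widetilde{\dot{\omega}}(1-t_n),
	$$
the last equality being the very definition $\widetilde{\dot\omega}(z)=\widehat{\dot\omega}(z)/(1-|z|)$ at $z=1-t_n$. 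Summing over $n\ge 0$ yields the right-hand side of the lemma.

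The main obstacle is bookkeeping across the cases $n=0$ and $n\ge 1$, since $J_0$ is defined slightly differently; this is handled by the uniform upper inclusion $\chi_{J_n}\le\chi_{\{x\le t_n,\,y\le t_n,\,t_n<1\}}$, which is valid for all $n\ge 0$. Beyond that, only routine positivity and Tonelli arguments are required, so no further delicate estimates are needed.
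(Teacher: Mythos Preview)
Your proof is correct and follows essentially the same route as the paper: expand the left-hand side via the dyadic kernel decomposition \eqref{eq:discretekernel}, enlarge each $J_n$ to the set $\{x\le t_n,\ y\le t_n,\ t_n\le 1\}$ (the paper calls this $A_n$), factor the resulting $x$- and $y$-integrals, and bound them by $W_\omega g_\theta(t_n)$ and $W_{v_\gamma}f_\varphi(t_n)$ using $\widehat{\dot{v_\gamma}}(1-t_n)\asymp t_n^{\gamma+1}$ to absorb the prefactor into $\widetilde{\dot\omega}(1-t_n)$. The only cosmetic differences are your explicit use of the shorthand $t_n$ and the intermediate appearance of $\widehat{\dot{v_\gamma}}$, where the paper directly writes $2^{n(\gamma+1)}|\theta-\varphi|^{\gamma+1}$; also note that for $n=0$ the correct inclusion is $t_0\le 1$ rather than $t_0<1$, but this is a measure-zero discrepancy.
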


\begin{proof}
By using the definition of the kernel $\tilde{H}$, \eqref{eq:discretekernel} and  grouping terms, it follows that
	\begin{align*}
	&\int_{0}^{2\pi}\hspace{-0.5em} \int_{0}^{1} T_{\tilde{H}}f(\theta,x)\ g(\theta,x) \omega_L(x) dx\ d\theta\\
	&=\int_{0}^{2\pi} \hspace{-0.5em}\int_{0}^{2\pi} \hspace{-0.5em}\int_{0}^{1}\hspace{-0.5em}\int_{0}^{1} \tilde H(\theta,\varphi,x,y) f(\varphi,y) g(\theta,x)\omega_L(x) dx\ dy\ \frac{d\varphi}{2\pi}\ d\theta\\
	&\lesssim\sum_{n\geq 0} 2^{-n(\gamma+2)}\iiiint_{A_n } \frac{y^\gamma(1-y)}{|\theta-\varphi|^{\gamma+2}} f(\varphi,y) g(\theta,x)\  \omega_L(x) dx\ dy\ d\varphi\ d\theta,
\end{align*}
where $A_n=\{(\theta,\varphi,x,y)\in  [0,2\pi)^{2}\times [0,1)^2: 0\le x\leq  2^n|\theta-\varphi|\leq 1\,\text {and}\,  0\le y\leq  2^n|\theta-\varphi|\leq 1  \}$.
Consequently,
%bearing in mind Lemma~\ref{Lem_D}(iii)
\begin{align*}
	&\int_{0}^{2\pi}\hspace{-0.5em} \int_{0}^{1} T_{\tilde{H}}f(\theta,x)\ g(\theta,x)\ \omega_L(x) dx\ d\theta\\
	&\lesssim \sum_{n\geq 0}\iint_{|\varphi-\theta|\leq 2^{-n}} \widetilde{\dot{\omega}} (1-2^{n}|\theta-\varphi|)\left(\frac{1}{2^{n(\gamma+1)}|\theta-\varphi|^{\gamma+1}}\int_{0}^{2^{n}|\theta-\varphi|} f(\varphi,y)\ y^{\gamma}(1-y) dy\right)\\
	& \times\left(\frac{1}{\widehat{\dot{\omega}}(1-2^n|\theta-\varphi|)}\int_{0}^{2^n|\theta-\varphi|} g(\theta,x) \omega_L(x) dx\right)\ d\theta\ d\varphi\\
	&\lesssim \sum_{n\geq 0}\int_{0}^{2\pi}\int_{0}^{2\pi} \widetilde{\dot{\omega}}(1-2^n|\theta-\varphi|)  W_{v_\gamma}f_\varphi( 2^{n}|\varphi-\theta|) W_{\omega}g_\theta( 2^{n}|\varphi-\theta|)\, d\theta\ d\varphi,
	\end{align*}
and we are done.
\end{proof}

For each $a\in\D$, let $I_a=\left\{e^{i\theta}:|\arg(a e^{-i\theta})|\le \frac{(1-|a|)}{2}\right\}$. The Carleson square induced by $a\in\D$ is the set 
	$$
	S(a)=\{z\in\D: |z|\ge |a|,\, e^{it}\in I_a\}.
	$$
Further, for each weight $\omega$ and $\varphi\in L^1_\om$, the H\"ormander-type maximal function is 
	$$
  M_{\om}(\varphi)(z)=\sup_{z\in S}\frac{1}{\om\left(S\right)}\int_{S}|\varphi(\z)|\om(\z)\,dA(\z).
	$$

\begin{lemma}\label{HLgamma}
Let $-1<\gamma<\infty$, $1<p<\infty$ and let $\omega$ be radial weight such that $B_p(\gamma,\omega)<\infty$. Then $W_{v_\gamma}$ is a bounded operator on $L^{p}(\omega_L, (0,1])$.
\end{lemma}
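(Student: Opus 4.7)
The plan is to reduce the one-variable weighted maximal inequality for $W_{v_\gamma}$ on $L^p(\omega_L,(0,1])$ to the two-dimensional Bekoll\'e--Bonami characterization of the boundedness of the H\"ormander-type maximal function $M_{v_\gamma}$ on $L^p_\omega$ quoted in the introduction from \cite{PottRegueraJFA,BB}. The hypothesis $B_p(\gamma,\omega)<\infty$ is precisely the one making that result applicable, so the task is to identify $W_{v_\gamma}$ as the radial restriction of $M_{v_\gamma}$ under a suitable change of variables.

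First I would perform the substitution $u=1-s$ inside the integral defining $W_{v_\gamma}f(x)$, and set $\rho=1-x$ and $F(s)=f(1-s)$. This rewrites
\begin{equation*}
W_{v_\gamma}f(1-\rho)=\sup_{0\le r\le\rho}\frac{\int_r^1 F(s)\,sv_\gamma(s)\,ds}{\int_r^1 sv_\gamma(s)\,ds}=:VF(\rho),\quad 0<\rho\le 1,
\end{equation*}
and turns the weight into $\omega_L(1-\rho)=\rho\omega(\rho)$. Next, for the radial extension $\tilde F(\zeta)=F(|\zeta|)$ on $\D$, the fact that $z=\rho e^{i\theta}$ belongs to a Carleson square $S(a)$ only if $|a|\le\rho$, together with the elementary size estimates
\begin{equation*}
\int_{S(a)}\tilde F\,v_\gamma\,dA\asymp(1-|a|)\int_{|a|}^1 F(s)\,sv_\gamma(s)\,ds,\qquad v_\gamma(S(a))\asymp(1-|a|)\int_{|a|}^1 sv_\gamma(s)\,ds,
\end{equation*}
forces the averages entering $M_{v_\gamma}(\tilde F)(\rho e^{i\theta})$ to be of the form appearing in $VF(\rho)$. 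Taking the supremum over admissible $a$ yields the pointwise comparison $M_{v_\gamma}(\tilde F)(\rho e^{i\theta})\asymp VF(\rho)$, uniformly in $\theta$.

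Integrating in polar coordinates and using the definition of $\omega_L$ I would then obtain
\begin{equation*}
\int_0^1(W_{v_\gamma}f(x))^p\omega_L(x)\,dx\asymp\int_{\D}(M_{v_\gamma}\tilde F)^p\omega\,dA,\qquad \int_0^1|f(x)|^p\omega_L(x)\,dx\asymp\int_{\D}|\tilde F|^p\omega\,dA,
\end{equation*}
so the cited boundedness of $M_{v_\gamma}$ on $L^p_\omega$ under $B_p(\gamma,\omega)<\infty$ closes the argument. The only delicate step is the pointwise identification $M_{v_\gamma}\tilde F\asymp VF$ on radial functions; this rests on the routine computation of $v_\gamma$-averages over Carleson squares recalled above, combined with the observation that for radial $\tilde F$ the angular position of $a$ plays no role in the averages.
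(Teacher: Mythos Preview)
Your proposal is correct and follows essentially the same route as the paper's own proof: both identify $W_{v_\gamma}$, after the change of variable $x\mapsto 1-\rho$, with the restriction of the H\"ormander-type maximal function $M_{v_\gamma}$ to radial inputs, and then invoke the Bekoll\'e--Bonami / Pott--Reguera result \cite[(4.7)]{PottRegueraJFA} for the boundedness of $M_{v_\gamma}$ on $L^p_\omega$ under $B_p(\gamma,\omega)<\infty$. Your write-up simply spells out the substitution and the Carleson-square averages that the paper compresses into the single line ``a straightforward calculation shows that $M_{\eta}(\varphi)(1-|z|)=W_\eta(\phi)(|z|)$''.
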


\begin{proof}
For a radial weight $\eta$ and a radial function $\varphi$, a straightforward calculation shows that
	$$
	M_{\eta}(\varphi)(1-|z|)=W_\eta(\phi)(|z|), \quad z\in\D\setminus\{0\},
	$$
where $\varphi(|z|)=\phi(1-|z|)$. Therefore, it is enough to prove that the  H\"ormander-type maximal function 
$M_{v_\gamma}$ is bounded on $L^p(\omega,\D)$. But this immediately follows from \cite[(4.7)]{PottRegueraJFA}.
\end{proof}

\begin{lemma}\label{HLw}
Let $1<p<\infty$ and $\omega\in\DD$. Then $W_{\omega}$ is a bounded operator on $L^{p}(\omega_L, (0,1])$.
\end{lemma}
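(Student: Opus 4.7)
The plan is to reduce the statement to the boundedness of the H\"ormander-type maximal function $M_\omega$ on $L^p_\omega(\D)$, which holds for every $\omega\in\widehat{\mathcal{D}}$ by \cite[Theorem~3.4]{PelSum14} as recalled in the introduction. This mirrors exactly what was done in the proof of Lemma~\ref{HLgamma}, with $v_\gamma$ replaced by $\omega$.

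First, for any radial weight $\eta$ and any radial function $\varphi$ on $\D$ written as $\varphi(|z|)=\phi(1-|z|)$, I would verify the identity
$$M_\eta(\varphi)(z)=W_\eta(\phi)(1-|z|),\quad z\in\D\setminus\{0\}.$$
This is an explicit calculation in polar coordinates inside each Carleson square: for $a\in\D$ with $z\in S(a)$ one has $|a|\le|z|$, and
$$\int_{S(a)}\varphi\,\eta\,dA=\frac{|I_a|}{\pi}\int_{|a|}^{1}\phi(1-r)\eta(r)r\,dr,\qquad \eta(S(a))=\frac{|I_a|}{\pi}\widehat{\dot{\eta}}(|a|).$$
The change of variables $u=1-r$ and $t=1-|a|$ then turns $M_\eta(\varphi)(z)$ into the supremum over $t\ge 1-|z|$ defining $W_\eta(\phi)(1-|z|)$.

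Next, the same substitution gives the norm identities
$$\|\varphi\|_{L^p_\omega(\D)}^{p}=2\|\phi\|_{L^p(\omega_L,(0,1])}^{p},\qquad \|M_\omega\varphi\|_{L^p_\omega(\D)}^{p}=2\|W_\omega\phi\|_{L^p(\omega_L,(0,1])}^{p},$$
where $\omega_L(u)=(1-u)\omega(1-u)$ arises precisely as the Jacobian times the weight. Consequently, boundedness of $W_\omega$ on $L^p(\omega_L,(0,1])$ is equivalent to boundedness of $M_\omega$ on the closed subspace of radial functions in $L^p_\omega(\D)$. Since $\omega\in\widehat{\mathcal{D}}$ implies $M_\omega:L^p_\omega(\D)\to L^p_\omega(\D)$ is bounded by \cite[Theorem~3.4]{PelSum14}, the conclusion follows. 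No step here is subtle; the only thing to be careful about is the precise normalization in the definition of $\widehat{\dot{\omega}}$ and $\omega_L$ so that the two changes of variable match up and produce the same weight on both sides of the reduction.
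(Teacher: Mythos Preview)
Your proposal is correct and follows essentially the same approach as the paper: reduce $W_\omega$ to the H\"ormander-type maximal function $M_\omega$ via the radial change of variables (exactly as in the proof of Lemma~\ref{HLgamma}), and then invoke \cite[Theorem~3.4]{PelSum14} for the boundedness of $M_\omega$ on $L^p_\omega$ when $\omega\in\widehat{\mathcal{D}}$. You simply spell out in more detail the identity $M_\eta(\varphi)(z)=W_\eta(\phi)(1-|z|)$ and the matching norm computations that the paper leaves implicit.
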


\begin{proof}
Since  $\omega\in\DD$, the H\"ormander-type maximal function $M_{\omega}$ is bounded on $L^p(\omega,\D)$ by \cite[Theorem~3.4]{PelSum14}. Therefore
by arguing as in the proof of Lemma~\ref{HLgamma}, the assertion follows.
\end{proof}

\begin{proof}[Proof of Theorem~\ref{BergmanProj1}]
We will show that (iv)$\Rightarrow$(ii)$\Rightarrow$(i)$\Rightarrow$(iii)$\Leftrightarrow$(iv), (i)$\Leftrightarrow$(v) and (iv)$\Rightarrow$(vi)$\Rightarrow$(vii)$\Rightarrow$(iii).

Assume first (iv). By using \cite[Theorem 1]{Benedek_Panzone} and our previous considerations it is enough to prove that
	\begin{equation}\label{eq:proj1}
	\sup\int_{0}^{2\pi}\hspace{-0.5em} \int_{0}^{1} (T_{\tilde H}|f|)(\theta,x)\ |g|(\theta,x) \omega_L (x) dx\ d\theta<\infty,
	\end{equation}
where the supremum is taken over all the pairs of functions $(f,g)$ such that 
	$$
	\|f\|_{ L^q_p \left(\omega_L \,dx\,d\theta, [0,2\pi)\times(0,1)\right) }
	\le1\quad\text{ and}\quad \|g\|_{ L^{q'}_{p'} \left(\omega_L \,dx\,d\theta, [0,2\pi)\times(0,1)\right)}\le1.
	$$
For such $f$ and $g$, write $F=W_{v_\gamma}|f|$ and $G=W_{\omega}|g|$. Fix $K>1$, and let $\{\rho_n\}$ be the sequence defined in \eqref{eq:rhon} in terms of $\omega$ and $K$. Consider the sequences of functions $f_{n}(\varphi)=F(\varphi,1-\rho_n)$ and $g_{n}(\varphi)=G(\varphi,1-\rho_n)$ for $\varphi\in\mathbb{T}$ and $n\in \N\cup\{0\}$. Notice that for all $x\in I_{n}=[1-\rho_{n},1-\rho_{n-1})$ we have $f_{n-1}(\varphi)\leq F(\varphi,x)\leq f_{n}(\varphi)$ and $g_{n-1}(\varphi)\leq G(\varphi,x)\leq g_{n}(\varphi)$. Hence
	\begin{align*}
	\sum_{n=1}^{\infty} f_{n-1}(\varphi) \chi_{I_n}(x)&\leq F(\varphi,x)\leq \sum_{n=1}^{\infty} f_{n}(\varphi) \chi_{I_n}(x),\\
	\sum_{n=1}^{\infty} g_{n-1}(\varphi) \chi_{I_n}(x)&\leq G(\varphi,x)\leq \sum_{n=1}^{\infty} g_{n}(\varphi) \chi_{I_n}(x).
	\end{align*}
Then, by applying Lemma~\ref{HLgamma} in the inner integral to each function $f_\varphi$, we obtain 
	\begin{equation}
	\begin{split}\label{bsequ}
	\int_{0}^{2\pi} \left(\sum_{n=1}^{\infty} f_{n-1}^{p}(\varphi)\ K^{-n}\right)^{\frac{q}{p}}\ d\varphi
	&\lesssim \int_{0}^{2\pi}\left(\int_{0}^{1}|F(\varphi,x)|^{p}\omega_L(x)\,dx\right)^{\frac{q}{p}}\,d\varphi \\
	&\lesssim \int_{0}^{2\pi}\left(\int_{0}^{1}|f(\varphi,x)|^{p}\omega_L(x)\,dx\right)^{\frac{q}{p}}\,d\varphi<\infty.
	\end{split}
	\end{equation}
By \cite[Lemma~9(i)]{PRR23}, Lemma~\ref{HLw}, and repeating the argument above we get 
	\begin{align}\label{bsequ1}
	\int_{0}^{2\pi} \left(\sum_{n=0}^{\infty}g_{n}^{p'}(\varphi) K^{-n}\right)^{\frac{q'}{p'}}\ d\varphi < \infty.
	\end{align}
Let us prove next that the hypothesis $$B_p(\gamma,\omega)=\sup\limits_{0\le r<1}\frac{\left(\int_r^1 \omega(t)t\,dt\right)^\frac1p\left(\int_r^1 \sigma(t)t\,dt\right)^\frac1{p'}}{\int_r^1  v_{\gamma}(t)t\,dt}<\infty$$ implies $\omega\in\Dd$. Since $v_{\gamma}\in\Dd$, there exists $K>1$ and $C>0$ such that 
	$$
	\int_r^1  v_{\gamma}(t)t\,dt\le C \int_r^{1-\frac{1-r}{K}}  v_{\gamma}(t)t\,dt,\quad 0\le r<1.
	$$
Therefore
	\begin{equation*}
	\begin{split}
	\left(\int_r^1 \omega(t)t\,dt\right)^\frac1p\left(\int_r^{1-\frac{1-r}{K}}\sigma(t)t\,dt\right)^\frac1{p'} & \le 
	\left(\int_r^1 \omega(t)t\,dt\right)^\frac1p\left(\int_r^1 \sigma(t)t\,dt\right)^\frac1{p'}\\
	&\le B_p(\gamma,\omega)\int_r^1  v_{\gamma}(t)t\,dt\\
	&\le C  B_p(\gamma,\omega)\int_r^{1-\frac{1-r}{K}}  v_{\gamma}(t)t\,dt\\
	&\le C  B_p(\gamma,\omega)\left(\int_r^{1-\frac{1-r}{K}} \omega(t)t\,dt\right)^\frac1p\left(\int_r^{1-\frac{1-r}{K}} \sigma(t)t\,dt\right)^\frac1{p'}
	\end{split}
	\end{equation*}
for all $0\le r<1$, that is, $\int_r^1 \omega(t)t\,dt \lesssim \int_r^{1-\frac{1-r}{K}} \omega(t)t\,dt$ for all $0\le r<1$.
It follows that $\om\in\Dd$.

Next, by Lemmas~\ref{PRineq} and~\ref{Lem_D_check}(ii), we have 
	\begin{align*}
	&\int_{0}^{2\pi} \hspace{-0.5em} \int_{0}^{1} T_{\tilde H}|f|(\theta,x) |g|(\theta,x)\ \omega_L(x)\ dx\ d\theta
\\ &\lesssim \sum_{j\geq 0} \int_{0}^{2\pi} \hspace{-0.5em}\int_{0}^{2\pi}  \widetilde{\dot\omega}(1-2^j|\theta-\varphi|) F(\theta, 2^j|\varphi-\theta|) G(\varphi, 2^j|\varphi-\theta|)\ d\theta\ d\varphi\\
	&\lesssim \sum_{j\geq 0}\int_{0}^{2\pi} \left(\sum_{n=1}^{\infty} \frac{K^{-n}}{1-\rho_{n}} f_{n}(\theta) \int_{0}^{2\pi} g_{n}(\varphi) \chi_{I_n}(2^j|\theta-\varphi|)\ d\varphi\right)d\theta\\
	&\lesssim \sum_{j\geq 0}\int_{0}^{2\pi} \sum_{n=1}^{\infty} f_{n}(\theta)\ 2^{-j} K^{-n}\left( \frac{1}{2^{1-j}(1-\rho_{n-1})}\int_{\theta-2^{-j}(1-\rho_{n-1})}^{\theta+2^{-j}(1-\rho_{n-1})} g_{n}(\varphi)\ d\varphi\right)\ d\theta\\
	&\ \lesssim \sum_{j\geq 0} 2^{-j} \int_{0}^{2\pi} \sum_{n=1}^{\infty} f_{n}(\theta)\ K^{-n} Mg_{n}(\theta)\ d\theta,
	\end{align*}
where the constants involved in the above inequality depend on $\omega$ and $K$. By Hölder's inequality, the Fefferman-Stein vectorial maximal theorem \cite[Theorem~1, p.~107]{FefSt} and the inequalities \eqref{bsequ} and \eqref{bsequ1} we deduce
	\begin{align*}
	&\int_{0}^{2\pi} \sum_{n=1}^{\infty} f_{n}(\theta)\ K^{-n} Mg_{n}(\theta)\ d\theta\\
	&\quad\lesssim  \int_{0}^{2\pi} \left(\sum_{n=1}^{\infty} f_{n}^{p}(\theta)\ K^{-n}\right)^{1/p}\left(\sum_{n=1}^{\infty} (Mg_{n})^{p'}(\theta)\ K^{-n}\right)^{1/p'}\ d\theta\\
	&\quad\leq \left(\int_{0}^{2\pi }\left(\sum_{n=1}^{\infty} f_{n}^{p}(\theta)\ K^{-n}\right)^{q/p}\ d\theta\right)^{1/q}\left(\int_{0}^{2\pi }\left(\sum_{n=1}^{\infty} (Mg_{n})^{p'}(\theta)\ K^{-n}\right)^{q'/p'}\ d\theta\right)^{1/q'}\\
    &\quad\lesssim  \left(\int_{0}^{2\pi }\left(\sum_{n=1}^{\infty} g_{n}^{p'}(\theta)\ K^{-n}\right)^{q'/p'}d\theta\right)^{1/q'}
     <\infty.
	\end{align*}
Therefore \eqref{eq:proj1} holds, and this finishes the proof of (iv)$\Rightarrow$(ii). 

It is clear that (ii)$\Rightarrow$(i), and by mimicking the proof of \cite[Proposition~8(i)]{PRR23} we get (i)$\Rightarrow$(iii). Further, (iii)$\Leftrightarrow$(iv) follows by \cite[Lemma~9(ii)]{PRR23}. Therefore we have proved that the first four conditions are equivalent.

Now we will prove (i)$\Leftrightarrow$(v). The proof is standard, but we provide the details for the convenience of the readers.
Assume (i). For each $g\in AL^{q'}_{p'}(\sigma)$, consider the linear functional $L_g(f)=\int_{\D}f\bar{g}\, v_\gamma\,dA$. Two applications of H\"older's inequality yield $\|L_g\|_{(AL^q_p(\omega))^\star }\le \|g\|_{AL^{q'}_{p'}(\sigma)}$. Take $L\in (AL^q_p(\omega))^\star$. By the Hanh-Banach theorem $L$ can be extended to a bounded linear functional $\widetilde{L}$ on $L^q_p(\omega)$ such that $\|L\|_{(AL^q_p(\omega))^\star }=\|\widetilde L\|_{(L^q_p(\omega))^\star }$. Now, by \cite[Theorem~1]{Benedek_Panzone} and a straightforward calculation there exists $h\in L^{q'}_{p'}(\sigma)$ such that $\widetilde L=L_h$ and $\|\widetilde L\|_{(L^q_p(\omega))^\star }= \| h \|_{L^{q'}_{p'}(\sigma)}$.
Moreover, since $P_\gamma$ is bounded on $L^q_p(\omega)$, by the symmetry of the condition $B_p(\gamma,\omega)<\infty$, $P_\gamma$ and $P^+_\gamma$ are both bounded on $L^{q'}_{p'}(\sigma)$. So, by Fubini's theorem,
	$$
	L(f)=\widetilde{L}(f)=\langle f, h\rangle_{L^2_\gamma}=\langle P_\gamma(f), h\rangle_{L^2_\gamma}=\langle f, g\rangle_{A^2_\gamma},\quad f\in AL^q_p(\omega),
	$$
where $g=P_\gamma(h)$ and $\|g\|_{L^{q'}_{p'}(\sigma)}=\|P_\gamma(h)\|_{L^{q'}_{p'}(\sigma)}\le \|P_\gamma\| \|h\|_{L^{q'}_{p'}(\sigma)}= \|P_\gamma\|
\|L\|_{(AL^q_p(\omega))^\star }$. Therefore $(AL^q_p(\omega))^\star\simeq AL^{q'}_{p'}(\sigma)$, with equivalence of norms, via the $A^2_\gamma$-pairing.

Conversely, assume (v). Take $h \in L^{q'}_{p'}(\sigma)$ and consider the bounded linear functional $L_h(f)=\langle f, h\rangle_{L^2_\gamma}$ on  
$L^q_p(\omega)$ such that $\| L_h\|_{(AL^q_p(\omega))^\star}\le \|h\|_{L^{q'}_{p'}(\sigma)}$. By Fubini's theorem,
$L_h(f)=\langle f,  P_\gamma(h)\rangle_{A^2_\gamma}$ for each polynomial $f$. Further, by the hypothesis, there exists
$g\in AL^{q'}_{p'}(\sigma)$ such that $L_h(f)=\langle f,  g\rangle_{A^2_\gamma}$ for all $f\in AL^q_p(\omega)$ and $\| L_h\|_{(AL^q_p(\omega))^\star } \simeq \|g\|_{AL^{q'}_{p'}(\sigma)}$. 
Now, by testing with the monomials $\{z^n\}_{n\in\N\cup\{0\}}$ we get $g=P_\gamma(h)$. Therefore $\|P_\gamma(h)\|_{L^{q'}_{p'}(\sigma)}\lesssim \|h\|_{L^{q'}_{p'}(\sigma)}$, that is, 
$P_\gamma$ is bounded on $L^{q'}_{p'}(\sigma)$, and thus $P_\gamma$ is bounded on $L^q_p(\omega)$. Therefore (i) is verified.

Next, assume (iv). Then $\om,\sigma\in\DD$ by \cite[Lemma 9(i)]{PRR23}, and hence $AT^q_p(\omega)=  AL^q_p(\omega)$  and  $AT^{q'}_{p'}(\sigma)=AL^{q'}_{p'}(\sigma)$ by Theorem~\ref{th:samespaceintro 1}. Therefore (vi) follows from (v). Conversely, if (vi) holds, the same  argument as above implies (vii). Finally, assume (vii). Then,  by mimicking the proof of \cite[Proposition~8(i)]{PRR23}, we get (iii). With this guidance we consider the theorem proved.
\end{proof}

\end{document}